\newcommand{\C}{\mathbb{C}}
\newcommand{\N}{\mathbb{N}}
\newcommand{\R}{\mathbb{R}}
\newcommand{\Z}{\mathbb{Z}}
\newcommand{\Spec}{\text{Spec}}
\newcommand{\abracket}[1]{\left\langle#1\right\rangle}
\newcommand{\pa}{\partial}
\newcommand{\into}{\hookrightarrow}
\newcommand{\iso}{\cong}
\newcommand{\ST}{\mathscr{S}}
\newcommand{\XT}{\mathscr{X}}
\newcommand{\RT}{\mathscr{R}}
\newcommand{\fvol}{\mathbb{1}}
\newtheorem{theorem}{Theorem}[section]
\newtheorem{proposition}[theorem]{Proposition}
\newtheorem{lemma}[theorem]{Lemma}
\newtheorem{remark}[theorem]{Remark}
\newtheorem{definition}[theorem]{Definition}
\newtheorem{corollary}[theorem]{Corollary}
\newcommand{\A}{\mathcal{A}}
\newcommand{\cO}{\mathcal{O}}
\newcommand{\pat}{\partial}
\newcommand{\td}{\text{d}}
\DeclareMathOperator{\PV}{PV}
\DeclareMathOperator{\KK}{K}
\DeclareMathOperator{\Res}{Res}
\DeclareMathOperator{\Tr}{Tr}
\def\Xint#1{\mathchoice
{\XXint\displaystyle\textstyle{#1}}%
{\XXint\textstyle\scriptstyle{#1}}%
{\XXint\scriptstyle\scriptscriptstyle{#1}}%
{\XXint\scriptscriptstyle\scriptscriptstyle{#1}}%
\!\int}
\def\XXint#1#2#3{{\setbox0=\hbox{$#1{#2#3}{\int}$}
\vcenter{\hbox{$#2#3$}}\kern-.5\wd0}}
\def\dashint{\Xint-}
\begin{document}

\title[Primitive forms from log LG toric mirrors]{A perturbative construction of primitive forms from log Landau-Ginzburg mirrors of toric manifolds}

\author[Chan]{Kwokwai Chan}
\address{Department of Mathematics\\ The Chinese University of Hong Kong\\ Shatin\\ Hong Kong}
\email{kwchan@math.cuhk.edu.hk}
 
\author[Ma]{Ziming Nikolas Ma}
\address{Department of Mathematics\\ Southern University of Science and Technology\\ Shenzhen\\ China}
\email{mazm@sustech.edu.cn}

\author[Wen]{Hao Wen}
\address{School of Mathematical Sciences and the Key Laboratory of Pure Mathematics
	and Combinatorics\\ Nankai University\\ Tianjin\\ China}
\email{wenhao@nankai.edu.cn}

\begin{abstract}
    We introduce the notion of a logarithmic Landau-Ginzburg (log LG) model, which is essentially given by equipping the central degenerate fiber of the family of Landau-Ginzburg (LG) models mirror to a projective toric manifold with a natural log structure. 
    We show that the state space of the mirror log LG model is naturally isomorphic to that of the original toric manifold. 
    Following \cite{LLS, LLSS}, we give a perturbative construction of primitive forms by studying the deformation theory of such a log LG model, which involves both smoothing of the central degenerate fiber and unfolding of the superpotential.
    This yields a logarithmic Frobenius manifold structure on the base space of the universal unfolding.
    The primitive forms and flat coordinates we obtained are computable and closely related to the bulk-deformed Lagrangian Floer superpotential of a projective toric manifold, at least in the semi-Fano case.
\end{abstract}

\maketitle

\tableofcontents

\section{Introduction}

\subsection*{Background}
The long history of mirror symmetry for toric manifolds can be traced back to the works of Batyrev \cite{Batyrev93}, Givental \cite{Givental98}, Lian-Liu-Yau \cite{LLY-III}, Kontsevich \cite{Kontsevich-ENS98}, Hori-Vafa \cite{Hori-Vafa00}, and many others.
Given an $n$-dimensional projective toric manifold $Y$ defined by a complete smooth fan $\Sigma \subset \R^n$, its mirror is generally agreed to be given by a {\em Landau-Ginzburg (abbrev. LG) model} $(X_1, f)$, where $X_1$ is a smooth affine variety isomorphic to the algebraic torus $(\C^*)^n$ and the so-called {\em superpotential} $f: X_1 \to \C$ is a Laurent polynomial whose Newton polytope is the fan polytope of $\Sigma$.
As an example, the LG mirror of $Y = \mathbb{P}^2$ is given by $ X_1 = \{(z_1, z_2, z_3) \in \C^3: z_1 z_2 z_3 = 1 \}\cong (\C^*)^2$ together with the restriction $f$ of the function $z_1 + z_2 + z_3$ to $X_1 \subset \C^3$.

In genus zero, mirror symmetry can be understood as an isomorphism between \emph{Frobenius manifolds}.
The classical work of K. Saito \cite{Saito83} first introduced \emph{primitive forms} and used them to construct flat structures associated to universal unfoldings of isolated hypersurface singularities.
Dubrovin \cite{Dubrovin93} generalized and unified this with the WDVV equations in genus zero Gromov-Witten (GW) theory by introducing the notion of Frobenius manifolds.
We call the Frobenius manifold coming from genus zero GW theory (or big quantum cohomology) the \emph{A-model Frobenius manifold}.
Douai-Sabbah \cite{douai2003gauss, douai2004gauss} extended Saito's work to a broad class of examples, enabling the construction of the \emph{B-model Frobenius manifold} from the LG mirror $(X_1, f)$.
Genus zero toric mirror symmetry can then be phrased as an isomorphism (possibly via a nontrivial mirror map) between this B-model Frobenius manifold and the A-model Frobenius manifold associated to $Y$.
In \cite{barannikov2000semi}, Barannikov established such an isomorphism in the case of projective spaces (using his own construction of the B-model Frobenius manifold).\footnote{Barannikov's construction was based on an earlier construction by Barannikov-Kontsevich \cite{BK98} which produced a Frobenius manifold structure on the extended moduli space of a Calabi-Yau manifold via polyvector fields; see also Li-Wen \cite{LW22} where they gave a construction of Frobenius manifolds via the Barannikov-Kontsevich approach unifying the Landau-Ginzburg and Calabi-Yau geometry.}

Subsequently, Iritani \cite{Iritani08, Iritani17} and Coates-Corti-Iritani-Tseng \cite{CCIT20} investigated mirror symmetry for toric manifolds (and more generally, toric Deligne-Mumford stacks) in terms of quantum $\mathcal{D}$-modules. They also constructed primitive forms and described the B-model Frobenius manifold structure. On the other hand, Reichelt \cite{R} introduced the notion of a \emph{logarithmic Frobenius manifold}. Reichelt-Sevenheck \cite{RS} showed the existence of a primitive form, yielding a logarithmic Frobenius structure associated to the LG model $(X_1, f)$. In fact, all these constructions of primitive forms and (log) Frobenius structures made use of the \emph{mirror family} of LG models $(\mathscr{X}, F)$ in which $(X_1, f)$ is a smooth fiber. For example, the LG mirror family for $Y = \mathbb{P}^2$ is given by $\mathscr{X} = \{(z_1, z_2, z_3; \tau) \in \C^3 \times \C: z_1 z_2 z_3 = \tau \} \subset \C^3 \times \C$ together with the superpotential given by restricting $F = z_1 + z_2 + z_3$ to each fiber $X_\tau$.

\subsection*{Main results}
In this paper, we introduced the notion of a \emph{logarithmic Landau-Ginzburg model} (abbrev. \emph{log LG model}), which is essentially given by equipping the central (singular) fiber $X = X_0$ of the mirror family $\mathscr{X}$ with a natural \emph{log structure} (see Definition \ref{log LG model}). 
For instance, the log LG model mirror to $Y = \mathbb{P}^2$ is given by equipping the singular surface $X = X_0 = \{(z_1, z_2, z_3) \in \C^3: z_1 z_2 z_3 = 0 \}$ (a union of 3 copies of $\mathbb{C}^2$) with a log structure and the superpotential is the restriction $f$ of the function $z_1 + z_2 + z_3$ to $X_0 \subset \C^3$.

Our main result is a perturbative construction of primitive forms from the log LG model mirror to a projective toric manifold, and therefore a construction of a logarithmic Frobenius manifold structure on the base of its universal unfolding (see Section \ref{sec:deformation} and the proof of Theorem \ref{main theorem} for details):
\begin{theorem}\label{main theorem intro}
Let $(X^\dag, \phi, f)$ be the logarithmic Landau-Ginzburg model mirror to a projective toric manifold $Y$. Then there is a perturbative construction of primitive forms which yields a logarithmic Frobenius manifold structure on the base space $\ST$ of the universal unfolding of $(X^\dag, \phi, f)$.
\end{theorem}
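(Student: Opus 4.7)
The plan is to carry out the perturbative construction in the style of Li-Li-Saito \cite{LLS, LLSS}, but adapted to the logarithmic setting. First I would set up the differential graded Lie algebra (DGLA) of logarithmic polyvector fields on $X^\dag$ equipped with a twisted differential of the form $\bar\pa + u\,[f, -] + (\text{log correction})$ which simultaneously controls smoothings of the central fiber $X = X_0$ and unfoldings of the superpotential $f$. The cohomology of this complex gives the state space $\mathbb{H}^*$, which by the earlier result in the paper is canonically isomorphic to $H^*(Y, \C)$. This identification guarantees that the deformation parameters on $\ST$ are the right ones to produce a universal unfolding whose tangent space at the origin matches the expected dimension.

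Next, I would construct a universal solution $\Phi \in \PV^*(X^\dag) \hotimes \OO_\ST$ to the extended \MC equation, built order by order in the coordinates on $\ST$. At each recursive step, the obstruction is killed using a homotopy operator (a propagator) whose existence relies on a Hodge-theoretic splitting of the log de Rham complex. The output is the universal family $\pi \colon \XT \to \ST$ together with the unfolded superpotential $F \colon \XT \to \C$, where each nearby fiber is a smooth LG model deforming $(X^\dag, f)$.

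With the universal family in hand, I would construct the primitive form $\zeta$ following Saito's axiomatic recipe: it must trivialize the Hodge/Brieskorn bundle, be compatible with the log Gauss-Manin connection up to an overall $u$-shift, and be homogeneous with respect to the Euler field. The algorithm of \cite{LLS} proceeds by starting from a natural choice $\zeta_0$ on the central fiber, such as the log holomorphic volume form coming from the toric structure, and then inductively modifying it by a series $\zeta = \zeta_0 + \sum_{k \geq 1} u^{-k}\zeta_k$ so that Saito's higher residue pairing becomes $u$-sesquilinear and the primitivity condition holds. The resulting flat coordinates on $\ST$ and Frobenius potential then follow from Reichelt's formalism \cite{R, RS}, producing the logarithmic Frobenius manifold structure.

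The main obstacle, as is standard in Saito theory, is verifying the existence of a \emph{good opposite filtration} (equivalently, a good section of the Brieskorn lattice) in the logarithmic degenerate setting. Because the central fiber is singular and one must work with the log de Rham and log Brieskorn complexes, this requires a careful $E_1$-degeneration argument for the Hodge-to-de Rham spectral sequence of $(X^\dag, f)$, together with a compatibility check with the log Gauss-Manin connection along $\ST$. Once this Hodge-theoretic input is secured, the remaining steps are formal in the sense that they reduce to the standard algebraic combinatorics of solving a recursive system, and the Frobenius manifold structure on $\ST$ drops out.
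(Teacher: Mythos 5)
There are two genuine gaps in your plan. First, the universal unfolding is not obtained in this paper (nor can it easily be obtained) by solving a Maurer--Cartan equation for a single twisted differential $\bar\pa + u[f,-] + (\text{log correction})$ on polyvector fields of the central fiber. The smoothing direction changes the underlying analytic space and the log structure itself: in the paper the family $\XT^\dag \to \ST^\dag$ is written down \emph{explicitly} and torically, following Gross--Hacking--Keel, via the monoid $P_\varphi \subset M \times A_1(Y,\Z)$ built from a strictly $P$-convex piecewise linear function, with base $\Spec(\C[P])$ where $P = \overline{NE}(Y)$, and the superpotential is unfolded simply as $F = f + \sum_i t_i\varphi_i$ using the monomial basis (Section \ref{unfolding}). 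Your ``log correction'' is left unspecified, and making a MC/DGLA argument work here would require a logarithmic unobstructedness theorem of Bogomolov--Tian--Todorov type (as in \cite{KLM}) plus an identification of the resulting formal family with a genuine log smoothing over $\Spec(\C[P])$ --- none of which is supplied or needed once the explicit toric family is available.

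Second, and more seriously, you underestimate where the real work lies. $E_1$-degeneration of Hodge-to-de Rham is indeed proved (Theorem \ref{Hodge-to-de Rham}, an immediate consequence of the state space isomorphism), but it is far from sufficient for a good opposite filtration. The paper's construction requires: (i) a definition of the higher residue pairing on the \emph{degenerate, non-compact} fiber, which needs the homotopy $T_\rho^u$ to compactly supported log forms (Section \ref{homotopy formula}) and, in the family version $\KK_F$, the regularized integrals of Li--Zhou because the integrands acquire poles of arbitrary order (Appendix \ref{Section of regularized integral}, Proposition \ref{vanishing of trace 2}); (ii) an explicit \emph{good basis} $\{\varphi_i\}$ with $\KK_f(\varphi_i,\varphi_j)\in\C$, whose existence is in general highly nontrivial and is established here only via the Moving Lemma \ref{algebraic moving lemma} and the combinatorics of Fulton's monomial basis (Theorem \ref{good basis}); and (iii) the Deligne extension of this basis along the log base, nilpotency of the residue of the Gauss--Manin connection (Lemma \ref{residue of Gauss-Manin}), and constancy of $\KK_F$ on the extended basis (Lemma \ref{HRP on extension of good basis}), which together yield the good opposite filtration (Proposition \ref{good opposition}) before Birkhoff factorization produces $\zeta$ (Proposition \ref{prop:primitive_form}) and \cite{R} is invoked. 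Declaring these steps ``formal'' once degeneration is known skips precisely the content that makes the theorem true; also note the primitive form is normalized from $\varphi_1 = 1$ (equivalently $\Omega$ under contraction) through the Deligne-extended frame, not by correcting the volume form against a sesquilinearity condition as you describe.
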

The construction of primitive forms and Frobenius manifold structures is along the lines of the perturbative, explicit approach using polyvector fields pioneered by Li-Li-Saito \cite{LLS} (see also \cite{MSaito89, MSaito18}), which is a complex differential-geometric framework inspired by the BCOV gauge theory on Calabi-Yau manifolds \cite{BCOV}. Indeed, our results can be viewed as the second application of the general construction and explicit perturbative formula in \cite{LLS}, after that of exceptional unimodular singularities in \cite{LLSS}.
More precisely, our primitive forms and Frobenius manifold structures can all be determined perturbatively and explicitly from the central degenerate fiber $X = X_0$ of the mirror LG family (or the log LG mirror). This is in sharp contrast with previous constructions, which rely on either the entire or punctured mirror LG family. 

As $X = X_0$ is non-compact, we start the construction by following \cite{LLS} to transform the complex $\PV_{\log}(X)$ of log polyvector fields to that $\PV_{\log, c}(X)$ with compact support via a homotopy (Section \ref{homotopy formula}). This allows us to define the \emph{higher residue pairing} $K_f$ on the (formally completed) \emph{Brieskorn lattice} $\mathcal{H}_f := H(\PV_{\log}(X)[[u]], \bar\pat + df \wedge + u\pat)$ using integration (Section \ref{integration}).
Now the main advantage and novelty in choosing the central degenerate fiber $X = X_0$ as our reference fiber is that we can find particularly simple and explicit \emph{good bases} $\{\varphi_i\}$ (Theorem \ref{good basis}) via the state space isomorphism (Theorem \ref{state space isomorphism}). In general, the existence of good basis is a highly nontrivial problem.
This greatly simplifies the construction of the logarithmic Frobenius structure.

Next we study an unfolding of the log LG model $(X,f)$, which consists of two directions -- one corresponds to the \emph{smoothing} of the singular Calabi-Yau variety $X$ while the other corresponds to \emph{unfolding} of the superpotential $f$. 
We take a \emph{universal unfolding} $(\mathscr{X}, F)$ of $(X, f)$ (Section \ref{unfolding}). Applying a slight generalization of the theory of regularized integrals due to Li-Zhou \cite{LZ} (which is needed because we will encounter integrands which have arbitrary orders of poles; see Appendix \ref{Section of regularized integral}), we can extend the higher residue pairing $K_f$ to $K_F$ and prove that it is compatible with the extended Gauss-Manin connection $\nabla$ (Section \ref{subsec:GM_connection_and_higher_residue}). This gives the main result in Section \ref{subsec:GM_connection_and_higher_residue}, namely, the triple $(\mathcal{H}_F, \nabla, \KK_F)$ forms a \emph{log semi-infinite variation of Hodge structure} (abbrev. \emph{$\frac{\infty}{2}$-LVHS}); see Theorem \ref{properties of HRP}.

At this point, the remaining ingredients in constructing a log Frobenius manifold are a \emph{good opposite filtration} and a \emph{primitive form}. The existence of the former is proved using the \emph{Deligne extension} of a good basis $\{\varphi_i\}$ (Lemma \ref{HRP on extension of good basis} and Proposition \ref{good opposition}), while the latter is constructed using the standard \emph{Birkhoff factorization} procedure (Equation \eqref{spliting}, Lemma \ref{lemma_primitive_form}, Proposition \ref{prop:primitive_form}). From these we obtain a good opposite filtration and a primitive form $\zeta$, hence yielding a logarithmic Frobenius manifold structure on the base of the universal unfolding and completing the proof of Theorem \ref{main theorem intro}; see Section \ref{log Frobenius} for more details.





\subsection*{Relation with the toric A-side}
As our construction is within the framework of Li-Li-Saito \cite{LLS}, the primitive forms and \emph{flat coordinates} (or \emph{semi-infinite period maps}) can all be computed perturbatively and explicitly. 
If we write the primitive form as $\zeta = e^{\tilde{F}/u}$, then the \emph{corrected} superpotential $f + \tilde{F}$, expressed in terms of the flat coordinates (and with the descendant variable set to be zero), should be closely related to the \emph{bulk-deformed Lagrangian Floer superpotential} \cite{FOOO-toricI, FOOO-toricII} of the projective toric manifold $Y$. 

There are very few explicitly known calculations of the bulk-deformed Lagrangian Floer superpotential of a toric manifold. Gross \cite{Gross10, gross2011tropical} constructed an explicit perturbation of the Hori-Vafa potential $f = z_1 + z_2 + z_3$ mirror to $Y = \mathbb{P}^2$ using counts of Maslov index 2 {\em tropical disks}. Recently, Hong-Lin-Zhao \cite{hong2018bulk} generalized this and provided an inductive algorithm to compute the bulk-deformed Lagrangian Floer superpotentials for all Fano toric surfaces using wall-crossing techniques and a tropical-holomorphic correspondence.

For non-bulk-deformed Lagrangian Floer superpotentials (meaning that all the `big quantum variables' are set to be zero; see Section \ref{semi-Fano}), the first calculation was done by Cho-Oh \cite{Cho-Oh06} in the Fano toric case.
Beyond the Fano setting, Auroux \cite{Auroux09} and Fukaya-Oh-Ohta-Ono \cite{FOOO12} first computed the Lagrangian Floer superpotential for the Hirzebruch surface $\mathbb{F}_2$ (Auroux also worked out the $\mathbb{F}_3$ example). Chan-Lau extended this to all semi-Fano toric surfaces in \cite{Chan-Lau10}. More generally, Chan-Lau-Leung-Tseng \cite{CLLT12} and Gonzale\'z-Iritani \cite{Gonzalez-Iritani17} proved that the non-bulk-deformed Lagrangian Floer superpotential of a semi-Fano toric manifold only differs from its Hori-Vafa superpotential by the toric mirror map (a coordinate change), thereby giving an explicit calculation of the non-bulk-deformed superpotential in all such cases. 

In the semi-Fano case, in fact we can see by a weight degree argument that the primitive form is simply given by $\zeta = 1$ when all `big quantum variables' are set to be zero (see Section \ref{semi-Fano}). In this case, we expect that our period map coincides with the toric mirror map and the non-bulk-deformed Lagrangian Floer superpotential of a semi-Fano toric manifold $Y$ is simply given by the Hori-Vafa potential $f$ expressed in terms of the flat coordinates (and restricted to the `small quantum variables'). We verify that this is indeed the case for the Hirzebruch surface $\mathbb{F}_2$ in Section \ref{F2}.

\begin{remark}
    The explicit computations in Section \ref{F2} are all done by hand. In general, the semi-infinite period map and primitive form (and hence the corrected superpotential) of any log LG toric mirror can all be computed up to any desired order using a computer program. 
\end{remark}

\subsection*{Organization of the paper}
The rest of this paper is organized as follows.
In Section \ref{constructions}, we define the logarithmic Landau-Ginzburg model (abbrev. log LG model) $(X^\dag, \phi,  f)$ mirror to a projective toric manifold $Y$.
In Section \ref{state space isom}, we prove the \emph{state space isomorphism} and deduce the \emph{Hodge-to-de Rham degeneration} (Theorem \ref{Hodge-to-de Rham}).
After transforming smooth differential forms/polyvector fields into those with compact support by a homotopy in Section \ref{homotopy formula}, following \cite{LLS}, we define the \emph{higher residue pairing} using integration in Section \ref{integration}.
In Section \ref{grading}, we introduce grading structures, and prove the crucial \emph{Moving Lemma} (Lemma \ref{algebraic moving lemma}) which is the key for constructing a natural, explicit \emph{good basis} (Theorem \ref{good basis} and Proposition \ref{nondegeneracy}).

In Section \ref{unfolding}, we study the unfolding/deformation of our log LG model and deduce the \emph{freeness of the Hodge bundle} (Theorem \ref{freeness of Hodge bundle}).
In Section \ref{subsec:GM_connection_and_higher_residue}, we prove the compatibility between the \emph{Gauss-Manin connection} and the higher residue pairing, which leads to the construction of a \emph{log semi-infinite variation of Hodge structure} (abbrev. \emph{$\frac{\infty}{2}$-LVHS}) $(\mathcal{H}_F, \nabla, \KK_F)$ in Theorem \ref{properties of HRP}.
In Section \ref{log Frobenius}, we construct a \emph{good opposite filtration} (Proposition \ref{good opposition}) and a \emph{primitive form} (Proposition \ref{prop:primitive_form}), thereby producing the \emph{logarithmic Frobenius manifold} structure.
In Section \ref{semi-Fano}, we further study the case when $Y$ is \emph{semi-Fano} (i.e., the anticanonical bundle $K_Y^{-1}$ is nef). We show that the period map restricted to the small quantum variables (i.e., those which correspond to $H^2(Y)$) is of the shape of the toric mirror map (Theorem \ref{small period map}), as expected. Finally, explicit calculations for the Hirzebruch surface $\mathbb{F}_2$ are provided in Section \ref{F2}.


\subsection*{Acknowledgement}

We would like to thank Hiroshi Iritani, Changzheng Li, Si Li and Junwu Tu for very helpful discussions.
K. Chan was supported by grants from the Hong Kong Research Grants Council (Project No. CUHK14301420, CUHK14301621 \& CUHK14305322).
Z. N. Ma was supported by National Natural Science Foundation of China (Grant No. K23281001, K23281103).
H. Wen was supported by the Young Scientists Fund of the National Natural Science Foundation of China (Grant No. 12201314).

\subsection*{Notation summary}

In this subsection, we explain some notation that will be frequently used.
When $P$ is a monoid, $\C[P]$ denotes the monoid ring of $P$ with coefficients in $\C$. We will work with analytic spaces and write $\text{Spec}(\C[P])$ as the corresponding toric analytic variety. When $X$ is a log analytic space, we will write $X^\dag$ to emphasize its log structure.
When $\phi: X^\dag \to Y^\dag$ is a map of log spaces, $\Omega^1_{X^\dag / Y^\dag}$ and $\Theta^1_{X^\dag / Y^\dag}$ denote the sheaf of relative log differentials and sheaf of relative log derivations on $X$ respectively.
Let $u$ be a formal variable and $A$ be a $\C$-vector space. Then $A[[u]]$ and $A((u))$ denote the space of $A$-valued formal power series and formal Laurent power series in $u$ respectively.

\section{The log LG model mirror to a projective toric manifold}\label{section: log LG models}

\subsection{Constructions} \label{constructions}

In this paper, we assume the reader has some familiarity with log geometry; for references, see \cite{Ka1, Ka2}.
We first describe the local model used in the mirror symmetry program of Gross and Siebert, see \cite{Gross-Siebert-logI}.

Let $N \cong \Z^n$ be a lattice of rank $n$ and $N_\R := N \otimes _\Z \R$ the real vector space it spans.
Let $\Sigma \subset N_\R$ be a smooth complete fan which defines an $n$-dimensional smooth projective toric manifold $Y$.
Since $Y$ is projective, there is a strictly convex piecewise linear function $\psi$ on $|\Sigma|$ with only integral slopes.
The subset
\begin{equation*}
    P:=\{(n,r) | n \in |\Sigma|, r\geq \psi(n)\} \subset N_{\R}\oplus \R,
\end{equation*}
is a strictly convex rational polyhedral cone and hence defines an affine toric variety
\begin{equation*}
    \mathscr{X} := \text{Spec}(\C[P \cap (N\oplus \Z)]).
\end{equation*}

The natural inclusion $\N \hookrightarrow P \cap (N\oplus \Z)$ given by $q \to (0,q)$ is an inclusion of monoids, which induces a map $\mathscr{X} \to \C = \text{Spec}(\C[\N])$.
Let $X$ be the fiber over $0 \in \C$. Equip $\mathscr{X}$ with the log structure induced by the inclusion $X \into \mathscr{X}$ and equip $\C$ with the log structure induced by the inclusion $0\into \C$. Then $\Phi:\mathscr{X}^\dag \to \C^\dag$ is log smooth. By pulling back the log structures on $\mathscr{X}^\dag$ and $\C^\dag$ respectively, we get a log smooth map
\begin{equation*}
    \phi: X^{\dag} \to 0^\dag.
\end{equation*}

The space $X$ can be described more explicitly.
Let $\Sigma(1) := \{\rho_1,\cdots,\rho_d\} \subset N$ be primitive generators of the $1$-dimensional cones of $\Sigma$.
Recall that a subset $\mathcal{P}$ of $\Sigma(1)$ is a \emph{primitive collection} if $\mathcal{P}$ is not the set of generators of a cone of $\Sigma$, while for $0\leq k <p$, any $k$ elements of $\mathcal{P}$ generate a $k$-dimensional cone in $\Sigma$.
Each primitive collection $\mathcal{P} := \{\rho_{i_1},\cdots,\rho_{i_p}\}$ defines an equation $E_{\mathcal{P}}$ as follows
\begin{equation*}
    z_{i_1} \cdots z_{i_p} = 0.
\end{equation*}
When $\mathcal{P}$ runs over all primitive collections in $\Sigma(1)$, the equations $\{E_{\mathcal{P}}\}_{\mathcal{P}}$ define an affine space in $\C^d$, and this is exactly the central fiber $X$.
In other words, the ideal in $\C[z] := \C[z_1,\cdots,z_d]$ defining $X$ is the \emph{Stenley-Reisner ideal} $SR(\Sigma)$ of $\Sigma$.
Now $X = \cup_i X_i$ is the union of $n$-dimensional coordinate hyperplanes in $\C^d$, each corresponding to an $n$-dimensional cone of $\Sigma$. For convenience, we denote by $X_1$ the component defined by $z_l=0$ for $n+1 \leq l \leq d$.

Since $\phi: X^{\dag} \to 0^\dag$ is log smooth, the sheaf $\Omega^\bullet_{X^\dag / 0^\dag}$ of (relative) log differentials and 
the sheaf $\Theta^\bullet_{X^\dag / 0^\dag}$ of (relative) log derivations are both locally free on $X$.
Let us describe these sheaves explicitly.
Since $\Sigma$ is smooth, we can assume, without loss of generality, that $\rho_1=e_1,\cdots,\rho_n=e_n$ is the standard basis of $N$. Assume that when $n+1 \leq l \leq d$,
\begin{equation*}
    \rho_l = \sum_{i=1}^n a_{il} \rho_i.
\end{equation*}
By construction, we have
\begin{equation*}
    z_l = \prod_{i=1}^n z_i^{a_{il}} \cdot q^m
\end{equation*}
for some $m \in \Z_+$. By the definition of $\Omega^1_{X^\dag / 0^\dag}$, we have the relations
\begin{equation} \label{relation of log differential}
    \frac{\td z_l}{z_l} = \sum_{i=1}^n a_{il}\cdot \frac{\td z_i}{z_i}, \quad \forall n+1 \leq l \leq d.
\end{equation}
So $\Omega^1_{X^\dag / 0^\dag}$ is the $\cO_X$-module $\oplus_{i=1}^d \cO_X \frac{\td z_i}{z_i}$ modulo the above relations, which is a locally free sheaf with a global frame $\frac{\td z_1}{z_1},\dots,\frac{\td z_n}{z_n}$.

We set $\Omega^k_{X^\dag / 0^\dag} := \wedge^k \Omega^1_{X^\dag / 0^\dag}$. On any $n$-dimensional component $X_i$ of $X$, any $\alpha \in \Omega^k_{X^\dag / 0^\dag}$ has a unique representative $\alpha|_{X_i}$ given by a holomorphic $k$-form with at most log poles on $X_j \cap \text{Sing}\, X$. The compatibility condition is that if $X_i$ and $X_j$ has an $(n-1)$-dimensional intersection $D_{ij}$, then the restriction of $\alpha|_{X_i}$ and $\alpha|_{X_j}$ to $D_{ij}$ agrees when taking into account the relations \eqref{relation of log differential}.

Writing $\theta_i := z_i\frac{\pa}{\pa z_i}$ as vector fields on $\text{Spec}( \C[z])$, we can express $\Theta^1_{X^\dag / 0^\dag}$ as the $\cO_X$-submodule
\begin{equation} \label{relation of log derivation}
    \left\{\sum_{i=1}^d b_i \theta_i : b_l = \sum_{i=1}^n b_i a_{il}, \forall n+1 \leq l \leq d \right\}
\end{equation}
of $\Theta^1_{X^{\dag}}=\oplus_{i=1}^d \cO_X \theta_i$, and we set $\Theta^k_{X^\dag / 0^\dag} := \wedge^k \Theta^1_{X^\dag / 0^\dag}$. Any $v \in \Theta^1_{X^\dag / 0^\dag}$ restricts to a log derivation on each $n$-dimensional component of $X$.

Let $\Omega := \wedge_{i=1}^n \frac{\td z_i}{z_i}$. Then it defines a global nowhere vanishing log differential form in $\Omega^n_{X^\dag / 0^\dag}$ on $X$. Hence $X^\dag$ is in fact log Calabi-Yau.
As usual, contraction with $\Omega$ defines a map
\begin{equation*}
    \Theta^k_{X^\dag / 0^\dag} \to \Omega^{n-k}_{X^\dag / 0^\dag}, \quad \theta \mapsto \theta \vdash \Omega.
\end{equation*}

The following is the definition of a logarithmic version of Landau-Ginzburg models, though we will restrict ourselves only to a narrow class.
\begin{definition}[Logarithmic Landau-Ginzburg model]\label{log LG model}
    A \emph{logarithmic Landau-Ginzburg model} (abbrev. log LG model) is a triple $(X^\dag, \phi,  f)$, where $X^\dag$ is a log Calabi-Yau analytic space with underlying space $X$, $\phi:X^\dag \to 0^\dag$ is a log smooth map from $X^\dag$ to the standard $P$-log point $0^\dag$ associated to some toric monoid $P$, and $f$ is a holomorphic function (called the \emph{superpotential}) on $X$ such that when $\td f$ is viewed as a relative log differential, the critical set
    \begin{align*}
        \text{Crit}(f) := \{z\in \C^d : \td f(z) = 0\}
    \end{align*}
    is compact.
\end{definition}

\begin{remark}
	In Section \ref{constructions}, we are taking the toric monoid $P=\mathbb{N}$. In Section \ref{sec:deformation} and later sections when we consider universal unfoldings, we may need to consider a more general $P$. 
\end{remark}

\begin{remark}
    The definition of a log LG model may be generalized to the case when the log structure on $X$ has singularities, but we do not pursue it here.
\end{remark}

The function $f =z_1+\cdots+z_d$ defines a superpotential on $X$. As will be shown in the next section, $\td f$ has only isolated zeros, hence the triple $(X^\dag, \phi,  f)$ defines a log LG model; we regard it as a mirror of the projective toric manifold $Y$.

\subsection{The state space isomorphism}\label{state space isom}

Let us list some important complexes of sheaves.
Note that $\td f \wedge$ and the holomorphic de Rham differential $\pa$ are well-defined differentials on $\Omega^\bullet_{X^\dag / 0^\dag}$. The first complex is
\begin{equation*}
    (\Omega^\bullet_{X^\dag / 0^\dag}, \td f \wedge),
\end{equation*}
where $\td f$ is viewed as a global log differential.
By conjugating with $\Omega$, we get a second complex
\begin{equation*}
    (\Theta^\bullet_{X^\dag / 0^\dag},\{f,-\}).
\end{equation*}
Note that $\{f,-\}$ acts as a derivation, hence the cohomology group has a product structure.

Let $u$ be a formal variable. Then we can consider the complex
\begin{equation*}
     (\Omega^\bullet_{X^\dag / 0^\dag}[[u]], \td f \wedge + u \pa),
\end{equation*}
where $[[u]]$ denotes formal power series in $u$.
Again by conjugating with $\Omega$, we obtain another complex
\begin{equation*}
     (\Theta^\bullet_{X^\dag / 0^\dag}[[u]], \{f,-\} + u \pa).
\end{equation*}
The following result can be viewed as a mirror theorem \emph{without} quantum corrections.

\begin{theorem}[State space isomorphism] \label{state space isomorphism}
    Let $(X^\dag,\phi, f)$ be the log LG model obtained from a projective toric manifold $Y$. Then for $i>0$, we have
    \begin{equation*}
        \mathbb{H}^i(\Theta^\bullet_{X^\dag / 0^\dag},\{f,-\}) = 0;
    \end{equation*}
    while for $i=0$, we have the following ring isomorphism
    \begin{equation*}
        \mathbb{H}^0(\Theta^\bullet_{X^\dag / 0^\dag},\{f,-\}) \iso H^*(Y,\C).
    \end{equation*}
\end{theorem}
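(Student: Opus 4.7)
The plan is to use the log Calabi--Yau structure to convert the polyvector complex into a log de Rham complex, and then identify the latter with an explicit Koszul complex over the Stanley--Reisner ring $R:=\cO_X(X)=\C[z_1,\dots,z_d]/SR(\Sigma)$. Since $X$ is an affine subvariety of $\C^d$, coherent cohomology vanishes in positive degree, so the hypercohomology $\mathbb{H}^\bullet$ agrees with the cohomology of the complex of global sections; this lets me work entirely at the level of $R$-modules.

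First I would observe that contraction with the log volume form $\Omega$ gives an $\cO_X$-linear isomorphism $\Theta^k_{X^\dag/0^\dag}\xrightarrow{\sim}\Omega^{n-k}_{X^\dag/0^\dag}$ intertwining $\{f,-\}$ with $\td f\wedge$, via the standard identity $\iota_v\Omega\wedge \td g=v(g)\,\Omega$; so it suffices to treat the equivalent complex $(\Omega^\bullet_{X^\dag/0^\dag}(X),\td f\wedge)$. Using the relations \eqref{relation of log differential}, the free $R$-module $\Omega^k_{X^\dag/0^\dag}(X)$ has global basis $\{\td z_I/z_I:I\subset\{1,\dots,n\},\,|I|=k\}$, and expanding $\td f=\sum_j \td z_j=\sum_j z_j\,(\td z_j/z_j)$ through those same relations yields
\[
\td f=\sum_{i=1}^n w_i\,\frac{\td z_i}{z_i},\qquad w_i:=z_i+\sum_{l>n}a_{il}z_l=\sum_{j=1}^d\langle e_i^*,\rho_j\rangle z_j.
\]
Hence $(\Omega^\bullet_{X^\dag/0^\dag}(X),\td f\wedge)$ is literally the Koszul complex on the sequence $(w_1,\dots,w_n)$ in $R$.

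The heart of the argument is the regularity of $(w_1,\dots,w_n)$ in $R$, which I would extract from two classical toric-geometric inputs. On the one hand, because $\Sigma$ is a complete smooth fan in $\R^n$, its simplicial complex of rays triangulates $S^{n-1}$, so Reisner's criterion forces $R$ to be Cohen--Macaulay of Krull dimension $n$. On the other hand, the $w_i$ are precisely the linear forms appearing in the Danilov--Jurkiewicz presentation $H^*(Y,\C)=\C[z_1,\dots,z_d]/(SR(\Sigma)+(w_1,\dots,w_n))$, and the resulting quotient is finite dimensional because $Y$ is compact; thus $(w_1,\dots,w_n)$ is a system of parameters in a Cohen--Macaulay ring and hence a regular sequence.

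With regularity in hand, the Koszul complex is acyclic except in a single degree, where its cohomology is $R/(w_1,\dots,w_n)=H^*(Y,\C)$. Transporting this back along the Calabi--Yau isomorphism concentrates $\mathbb{H}^\bullet(\Theta^\bullet_{X^\dag/0^\dag},\{f,-\})$ in degree $0$ with value $\cO_X(X)/(w_1,\dots,w_n)$; the ring structure is the one inherited from the commutative product on $\Theta^0=\cO_X$, and matches the usual product on the Danilov--Jurkiewicz presentation of $H^*(Y,\C)$. The main delicate step is the regularity of $(w_1,\dots,w_n)$, which rests on both the completeness and smoothness of $\Sigma$ (through Reisner) and the compactness of $Y$ (for the parameter condition); minor care is also needed to verify that the Calabi--Yau conjugation preserves the grading in the way used above.
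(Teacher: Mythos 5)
Your proposal is correct and follows essentially the same route as the paper: conjugate by $\Omega$ to reduce to the complex $(\Omega^\bullet_{X^\dag/0^\dag}(X),\td f\wedge)$, recognize it as the Koszul complex on the linear forms $w_i=\sum_j\langle e_i^*,\rho_j\rangle z_j$ in the Stanley--Reisner ring, prove they form a regular sequence, and identify the resulting quotient with $H^*(Y,\C)$ via the Danilov--Jurkiewicz presentation. The only (cosmetic) difference is that you justify regularity by making the Cohen--Macaulay input explicit (Reisner's criterion plus the system-of-parameters argument), whereas the paper phrases it as checking that the origin is the only common zero on each component of $X$, leaving the Cohen--Macaulayness of $\C[z]/SR(\Sigma)$ implicit.
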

\begin{proof}
        We will show that
    \begin{equation}\label{cohomology computations}
        \mathbb{H}^i(\Omega^\bullet_{X^\dag / 0^\dag}, \td f \wedge) \iso
        \left\{
            \begin{array}{cc}
               0,  &  i<n;\\
               \C[z]/(P(\Sigma)+SR(\Sigma))\cdot \Omega,  & i=n.
            \end{array}
        \right.
    \end{equation}
    Using the conjugation by $\Omega$, the theorem then follows from the well-known result on the cohomologies of compact toric manifolds.
    In equation \eqref{cohomology computations}, $P(\Sigma)$ denotes the ideal in $\C[z]$ generated by $n$ elements
    \begin{equation*}
        \sum_{i=1}^d \langle \rho_i,e^1\rangle z_i, \cdots, \sum_{i=1}^d \abracket{\rho_i,e^n} z_i,
    \end{equation*}
    where $e^1,\cdots,e^n$ is the basis of the dual lattice $M = \check{N} = \text{Hom}(N,\mathbb{Z})$ dual to $e_1,\cdots,e_n$.
    When written as a log differential,
    \begin{equation*}
        \td f = z_1\cdot \frac{\td z_1}{z_1} + \cdots + z_d\cdot \frac{\td z_d}{z_d}.
    \end{equation*}
    By the relations (\ref{relation of log differential}), we have
    \begin{align*}
        \td f
        =& (z_1+\sum_{l=n+1}^d a_{l1}z_l)\frac{\td z_1}{z_1} + \cdots +(z_n+\sum_{l=n+1}^d a_{ln}z_l)\frac{\td z_n}{z_n} \\
        =& \sum_{i=1}^d\langle \rho_i,e^1 \rangle z_i \frac{\td z_1}{z_1} + \cdots +\sum_{i=1}^d\abracket{\rho_i,e^n}z_i \frac{\td z_n}{z_n}.
    \end{align*}
     Since $X$ is affine, the hypercohomology has the form 
     $$\mathbb{H}^i(\Omega^\bullet_{X^\dag / 0^\dag}, \td f \wedge) \iso
         H^i(\Omega^\bullet_{X^\dag / 0^\dag}(X), \td f \wedge).$$
    To conclude that $ \mathbb{H}^i(\Omega^\bullet_{X^\dag / 0^\dag}, \td f \wedge)$ has the desired property, it is sufficient to show that
    \begin{equation*}
        \sum_{i=1}^d \langle \rho_i,e^1 \rangle z_i, \cdots, \sum_{i=1}^d\abracket{\rho_i,e^n}z_i
    \end{equation*}
    form a regular sequence in $\C[z]/SR(\Sigma)$.
    This is true if the origin is the only common zero of the sequence, which in turn can be verified on each component of $X$.
    Let $X_1$ be the $n$-dimensional component with $z_i = 0$ when $n+1 \leq i \leq d$. Then on $X_1$, the sequence becomes $z_1,\cdots,z_n$, so indeed the origin is the only common zero.
    The same holds on any other component of $X$ since $\Sigma$ is a smooth fan.
\end{proof}

Since $H^*(\Theta^\bullet_{X^\dag / 0^\dag},\{f,-\})$ is concentrated at degree $0$, we have the following \emph{Hodge-to-de Rham degeneration property}:
\begin{theorem}[Hodge-to-de Rham degeneration] \label{Hodge-to-de Rham}
    The spectral sequence associated to the $u$-adic filtration of the complex $(T^*_{X^\dag / 0^\dag}[[u]],\{f,-\}+ u \pa)$ degenerates at the $E_1$ page; in particular, we have the isomorphism
    \begin{equation*}
        H^*(\Theta^\bullet_{X^\dag / 0^\dag}[[u]],\{f,-\}+u \pa) \iso H^*(\Theta^\bullet_{X^\dag / 0^\dag},\{f,-\})[[u]].
    \end{equation*}
\end{theorem}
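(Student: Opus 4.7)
The plan is to endow the complex $(\Theta^\bullet_{X^\dag/0^\dag}[[u]], \{f,-\}+u\pa)$ with the standard $u$-adic filtration $F^p := u^p \Theta^\bullet_{X^\dag/0^\dag}[[u]]$ and then deduce degeneration of the associated spectral sequence directly from Theorem \ref{state space isomorphism}. The filtration is stable under $\{f,-\}$ and is strictly raised by $u\pa$, so it is compatible with the total differential; it is also exhaustive, Hausdorff and $u$-adically complete, which ensures strong convergence of the spectral sequence.

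First I would identify the $E_0$ and $E_1$ pages. The associated graded $\mathrm{gr}^p \iso u^p \Theta^\bullet_{X^\dag/0^\dag}$ inherits only the piece of $\{f,-\}+u\pa$ that preserves filtration modulo $F^{p+1}$, namely $\{f,-\}$. Hence
\[
E_1^{p,q} \iso u^p \cdot H^{p+q}(\Theta^\bullet_{X^\dag/0^\dag}, \{f,-\}).
\]
Theorem \ref{state space isomorphism} says the right-hand side vanishes unless $p+q=0$, so the entire $E_1$ page is concentrated on the single antidiagonal $p+q=0$.

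Next I would observe that the differential on the $r$-th page has bidegree $(r, 1-r)$, so it raises the total cohomological degree by $1$ and must therefore land in the slot with $p+q = 1$. That slot is already zero on $E_1$ and hence on every subsequent page, so $d_r = 0$ for all $r \geq 1$ and the spectral sequence collapses at $E_1$. Reassembling the $E_\infty = E_1$ terms along $p+q=0$ yields, after accounting for $u$-adic completeness, the desired identification $H^0(\Theta^\bullet_{X^\dag/0^\dag}[[u]], \{f,-\}+u\pa) \iso H^0(\Theta^\bullet_{X^\dag/0^\dag}, \{f,-\})[[u]]$, while all other total degrees vanish.

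The only step requiring a little care is the convergence of the spectral sequence in the completed setting and the associated reconstruction of the cohomology from $E_\infty$. I expect this to be the only mildly delicate point, but it is completely standard for $u$-adic filtrations: a routine induction on $N$ using Theorem \ref{state space isomorphism} shows that each truncation $H^\bullet(\Theta^\bullet_{X^\dag/0^\dag}[u]/u^N, \{f,-\}+u\pa)$ equals $H^*(Y,\C) \otimes \C[u]/u^N$, so the defining tower is a surjective system of finite-dimensional vector spaces, the $\varprojlim^1$ obstruction vanishes, and the inverse limit reproduces the claimed $H^*(Y,\C)[[u]]$.
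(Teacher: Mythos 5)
Your proposal is correct and is essentially the paper's own argument: the paper deduces degeneration immediately from Theorem \ref{state space isomorphism}, since concentration of $H^*(\Theta^\bullet_{X^\dag/0^\dag},\{f,-\})$ in a single degree forces all differentials $d_r$ ($r\geq 1$) of the $u$-adic spectral sequence to vanish. The convergence/completeness discussion you add (truncations $\C[u]/u^N$, Mittag-Leffler, $\varprojlim^1=0$) is the standard bookkeeping the paper leaves implicit, so there is no substantive difference in approach.
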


\begin{remark}
    Results on complexes of log differential forms can be translated into results on complexes of log derivations by conjugation with $\Omega$, and vice versa. A minor difference is that it is possible to define a ring structure on the complexes of log derivations.
\end{remark}

\subsection{A homotopy formula}\label{homotopy formula}

Let $\A^{0,j}$ be the sheaf of smooth differential $(0,j)$-forms on $X$.
Concretely, $\alpha \in \A^{0,j}(U)$ is given by a smooth $(0,j)$-form $\alpha_i$ on each $U \cap X_i$ such that the pull-back of $\alpha_i$ and $\alpha_j$ to $U \cap X_i \cap X_j$ coincide.
Define
\begin{equation*}
    \A^{i,j}_{\log} := \A^{0,j} \otimes_{\mathcal{O}_X} \Omega^i_{X^\dag/0^\dag},\quad
    \PV^{i,j}_{\log} := \A^{0,j} \otimes_{\mathcal{O}_X} \Theta^i_{X^\dag/0^\dag}.
\end{equation*}
By Theorem \ref{Dolbeault_resolution} and the fact that $\Omega^i_{X^\dag/0^\dag}$ and $\Theta^i_{X^\dag/0^\dag}$ are locally free sheaves on $X$, we have the following corollary:
\begin{corollary}
    The following two complexes are exact:
    \begin{align*}
        0& \to \Omega^p_{X^\dag / 0^\dag} \to \A^{p,0}_{\log} \stackrel{\bar\pa}{\longrightarrow} \A^{p,1}_{\log} \stackrel{\bar\pa}{\longrightarrow} \A^{p,2}_{\log} \stackrel{\bar\pa}{\longrightarrow} \cdots, \\
        0& \to \Theta^p_{X^\dag / 0^\dag}\to \PV^{p,0}_{\log} \stackrel{\bar\pa}{\longrightarrow} \PV^{p,1}_{\log} \stackrel{\bar\pa}{\longrightarrow} \PV^{p,2}_{\log} \stackrel{\bar\pa}{\longrightarrow} \cdots.
    \end{align*}
\end{corollary}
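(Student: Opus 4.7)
The plan is to reduce both exactness statements to a single input: the Dolbeault resolution on the (singular) log space $X$, namely the exactness of
\[
0 \to \mathcal{O}_X \to \A^{0,0} \xrightarrow{\bar\pa} \A^{0,1} \xrightarrow{\bar\pa} \A^{0,2} \xrightarrow{\bar\pa} \cdots
\]
which is exactly what Theorem~\ref{Dolbeault_resolution} (quoted just above the statement) furnishes. Once this is available, both complexes in question should follow by tensoring with a locally free holomorphic sheaf.

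More concretely, I would first observe that $\Omega^p_{X^\dag/0^\dag}$ is locally free as an $\mathcal{O}_X$-module, as already explained in the excerpt for $p=1$ (with global frame $\frac{\td z_1}{z_1},\dots,\frac{\td z_n}{z_n}$) and hence for all $p$ after taking exterior powers; the same holds for $\Theta^p_{X^\dag/0^\dag}$ via the description in \eqref{relation of log derivation}. Local freeness means that for any point of $X$ we can pick a local holomorphic frame $\{\omega_\alpha\}$ of $\Omega^p_{X^\dag/0^\dag}$, and any element of $\A^{p,j}_{\log}=\A^{0,j}\otimes_{\mathcal{O}_X}\Omega^p_{X^\dag/0^\dag}$ is locally a unique finite sum $\sum_\alpha \eta_\alpha\otimes\omega_\alpha$ with $\eta_\alpha\in\A^{0,j}$, and $\bar\pa$ acts by $\bar\pa(\sum \eta_\alpha\otimes\omega_\alpha)=\sum(\bar\pa\eta_\alpha)\otimes\omega_\alpha$ since the $\omega_\alpha$ are holomorphic.

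Given this, exactness at $\A^{p,j}_{\log}$ for $j\geq 1$ reduces termwise (frame by frame) to exactness of $(\A^{0,\bullet},\bar\pa)$ at $\A^{0,j}$, and exactness at $\A^{p,0}_{\log}$ together with injectivity of $\Omega^p_{X^\dag/0^\dag}\hookrightarrow \A^{p,0}_{\log}$ reduces to the inclusion $\mathcal{O}_X\hookrightarrow \A^{0,0}$ and the identification of its kernel under $\bar\pa$. Thus the first complex is exact. The second complex follows by exactly the same argument applied to $\Theta^p_{X^\dag/0^\dag}$, or alternatively by using the global isomorphism $\Theta^p_{X^\dag/0^\dag}\cong \Omega^{n-p}_{X^\dag/0^\dag}$ given by contraction with the nowhere-vanishing log volume form $\Omega$ introduced above the definition of log LG model.

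The only mildly delicate point is checking that the tensor-product-of-sheaves construction commutes with $\bar\pa$ in the way just indicated on the singular space $X$; this is where local freeness of $\Omega^p_{X^\dag/0^\dag}$ (and $\Theta^p_{X^\dag/0^\dag}$) is essential, because it lets us choose local holomorphic frames and reduce everything to the already-established scalar Dolbeault lemma of Theorem~\ref{Dolbeault_resolution}. No serious new analysis is required; the statement is essentially a formal consequence, which is why it is phrased as a corollary.
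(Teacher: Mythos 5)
Your argument is exactly the paper's: the corollary is stated there as an immediate consequence of Theorem \ref{Dolbeault_resolution} together with the local freeness of $\Omega^p_{X^\dag/0^\dag}$ and $\Theta^p_{X^\dag/0^\dag}$, i.e.\ one tensors the scalar Dolbeault resolution with a locally free sheaf using local holomorphic frames, just as you spell out. Your write-up is correct and simply makes explicit the reduction the paper leaves implicit.
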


Let $\A^{i,j}_{\log}(X)$ and $\PV^{i,j}_{\log}(X)$ be the global sections of these sheaves. We define $$\A_{\log}(X) := \bigoplus_{i,j} \A^{i,j}_{\log}(X),\quad 
\PV_{\log}(X) := \bigoplus_{i,j} \PV^{i,j}_{\log}(X),$$
and denote by $\A_{\log,c}(X)$ and $\PV_{\log,c}(X)$ the subspaces of the corresponding spaces that consist of elements with only compact support. It is clear that they are all super-commutative algebras over the algebra of smooth functions on $X$. In the sequel, if $\alpha$ is in $\A^{i,j}_{\log}(X)$ or $\PV^{i,j}_{\log}(X)$, we will call it a differential form or polyvector field of \emph{bidgree $(i,j)$}, and define $|\alpha| = i+j$ for $\alpha \in \A^{i,j}_{\log}(X)$ and $|\alpha| = i-j$ for $\alpha \in \PV^{i,j}_{\log}(X)$.
By abuse of notation, we will use $\bar\pa_f$ to denote the operator $\bar\pa + \td f \wedge$ when it acts on differential forms and the operator $\bar\pa + \{f,-\}$ when it acts on polyvector fields.
\begin{corollary}
    We have
    \begin{align*}
        H(\Omega^\bullet_{X^\dag / 0^\dag},\td f) \cong& H( \A_{\log}(X), \bar\pa_f),\\
        H(\Theta^\bullet_{X^\dag / 0^\dag},\{f,-\}) \cong& H( \PV_{\log}(X), \bar\pa_f).
    \end{align*}
\end{corollary}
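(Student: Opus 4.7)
The plan is a standard double-complex argument. Consider the double complexes $(\A^{\bullet,\bullet}_{\log}(X),\bar\pa,\td f\wedge)$ and $(\PV^{\bullet,\bullet}_{\log}(X),\bar\pa,\{f,-\})$ with their respective total differentials $\bar\pa_f$. I wish to show that in each case the total cohomology recovers the hypercohomology on the corresponding left-hand side.

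First I would filter the total complex by the log-form (respectively log-polyvector) degree $i$ and analyze the resulting spectral sequence. By the preceding corollary, for each $p$ the row $(\A^{p,\bullet}_{\log},\bar\pa)$ is a resolution of $\Omega^p_{X^\dag/0^\dag}$ by fine sheaves, since each $\A^{p,j}_{\log}$ is a module over the sheaf of smooth functions on $X$, which admits partitions of unity; the same is true of $(\PV^{p,\bullet}_{\log},\bar\pa)$ for $\Theta^p_{X^\dag/0^\dag}$. The $\bar\pa$-cohomology of the global-sections row therefore computes the sheaf cohomology $H^\bullet(X,\Omega^p_{X^\dag/0^\dag})$. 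Since $X$ is a closed analytic subvariety of $\C^d$ (a union of coordinate linear subspaces), it is Stein, so Cartan's Theorem B applied to the coherent, locally free sheaves $\Omega^p_{X^\dag/0^\dag}$ and $\Theta^p_{X^\dag/0^\dag}$ yields vanishing in positive degrees and $\Omega^p_{X^\dag/0^\dag}(X)$ (respectively $\Theta^p_{X^\dag/0^\dag}(X)$) in degree zero.

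Consequently the $E_1$-page collapses onto its bottom row, on which the induced $d_1$ differential is exactly $\td f\wedge$ (respectively $\{f,-\}$). The $E_2$-page therefore equals $H(\Omega^\bullet_{X^\dag/0^\dag}(X),\td f\wedge)$ (respectively $H(\Theta^\bullet_{X^\dag/0^\dag}(X),\{f,-\})$), and the spectral sequence degenerates there for degree reasons. Affineness of $X$ further identifies these cohomologies of global sections with the hypercohomologies on the left, exactly as already invoked in the proof of Theorem \ref{state space isomorphism}, completing both isomorphisms. The only point needing a moment of care is the applicability of Cartan's Theorem B on the singular space $X$, but this is automatic once $X$ is recognized as a closed analytic subvariety of $\C^d$; so no serious obstacle arises. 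Alternatively, once the differential-form version is established, the polyvector version follows immediately from contraction with the global nowhere-vanishing log volume form $\Omega$.
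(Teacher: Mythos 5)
Your proposal is correct and is essentially the argument the paper leaves implicit: the preceding corollary provides fine ($\mathcal{A}^{0,\bullet}$-module) resolutions of the locally free sheaves $\Omega^p_{X^\dag/0^\dag}$ and $\Theta^p_{X^\dag/0^\dag}$, and Stein/affineness of $X$ (as already invoked in the proof of the state space isomorphism) kills the higher sheaf cohomology, so the double-complex spectral sequence collapses onto the row computing $H(\Omega^\bullet_{X^\dag/0^\dag}(X),\td f\wedge)$, respectively $H(\Theta^\bullet_{X^\dag/0^\dag}(X),\{f,-\})$. No gaps; the contraction with $\Omega$ remark is also consistent with how the paper passes between forms and polyvector fields.
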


If we denote by $Q_f$ the operator $\bar\pat_f + u\pat$, we have a similar result:
\begin{corollary}
    We have
    \begin{align*}
        H(\Omega^\bullet_{X^\dag / 0^\dag}[[u]],\td f \wedge +u\pat) \cong& H( \A_{\log}(X)[[u]], Q_f),\\
        H(\Theta^\bullet_{X^\dag / 0^\dag}[[u]],\{f,-\}+u\pat) \cong& H( \PV_{\log}(X)[[u]], Q_f).
    \end{align*}
\end{corollary}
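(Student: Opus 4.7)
The plan is to lift the quasi-isomorphism of the previous corollary through the $u$-adic filtration. I would first define a map of complexes
\[
\iota : \bigl(\Omega^\bullet_{X^\dag / 0^\dag}[[u]],\ \td f\wedge + u\pat\bigr) \hookrightarrow \bigl(\A_{\log}(X)[[u]],\ Q_f\bigr)
\]
by $u$-linearly extending the natural inclusion $\Omega^i_{X^\dag / 0^\dag} \hookrightarrow \A^{i,0}_{\log}(X)$. This is a cochain map since $\bar\pa$ kills holomorphic log differentials, so $Q_f \circ \iota = \iota \circ (\td f\wedge + u\pat)$. The analogous inclusion $\iota : \Theta^\bullet_{X^\dag / 0^\dag}[[u]] \hookrightarrow \PV_{\log}(X)[[u]]$ is constructed in the same way, and the two cases are interchangeable via the contraction $\theta \mapsto \theta \vdash \Omega$.

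Next I would equip both sides with the decreasing, complete, exhaustive $u$-adic filtration $F^p := u^p \cdot (-)$, which $\iota$ clearly preserves. Because $u\pat$ strictly raises filtration degree by one, the $E_0$ differential on each associated graded piece $\operatorname{Gr}^p F$ is simply $\td f \wedge$ on the domain and $\bar\pa_f = \bar\pa + \td f\wedge$ on the target. Hence the induced map on $E_1$ pages is, up to the factor $u^p$, precisely the isomorphism
\[
H\bigl(\Omega^\bullet_{X^\dag / 0^\dag},\ \td f\wedge\bigr) \;\xrightarrow{\ \cong\ }\; H\bigl(\A_{\log}(X),\ \bar\pa_f\bigr)
\]
furnished by the previous corollary.

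Finally, I would invoke the standard comparison theorem: a filtration-preserving cochain map between $u$-adically complete filtered complexes that is an $E_1$-isomorphism is a quasi-isomorphism. This yields the first claimed isomorphism, and the polyvector version follows identically. Equivalently, one may forgo spectral sequence formalism and argue order by order in $u$: given a $Q_f$-cocycle $\sum_k \alpha_k u^k$, reduce $\alpha_0$ to a holomorphic log form using the previous corollary, then lift each $\alpha_k$ by solving $\bar\pa_f$-equations whose obstructions lie in the $E_2$ page; Theorem \ref{Hodge-to-de Rham} guarantees that these obstructions vanish.

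The main obstacle worth attention is strong convergence of the spectral sequence, since the filtration is not bounded above. This is settled by $u$-adic completeness together with the fact that at each fixed $u$-degree only finitely many filtration steps contribute, so the usual $\varprojlim^1$-type obstruction is trivial. Everything else is formal bookkeeping and carries over verbatim from differential forms to polyvector fields through conjugation by $\Omega$.
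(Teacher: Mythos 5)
Your proof is correct, and it takes a mildly different route from the one the paper implicitly intends. The paper states this as a ``similar result'' to the preceding corollary: the intended argument is to rerun the Dolbeault-resolution/hypercohomology argument with $[[u]]$-coefficients, i.e.\ filter $\A_{\log}(X)[[u]]$ by the antiholomorphic degree, use the exactness of $0 \to \Omega^p_{X^\dag/0^\dag} \to \A^{p,0}_{\log} \to \A^{p,1}_{\log} \to \cdots$ (Theorem \ref{Dolbeault_resolution}) together with affineness of $X$ to collapse the $\bar\pa$-direction, and read off $(\Omega^\bullet_{X^\dag/0^\dag}(X)[[u]], \td f\wedge + u\pat)$ on the surviving row. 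You instead filter $u$-adically and bootstrap from the already-established $u=0$ comparison via the complete-filtration comparison theorem; this buys you the ability to quote the previous corollary as a black box, at the cost of having to verify convergence (which you do correctly: the filtration is complete and exhaustive, the towers $A/F^pA$ have surjective transition maps, so the Milnor sequence plus induction on $p$ settles the $\varprojlim^1$ issue), and of needing the slightly stronger fact that the $u=0$ isomorphism is induced by the natural inclusion $(\Omega^\bullet_{X^\dag/0^\dag}(X),\td f\wedge)\hookrightarrow(\A_{\log}(X),\bar\pa_f)$ --- which is indeed how that corollary arises from the resolution, but is worth saying explicitly. One small correction to your closing aside: in the order-by-order variant the obstructions to lifting vanish already because the associated graded map is a quasi-isomorphism; invoking Theorem \ref{Hodge-to-de Rham} there is unnecessary (and would be slightly backwards, since that degeneration statement lives entirely on the holomorphic side and is logically independent of this comparison).
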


We will do integration on $X$ to define the so-called \emph{higher residue pairing}. Since $X$ is non-compact, we need a homotopy formula to transform an arbitrary smooth differential form into one with only compact support.
The following construction is a variation of that in \cite{LLS}.

Let $\rho = \rho(|z_1|^2,\cdots,|z_d|^2)$ be a smooth (real-valued) cut-off function with compact support such that $\rho \equiv 1$ in a neighbourhood of $0$.
For $1 \leq k \leq n$, write $g_k := \sum_{i=1}^d \langle \rho_i,e^k \rangle z_i = z_k + \sum_{l=n+1}^d a_{kl}z_l$, and define
\begin{equation}\label{V_f}
    V_f := \frac{1}{\sum_{k=1} g_k \bar g_k} \left( \sum_{k=1}^n \bar g_k \theta_k + \sum_{l=n+1}^d (\sum_{k=1}^n a_{kl} \bar g_k) \theta_l \right),
\end{equation}
where $\bar g_k \in \mathcal{A}^{0,0}(X)$. If we define a global frame 
\begin{equation}\label{eqn:defining_theta_tilde}
\tilde{\theta}_i:= \theta_i + \sum_{l=n+1}^d a_{il} \theta_l 
\end{equation}
of $\Theta_{X^\dag/0^\dag}^1$, we have $V_f = \frac{1}{\sum_{k=1} g_k \bar g_k} \left(\sum_{k=1}^n \bar g_k \tilde{\theta}_k \right)$. The zeros of $\sum_{k=1} g_k \bar g_k$ contain only the origin since $\Sigma$ is smooth. As $V_f$ satisfies the condition of (\ref{relation of log derivation}), $V_f$ is a relative log derivation on $X-0$ and it defines an operator acting on $\PV_{\log}(X-0)$.

\begin{lemma} \label{V_f identity}
\begin{equation*}
    [\{f,-\},V_f] = 1 \qquad \text{on } X-0.
\end{equation*}
\end{lemma}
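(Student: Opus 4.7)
The plan is to reduce the operator identity to the scalar identity $\{f, V_f\} = 1$ via the graded Leibniz rule, and then verify the latter by a direct calculation using the global log frame $\{\tilde\theta_k\}_{k=1}^n$ of $\Theta^1_{X^\dag/0^\dag}$. Since $\{f, -\}$ is an odd derivation on $\PV_{\log}(X-0)$ (it is interior product with $\td f$ after conjugation with $\Omega$, lowering polyvector degree by one) and $V_f\wedge$ is odd of degree $+1$, the graded Schouten Leibniz rule gives, for an arbitrary $\alpha$,
\[
[\{f,-\}, V_f\wedge]\alpha \;=\; \{f, V_f\wedge \alpha\} + V_f\wedge\{f,\alpha\} \;=\; \{f, V_f\}\cdot \alpha,
\]
so the lemma reduces to showing $\{f, V_f\} = 1$ on $X-0$.

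For the scalar computation, first I would check that $\{\tilde\theta_k\}_{k=1}^n$ and $\{\td z_i/z_i\}_{i=1}^n$ form dual global frames of $\Theta^1_{X^\dag/0^\dag}$ and $\Omega^1_{X^\dag/0^\dag}$, which is a routine verification using relations (\ref{relation of log differential}) and (\ref{relation of log derivation}). Combined with the expression
\[
\td f \;=\; \sum_{k=1}^n g_k \frac{\td z_k}{z_k}
\]
obtained in the proof of Theorem \ref{state space isomorphism}, this pairing yields $\{f, \tilde\theta_k\} = \td f(\tilde\theta_k) = g_k$. Next, because $\{f,-\}$ annihilates all smooth functions (the Schouten bracket of two polyvector-degree-$0$ elements vanishes, and $\bar g_k$ and $\sum_j g_j\bar g_j$ are smooth functions), applying Leibniz to $V_f = \sum_{k=1}^n (\bar g_k / |g|^2)\,\tilde\theta_k$ with $|g|^2 := \sum_j g_j \bar g_j$ gives
\[
\{f, V_f\} \;=\; \sum_{k=1}^n \frac{\bar g_k}{|g|^2}\, g_k \;=\; \frac{\sum_k |g_k|^2}{|g|^2} \;=\; 1,
\]
which is well-defined on $X - 0$ since $|g|^2$ vanishes only at the origin; this can be checked componentwise, exactly as in the proof of Theorem \ref{state space isomorphism}, using the smoothness of the fan $\Sigma$.

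I expect no serious obstacle. The only care needed is to confirm that $V_f$, as defined in (\ref{V_f}), does land in $\Theta^1_{X^\dag/0^\dag}$ on $X - 0$ rather than merely in $\Theta^1_{X^\dag}$; this is precisely why the coefficients of $\theta_l$ for $l > n$ in (\ref{V_f}) take the specific form $\sum_{k=1}^n a_{kl}\bar g_k$, since setting $b_k = \bar g_k/|g|^2$ for $k \leq n$ and $b_l = \sum_{k=1}^n a_{kl}\bar g_k/|g|^2$ for $l > n$ is exactly what the constraint (\ref{relation of log derivation}) demands. Once this log-compatibility is verified, the rest is a mechanical application of the derivation property, and no subtlety arises from the singularities of $X$ because $\Theta^1_{X^\dag/0^\dag}$ is locally free on all of $X$.
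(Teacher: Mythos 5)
Your proof is correct and follows essentially the same route as the paper's: both reduce the operator identity, via the (anti)commutation of the odd contraction and wedge operators, to the scalar cancellation $\frac{1}{\sum_k g_k\bar g_k}\sum_k \bar g_k g_k = 1$, using that $\{f,-\}$ and $V_f$ are expressed through the dual frames $\{\td z_k/z_k\}$ and $\{\tilde\theta_k\}$. The only cosmetic difference is that the paper performs this duality component by component with the change-of-frame matrices $C_i$, while you work directly with the global frame, which is a harmless simplification.
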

\begin{proof}
    Let $B = (b_{ij})$ be the $n\times d$ matrix such that $b_{ij} = \delta_{ij}$ for $j\leq n$ and $b_{ij} = a_{ij}$ for $j >n+1$, and let $X_i$ be the component of $X$ where the generating rays of the corresponding cone are given by $\rho_{i_1},\cdots,\rho_{i_n}$.
    Denote by $C_i$ the matrix whose $k$-th column is the $i_k$-th column of $B$.
    Note that $C_i$ is invertible since $\Sigma$ is a smooth fan.
    We can write
    \begin{align*}
        \td f = \sum_{k=1}^n g_k \frac{\td z_k}{z_k} = \left(\frac{\td z_{i_1}}{z_{i_1}}, \cdots, \frac{\td z_{i_n}}{z_{i_n}}\right) (C_i)^{-1} (g_1,\cdots,g_n)^T.
    \end{align*}
    By conjugating with $\Omega$ and using the frame $(\tilde{\theta}_{i_1},\cdots , \tilde{\theta}_{i_n})  = (\tilde{\theta}_1,\cdots, \tilde{\theta}_n ) (C_i^T)^{-1}$,
    \begin{align} \label{eq: {f,-}}
        \{f,-\} = \left(\frac{\pa}{\pa \tilde{\theta}_{i_1}}, \cdots, \frac{\pa}{\pa \tilde{\theta}_{i_n}}\right) (C_i)^{-1} (g_1,\cdots,g_n)^T,
    \end{align}
    where $[\frac{\pa}{\pa \tilde{\theta}_{i_j}},\tilde{\theta}_{i_s} \wedge ] = \delta_{js}$ as operators. On the other hand, on $X_i - 0$ we can write
    \begin{align*}
        V_f = \frac{1}{\sum_{k=1} g_k \bar g_k} (\bar g_1,\cdots, \bar g_k) C_i (\tilde{\theta}_{i_1},\cdots, \tilde{\theta}_{i_n})^T.
    \end{align*}
    Hence $ [\{f,-\},V_f] = \frac{1}{\sum_{k=1} g_k \bar g_k} (\bar g_1,\cdots, \bar g_k) C_i (C_i)^{-1} (g_1,\cdots,g_n)^T = 1$.
\end{proof}

\begin{proposition} \label{prop: homotopy formula}
    The inclusion of complexes
    \begin{align*}
        (\PV_{\log,c}(X),\bar\pa_f) \hookrightarrow (\PV_{\log}(X),\bar\pa_f)
    \end{align*}
    is a quasi-isomorphism.
\end{proposition}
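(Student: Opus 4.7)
The plan is to build an explicit odd operator $\tilde T$ on $\PV_{\log}(X)$, using the log vector field $V_f$ of Lemma \ref{V_f identity}, satisfying
\begin{equation*}
\bar\pa_f \tilde T + \tilde T \bar\pa_f = \mathrm{Id} - \tilde K,
\end{equation*}
where $\tilde K$ has image in $\PV_{\log,c}(X)$. Granting this, a standard homological chase yields the quasi-isomorphism: surjectivity on cohomology follows from $\alpha - \tilde K\alpha = \bar\pa_f(\tilde T\alpha)$ for $\bar\pa_f$-closed $\alpha$, and injectivity from the fact that if $\alpha \in \PV_{\log,c}(X)$ equals $\bar\pa_f\gamma$ for some $\gamma \in \PV_{\log}(X)$, then applying the identity to $\gamma$ and using $\bar\pa_f^2 = 0$ rewrites $\alpha = \bar\pa_f(\tilde T\alpha + \tilde K\gamma)$ with the right-hand argument compactly supported.

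Starting with $T := (1-\rho)\,V_f \wedge (\cdot)$, which is globally defined on $\PV_{\log}(X)$ because $1-\rho$ kills the pole of $V_f$ at $0$, a direct calculation using graded Leibniz for $\bar\pa$ and $\{f,-\}$ together with $\{f, V_f\} = 1$ gives
\begin{equation*}
[\bar\pa_f, T]\alpha = (1-\rho)\alpha - \bar\pa\rho \wedge V_f \wedge \alpha + (1-\rho)\,\bar\pa V_f \wedge \alpha.
\end{equation*}
Write this as $[\bar\pa_f, T] = \mathrm{Id} - K - R$ where $K := \rho + \bar\pa\rho \wedge V_f \wedge(\cdot)$ has image in $\PV_{\log,c}(X)$ thanks to the compact supports of $\rho$ and $\bar\pa\rho$, while $R := -(1-\rho)\,\bar\pa V_f \wedge(\cdot)$ is a global error term.

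The crucial observation is that $R$ is nilpotent: wedging with $\bar\pa V_f \in \PV_{\log}^{1,1}$ raises the bidegree by $(1,1)$, so $R^{n+1} = 0$ on $\PV_{\log}(X)$. Moreover, $\bar\pa^2 = 0$ combined with the identity $\{f, \bar\pa V_f\} = 0$ (obtained by applying $\bar\pa$ to $\{f, V_f\} = 1$ and using $\bar\pa f = 0$) gives $[\bar\pa_f, \bar\pa V_f \wedge] = 0$, whence
\begin{equation*}
[\bar\pa_f, R] = \bar\pa\rho \wedge \bar\pa V_f \wedge (\cdot)
\end{equation*}
also has compactly supported image. Setting $S := \sum_{k=0}^{n} R^k$, the telescoping identity $(\mathrm{Id} - R)S = \mathrm{Id}$ and graded Leibniz for $[\bar\pa_f, -]$ yield
\begin{equation*}
[\bar\pa_f,\, T S] = \mathrm{Id} - K S - T\,[\bar\pa_f, S].
\end{equation*}
We then take $\tilde T := T S$ and $\tilde K := K S + T[\bar\pa_f, S]$; each summand of $\tilde K$ factors through either $K$ or $[\bar\pa_f, R]$ (both with compactly supported image) and is post-composed with multiplication operators $T$, $R$ that preserve compact support.

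The main obstacle will be sign bookkeeping in the bigraded algebra $\PV_{\log}$. The cancellation that produces the clean formula for $[\bar\pa_f, T]$ hinges on the Koszul sign $(-1)^{|V_f|} = -1$ in graded Leibniz for both $\bar\pa$ and $\{f,-\}$, which makes the potentially doubled $V_f \wedge \bar\pa\alpha$ and $V_f \wedge \{f,\alpha\}$ contributions coming from $\bar\pa_f T\alpha$ and $T\bar\pa_f\alpha$ cancel rather than reinforce; analogous sign-counting underlies the vanishing $[\bar\pa_f, \bar\pa V_f \wedge] = 0$. Once these conventions are pinned down, the remaining verifications are routine.
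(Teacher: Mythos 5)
Your proposal is correct and takes essentially the same route as the paper: both construct the homotopy from the cutoff contraction $(1-\rho)V_f\wedge$ combined with a finite geometric series inverting the nilpotent error $\bar\pa V_f\wedge$ (the paper packages this as $R_\rho=(1-\rho)V_f\tfrac{1}{1+[\bar\pa,V_f]}$ and $T_\rho=\rho+(\bar\pa\rho)V_f\tfrac{1}{1+[\bar\pa,V_f]}$ with $[\bar\pa_f,R_\rho]=1-T_\rho$, citing \cite{LLS}), so that every class is represented in the image of an operator with compactly supported image. Your two-step version with $\tilde T=TS$ and $\tilde K=KS+T[\bar\pa_f,S]$ is just an expanded form of the same identity, and your concluding homological chase spells out what the paper leaves implicit.
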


\begin{proof}
    Define another two operators:
    \begin{equation*}
        T_{\rho} := \rho + (\bar{\pat} \rho)V_f \frac{1}{1+[\bar{\pat},V_f]},\quad R_{\rho} := (1-\rho)V_f \frac{1}{1+[\bar{\pat},V_f]}.
    \end{equation*}
    It is clear that they both act on $\PV_{\log}(X)$.
    As in \cite{LLS}, we have the following identity
    \begin{align*}
        [\bar\pa_f,R_{\rho}] = 1 - T_{\rho}.
    \end{align*}
    So $T_\rho$ gives the quasi-isomorphism.
\end{proof}

When the formal parameter $u$ is present, we have a similar result:
\begin{proposition} \label{homotopy formula2}
    The inclusion of complexes
    \begin{align*}
        (\PV_{\log,c}(X)[[u]],Q_f) \hookrightarrow (\PV_{\log}(X)[[u]],Q_f)
    \end{align*}
    is a quasi-isomorphism.
\end{proposition}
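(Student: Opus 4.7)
The strategy is to extend the homotopy $R_\rho$ from the proof of Proposition \ref{prop: homotopy formula} so that it intertwines the perturbed differential $Q_f = \bar\pa_f + u\pat$. Since everything is $u$-adically complete, the extension can be carried out by a geometric series ansatz, which amounts to a concrete incarnation of the homological perturbation lemma.

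First I would note the compatibility $[\bar\pa_f, u\pat] = 0$ (as a graded commutator). This follows from $Q_f^2 = 0$ together with $\bar\pa_f^2 = 0$ and $\pat^2 = 0$, which forces the cross terms $\{\bar\pat, \pat\}$ and $\{\{f,-\}, \pat\}$ to vanish. This commutation is the key algebraic input that makes the geometric series work cleanly.

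Next I would define the perturbed homotopy and projector
\begin{align*}
    R_\rho^u &:= R_\rho \,(1 + u\pat\, R_\rho)^{-1} \;=\; \sum_{k\geq 0} R_\rho(-u\pat\, R_\rho)^k,\\
    T_\rho^u &:= T_\rho \,(1 + u\pat\, R_\rho)^{-1} \;=\; \sum_{k\geq 0} T_\rho(-u\pat\, R_\rho)^k,
\end{align*}
each of which is $u$-adically convergent and therefore a well-defined $\C[[u]]$-linear operator on $\PV_{\log}(X)[[u]]$. The verification then splits into two claims. First, the homotopy identity
$$[Q_f, R_\rho^u] \;=\; 1 - T_\rho^u$$
holds: expanding using the graded Leibniz rule, the piece $[\bar\pa_f, R_\rho(1+u\pat R_\rho)^{-1}]$ produces $(1 - T_\rho)(1+u\pat R_\rho)^{-1}$ plus an internal correction, and this correction is exactly canceled against $[u\pat, R_\rho^u]$ thanks to the vanishing $[\bar\pa_f, u\pat] = 0$ from the previous step and the original identity $[\bar\pa_f, R_\rho] = 1 - T_\rho$. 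Second, $T_\rho^u$ has image in $\PV_{\log,c}(X)[[u]]$: the outermost factor in each term of $T_\rho^u$ is $T_\rho$, whose image is compactly supported because of the cutoff factors $\rho$ and $\bar\pat\rho$ in its definition.

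Given these two claims, the inclusion $(\PV_{\log,c}(X)[[u]], Q_f) \hookrightarrow (\PV_{\log}(X)[[u]], Q_f)$ has $T_\rho^u$ as a chain-level homotopy inverse (a parallel identity holds after restricting to $\PV_{\log,c}(X)[[u]]$), which gives the desired quasi-isomorphism. The main obstacle is the bookkeeping in the homotopy identity, since the signs and orderings from the graded Leibniz rule need to be tracked carefully; everything hinges on the $Q_f$-invariance of $u\pat$ established at the outset. As a sanity check, and as an alternative route, one may instead observe that both complexes are complete with respect to the $u$-adic filtration and that the associated graded of the inclusion is precisely the quasi-isomorphism of Proposition \ref{prop: homotopy formula} (because $u\pat$ raises the $u$-degree by one and therefore acts as zero on the graded pieces); a standard complete filtered-complex argument then promotes this to a quasi-isomorphism of the total complexes.
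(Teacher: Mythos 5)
Your main argument has a genuine gap: the claimed homotopy identity $[Q_f,R_\rho^u]=1-T_\rho^u$ with $T_\rho^u:=T_\rho(1+u\pat R_\rho)^{-1}$ is false. Writing $h=R_\rho$, $\delta=u\pat$ and using only $[\bar\pa_f,h]=1-T_\rho$ and $[\bar\pa_f,\delta]=0$, a direct computation gives
\[
[Q_f,\;h(1+\delta h)^{-1}]\;=\;1-(1+h\delta)^{-1}\,T_\rho\,(1+\delta h)^{-1},
\]
so your right-hand side is missing the factor $(1+h\delta)^{-1}$ on the left of $T_\rho$. Already at first order in $u$ the discrepancy is (up to sign) the operator $R_\rho\,\pat\,T_\rho$, and your identity would hold only under a side condition of homological-perturbation type, $R_\rho\,\pat\,T_\rho=0$, which fails here: $T_\rho\alpha$ is supported on $\operatorname{supp}\rho$, not merely on the region where $\rho\equiv 1$, so the factor $(1-\rho)$ in $R_\rho$ does not annihilate $\pat T_\rho\alpha$. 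The error is repairable, since the corrected operator $(1+R_\rho u\pat)^{-1}T_\rho(1+u\pat R_\rho)^{-1}$ still has compactly supported image ($R_\rho$, being wedge/multiplication by fixed sections, and $\pat$ do not increase supports), but as written the key identity, and hence the claim that your $T_\rho^u$ is the relevant projector, is wrong. The paper sidesteps this bookkeeping entirely: rather than perturbing $R_\rho$, it reruns the construction of Proposition \ref{prop: homotopy formula} with $\bar\pat$ replaced by $Q=\bar\pat+u\pat$, setting $T_\rho^u=\rho+(Q\rho)V_f\tfrac{1}{1+[Q,V_f]}$ and $R_\rho^u=(1-\rho)V_f\tfrac{1}{1+[Q,V_f]}$, and verifies $[Q_f,R_\rho^u]=1-T_\rho^u$ directly from Lemma \ref{V_f identity}.

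Your fallback argument, on the other hand, is sound and genuinely different from the paper's: the $u$-adic filtration is complete and exhaustive, $u\pat$ raises filtration degree, the associated graded of the inclusion is a direct sum of copies of the quasi-isomorphism of Proposition \ref{prop: homotopy formula}, and acyclicity of the cone follows from acyclicity of each finite-order quotient together with a $\varprojlim^1$ (Milnor sequence) argument, the transition maps being surjective; you should spell out that last completeness step rather than calling it standard. Note, however, that this route produces no explicit homotopy operator, whereas the paper's $T_\rho^u$ is used later to define the higher residue map and pairing, so an explicit construction (either the paper's, or your geometric series with the corrected projector) is still needed downstream.
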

\begin{proof}
    Defining $Q:=\bar\pat+u\pat$ and
    \begin{align*}
        T_{\rho}^u := \rho + (Q \rho)V_f \frac{1}{1+[Q,V_f]},\quad
        R_{\rho}^u := (1-\rho)V_f \frac{1}{1+[Q,V_f]},
    \end{align*}
    one finds
    \begin{align*}
        [Q_f,R_{\rho}^u] = 1 - T_{\rho}^u.
    \end{align*}
    Now $T_\rho^u$ gives the desired quasi-isomorphism. 
\end{proof}
Similarly we can define $T_{\rho}^u$ and $R_{\rho}^u$ acting on $(\A_{\log}(X)[[u]],Q_f)$. 

\begin{corollary} \label{homotopy formula for PV}
    The inclusions of complexes
    \begin{align*}
        (\A_{\log,c}(X),\bar\pa_f) \hookrightarrow& (\A_{\log}(X),\bar\pa_f),\\
        (\A_{\log,c}(X)[[u]],Q_f) \hookrightarrow& (\A_{\log}(X)[[u]],Q_f)
    \end{align*}
    are quasi-isomorphisms.
\end{corollary}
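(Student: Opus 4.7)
The plan is to deduce both quasi-isomorphisms for differential forms directly from the already-established quasi-isomorphisms for polyvector fields (Propositions \ref{prop: homotopy formula} and \ref{homotopy formula2}) by transporting them through the global log volume form $\Omega$. This matches the spirit of the Remark immediately after Theorem \ref{Hodge-to-de Rham}.

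First I would set up the contraction isomorphism. Since $\Omega \in \Omega^n_{X^\dag/0^\dag}$ is a nowhere-vanishing global section, contraction $\theta \mapsto \theta \vdash \Omega$ gives an $\mathcal{O}_X$-linear isomorphism $\Theta^k_{X^\dag/0^\dag} \xrightarrow{\sim} \Omega^{n-k}_{X^\dag/0^\dag}$. Tensoring over $\mathcal{O}_X$ with $\A^{0,j}$ yields an $\A^{0,j}$-linear isomorphism $\PV^{k,j}_{\log}(X) \xrightarrow{\sim} \A^{n-k,j}_{\log}(X)$, and taking the direct sum produces a $\C$-linear isomorphism $\Psi : \PV_{\log}(X) \xrightarrow{\sim} \A_{\log}(X)$. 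Because $\Omega$ is a fixed global section and multiplication by smooth functions preserves compact support, $\Psi$ restricts to an isomorphism $\PV_{\log,c}(X) \xrightarrow{\sim} \A_{\log,c}(X)$. Extending $u$-linearly, the same holds with $[[u]]$.

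Next I would verify that $\Psi$ intertwines the differentials. Since $\Omega$ is $\bar\pa$-closed, $\Psi$ commutes with $\bar\pa$. For any log derivation $\theta$, the identity $\{f,\theta\}\vdash \Omega = \td f \wedge (\theta \vdash \Omega)$ follows from $\td f \wedge \Omega = 0$ (a top-degree form vanishes) together with the Cartan-type contraction identity, and extends by the graded Leibniz rule on $\PV_{\log}$. The BV operator $\pa$ on polyvector fields is defined precisely so that $(\pa \theta)\vdash \Omega = \pa(\theta \vdash \Omega)$, which intertwines it with the holomorphic de Rham differential. Combining these, $\Psi$ intertwines $\bar\pa_f$ with $\bar\pa_f$ and $Q_f = \bar\pa_f + u\pa$ with $Q_f$. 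Consequently the inclusions
\begin{align*}
    (\A_{\log,c}(X),\bar\pa_f) &\hookrightarrow (\A_{\log}(X),\bar\pa_f),\\
    (\A_{\log,c}(X)[[u]],Q_f) &\hookrightarrow (\A_{\log}(X)[[u]],Q_f)
\end{align*}
are conjugate via $\Psi$ to the corresponding inclusions for polyvector fields, so they are quasi-isomorphisms.

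There is essentially no obstacle here; the only point requiring a line of care is confirming that the BV-type $\pa$ on $\PV_{\log}$ is defined (or can be taken) so that $\Psi \circ \pa = \pa \circ \Psi$, which is the usual convention in the Calabi-Yau setup and is consistent with how $Q_f$ was introduced on both sides in Section \ref{state space isom}. A direct alternative, if one prefers not to invoke $\Psi$, would be to repeat the construction of Propositions \ref{prop: homotopy formula} and \ref{homotopy formula2} on $\A_{\log}(X)$ with interior product $\iota_{V_f}$ replacing wedge multiplication by $V_f$: Lemma \ref{V_f identity} translates into $[\td f \wedge, \iota_{V_f}] = 1$ on $X - 0$, and the same homotopy operators $T_\rho, R_\rho$ (and their $u$-versions) yield $[\bar\pa_f, R_\rho] = 1 - T_\rho$ on forms by an identical calculation.
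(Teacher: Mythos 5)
Your argument is correct, but your primary route differs from the paper's. The paper does not transport the polyvector-field statement through $\Omega$; instead, immediately after Propositions \ref{prop: homotopy formula} and \ref{homotopy formula2} it remarks that the operators $T_\rho^u$ and $R_\rho^u$ can ``similarly'' be defined acting on $(\A_{\log}(X)[[u]],Q_f)$ — that is, it reruns the homotopy construction on forms, with $V_f$ acting by contraction so that Lemma \ref{V_f identity} becomes $[\td f\wedge,\iota_{V_f}]=1$ on $X-0$ and the identity $[\bar\pa_f,R_\rho]=1-T_\rho$ (resp.\ $[Q_f,R_\rho^u]=1-T_\rho^u$) holds verbatim on $\A_{\log}(X)$. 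This is exactly the ``direct alternative'' you sketch at the end. Your main route — conjugating the inclusion by the isomorphism $\Psi=(\cdot)\vdash\Omega$ — is also sound and in fact almost tautological here, because the paper defines $\{f,-\}$ and the operator $\pa$ on $\Theta^\bullet_{X^\dag/0^\dag}$ precisely by conjugation with $\Omega$, and $\Psi$ is an $\mathcal{O}_X$-linear isomorphism of locally free sheaves, hence commutes with $\bar\pa$, is compatible with the $\A^{0,j}$-module structure, and preserves (in both directions) compact supports. What your route buys is that no second homotopy construction is needed: the form-side statement is literally the PV-side statement in disguise. What the paper's route buys is the explicit operators $T_\rho^u$, $R_\rho^u$ on $\A_{\log}(X)$ themselves, which are used later (e.g.\ in the definition of the higher residue pairing and its family version in Sections \ref{integration} and \ref{subsec:GM_connection_and_higher_residue}), so having them defined directly on forms is not redundant. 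Either way the corollary follows; your one caveat about the convention for $\pa$ on $\PV_{\log}$ is settled by the paper's own definition-by-conjugation, so there is no gap.
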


\subsection{Integration and the higher residue pairing}\label{integration}

Now let us define the integral of $\phi \in \A^{p,q}_{\log,c}(X)$ on $X$.
Since $\bigwedge_{i=1}^n (\frac{\td z_i}{z_i} \wedge \td \bar z_i)$ is locally integrable around $0 \in \C^n$, one can define
\begin{align*}
    \int_X \phi :=
    \left\{
        \begin{array}{cc}
           \sum_i \int_{X_i} \phi_i,  &  (p,q) = (n,n);\\
           0,  & (p,q) \neq (n,n).
        \end{array}
    \right.
\end{align*}
Here the summation is over all the $n$-dimensional components $X_i$ of $X$ and $\varphi_i$ is the unique representative of $\varphi$ on $X_i$.
The following lemma is of fundamental importance in the sequel.

\begin{lemma} \label{Vanishing of integral}
    Let $\phi \in \A^{n-1,n}_{\log,c}(X)$ and $\psi \in \A^{n,n-1}_{\log,c}(X)$. Then we have
    \begin{align*}
        \int_X \pa \phi = \int_X \bar\pa \psi = 0.
    \end{align*}
\end{lemma}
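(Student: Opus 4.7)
The integral on $X$ is defined componentwise, so it suffices to prove for each $n$-dimensional component $X_i \cong \C^n$ (with coordinates $z_1, \ldots, z_n$) that $\int_{X_i} \pa \phi = 0$ and $\int_{X_i} \bar\pa \psi = 0$. Since $\phi$ has bidegree $(n-1, n)$, the form $\bar\pa \phi$ has antiholomorphic degree $n+1$ and hence vanishes, so $\pa \phi = \td \phi$ as a real top-degree form; similarly $\bar\pa \psi = \td \psi$. The task is therefore to establish a logarithmic Stokes theorem on the noncompact chart $X_i$, and my plan is to approximate by truncating away the log divisors.

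Choose a smooth cutoff $\chi_\epsilon(z) = \chi(|z_1|^2/\epsilon^2, \ldots, |z_n|^2/\epsilon^2)$ that vanishes in a neighborhood of $\bigcup_j \{z_j = 0\}$ and equals $1$ outside a slightly larger neighborhood. Then $\chi_\epsilon \phi$ is smooth and compactly supported on the open torus $(\C^*)^n \subset X_i$, and extends by zero to a smooth compactly supported form on $X_i$. Standard Stokes then gives $\int_{X_i} \td(\chi_\epsilon \phi) = 0$; rearranging,
\begin{equation*}
\int_{X_i} \chi_\epsilon \, \pa \phi = - \int_{X_i} \pa \chi_\epsilon \wedge \phi.
\end{equation*}
The strategy is to send $\epsilon \to 0$ and analyze each side separately.

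For the left-hand side, switching to polar coordinates $z_j = r_j e^{i \theta_j}$, the elementary identity $\frac{\td z_j}{z_j} \wedge \td \bar z_j = -2i\, e^{-i \theta_j}\, \td r_j \wedge \td \theta_j$ shows that $\pa \phi$ is, up to sign, a bounded smooth function times $\prod_j \frac{\td z_j}{z_j} \wedge \td \bar z_j$; in particular it is locally integrable, and dominated convergence yields $\int_{X_i} \chi_\epsilon \, \pa \phi \to \int_{X_i} \pa \phi$. For the right-hand side, $\pa \chi_\epsilon$ is supported on the thin shell $\{|z_j| \in (\epsilon, 2\epsilon) \text{ for some } j\}$, and a direct polar-coordinate computation uses the fact that $\phi$ carries the full antiholomorphic factor $\td \bar z_1 \wedge \cdots \wedge \td \bar z_n$ to rewrite $\pa \chi_\epsilon \wedge \phi$ as a bounded form whose support has $r$-measure $O(\epsilon)$. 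The integral is therefore $O(\epsilon)$ and vanishes in the limit.

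The identity $\int_X \bar\pa \psi = 0$ follows by the symmetric argument, with $\psi$ now carrying the full holomorphic factor $\bigwedge_j \frac{\td z_j}{z_j}$ which plays the role of the antiholomorphic factor above. The main technical point in both cases is the uniform boundedness of $\frac{\td z_j}{z_j} \wedge \td \bar z_j$ in polar coordinates, which simultaneously supplies the dominating function for the left-hand side and controls the shell estimate for the right-hand side; this is essentially the same local integrability statement that underlies the very definition of $\int_X$.
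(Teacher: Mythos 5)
Your argument for $\int_X \pa\phi = 0$ is fine, and it in fact proves the stronger componentwise statement $\int_{X_i}\pa\phi_i = 0$: after wedging with $\pa\chi_\epsilon$, the surviving term carries $\td z_k$ in exactly the direction where $\phi$ is \emph{missing} its logarithmic factor $\frac{\td z_k}{z_k}$, so the shell integrand is $O(1/\epsilon)$ on an annulus of area $O(\epsilon^2)$ and the boundary term is $O(\epsilon)$. But the final step, "the identity $\int_X\bar\pa\psi = 0$ follows by the symmetric argument," is a genuine gap: the situation is not symmetric, and the componentwise statement $\int_{X_i}\bar\pa\psi_i = 0$ that your reduction requires is simply false. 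For $\psi \in \A^{n,n-1}_{\log,c}(X)$ the full holomorphic factor $\bigwedge_j\frac{\td z_j}{z_j}$ is present, so the term of $\bar\pa\chi_\epsilon\wedge\psi$ carrying $\td\bar z_k$ also carries $\frac{\td z_k}{z_k}$; on the shell $|z_k|\sim\epsilon$ the integrand is $O(1/\epsilon^2)$ over area $O(\epsilon^2)$, hence the boundary term is $O(1)$ and converges to $-2\pi\sqrt{-1}$ times the integrals of the Poincar\'e residues of $\psi_i$ over the divisors $D_{ij} = X_i\cap X_j$. Already in the one-variable model this is visible: for $\psi_i = \rho(|z|^2)\frac{\td z}{z}$ with $\rho$ a cutoff equal to $1$ near the origin, $\int_{\C}\bar\pa\psi_i = -2\pi\sqrt{-1}\neq 0$.

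What makes the lemma true is precisely the cancellation that your proposal never invokes: the compatibility condition built into the definition of $\A^{n,n-1}_{\log,c}(X)$ (the restrictions of $\psi_i$ and $\psi_j$ to $D_{ij}$ agree under the relations \eqref{relation of log differential}) forces $\Res_{D_{ij}}\psi_i + \Res_{D_{ji}}\psi_j = 0$, so after summing over all $n$-dimensional components the residue contributions cancel in pairs. This is exactly how the paper argues, using the integration-by-parts formulae with residue correction terms from \cite{LRW} (equivalently Theorem \ref{Stokes formula 2}): the $\pa$-direction formula has no residue term, giving your first identity, while the $\bar\pa$-direction formula produces $\sum_i\sum_{j\in E_i}\int_{D_{ij}}\Res_{D_{ij}}\psi_i$, which vanishes only after the pairwise cancellation. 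To repair your proof you would keep your truncation scheme but compute the limit of $\int\bar\pa\chi_\epsilon\wedge\psi_i$ as a residue integral rather than claiming it is $O(\epsilon)$, and then globalize using this cancellation across adjacent components.
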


\begin{proof}
Denote by $\text{Sing}(X)$ the singular part of $X$ and define $D_i := X_i \cap \text{Sing}(X)$.
Denote by $E_i$ the set $\{j : \text{dim}(X_i \cap X_j)=n-1\}$ and write $D_{ij} = X_i \cap X_j$. Then $D_i = \cup_{j\in E_i} D_{ij}$ is a union of $(n-1)$-dimensional subspaces of $X_i$.
Note that $\alpha= (\alpha_i) \in \A^{p,q}_{\log,c}(X)$ implies that for any $i$, $\alpha_i \in \A^{0,q}(X_i,\Omega_{X_i}^p(\log D_i))$.
One can now define the Poincar\'e residue of $\alpha_i$ on $D_{ij}$, and in particular,
\begin{align*}
    \text{Res}_{D_{ij}} \alpha_i \in \A^{p-1,q}_{D_{i,j}}\left(\log \left(D_{i,j}\cap \bigcup_{q\in E_i,q\neq j} D_{i,q}\right)\right).
\end{align*}
In \cite[Section 2]{LRW}, the following formulae are proved:
\begin{align*}
    \int_{X_i} \pa \alpha_i \wedge \beta_i =& (-1)^{p+q+1} \int_{X_i} \alpha_i \wedge \pa \beta_i,\\
    \int_{X_i} \bar\pa \alpha_i \wedge \gamma_i =& (-1)^{p+q+1} \int_{X_i} \alpha_i \wedge \bar\pa \gamma_i - 2\pi \sqrt{-1} \sum_{j\in E_i} \int_{D_{ij}} \text{Res}_{D_{ij}} \alpha_i \wedge \iota^*\gamma_i,
\end{align*}
where  $\alpha_i \in \A_{X_i}^{p,q}(\log D_i), \beta_i \in \A^{n-p-1,n-q}(X_i),\gamma_i \in \A^{n-p,n-q-1}(X_i)$ and $\iota: D_{ij} \to X_i$ is the inclusion map.
Note that these formulae can also be deduced from Theorem \ref{Stokes formula 2}.

For $\alpha = \phi, \beta = 1$, where $1$ denotes the function that is equal to $1$ on each component, we get $\int_X \pa \phi =0$.
For $\alpha = \psi, \gamma = 1$, we denote by $E(\Delta)$ the set $\{(i,j) : i<j,\text{dim}(X_i \cap X_j)=n-1\}$, and then we have
\begin{align*}
    \int_X \bar\pa \psi
    =& -2\pi \sqrt{-1} \sum_i \sum_{j\in E_i} \int_{D_{ij}} \text{Res}_{D_{ij}} \psi_i \\
    =& -2\pi \sqrt{-1} \sum_{(i,j)\in E(\Delta)} \int_{D_{ij}} (\text{Res}_{D_{ij}} \psi_i + \text{Res}_{D_{ji}} \psi_j) \\
    =& 0.
\end{align*}
The last equality holds because $\text{Res}_{D_{ij}} \psi_i + \text{Res}_{D_{ji}} \psi_j=0$, which follows from the definition of $\A^{p,q}_{\log,c}(X)$.
\end{proof}

Integration on $X$ induces a pairing on $\A_{\log,c}(X)$ defined as $\abracket{\phi,\psi}_{\A} = \int_X \phi \wedge \psi$.
By applying Lemma \ref{Vanishing of integral}, we get the following
\begin{corollary}
    For $\phi \in \A^{p,q}_{\log,c}(X), \psi \in \A_{\log,c}(X)$, we have
    \begin{align*}
        \abracket{\pa\phi,\psi}_{\A} =& (-1)^{|\phi|+1} \abracket{\phi,\pa\psi}_{\A}, \\
         \abracket{\bar\pa\phi,\psi}_{\A} =& (-1)^{|\phi|+1} \abracket{\phi,\bar\pa\psi}_{\A}.
    \end{align*}
\end{corollary}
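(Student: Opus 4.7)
The plan is to reduce both identities directly to Lemma \ref{Vanishing of integral} via the graded Leibniz rule. Since the pairing $\abracket{\phi,\psi}_\A = \int_X \phi \wedge \psi$ picks up only the $(n,n)$-component, I may assume, by decomposing $\psi$ into its bidegree pieces, that $\phi \wedge \psi$ has bidegree exactly $(n-1,n)$ (for the first identity) or $(n,n-1)$ (for the second); in all other bidegrees both sides of each identity vanish trivially.

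First I would observe that on each top-dimensional component $X_i$, the operators $\pa$ and $\bar\pa$ satisfy the usual graded Leibniz rule
\begin{equation*}
    \pa(\phi \wedge \psi) = \pa\phi \wedge \psi + (-1)^{|\phi|} \phi \wedge \pa\psi, \qquad \bar\pa(\phi \wedge \psi) = \bar\pa\phi \wedge \psi + (-1)^{|\phi|} \phi \wedge \bar\pa\psi,
\end{equation*}
which is immediate from the local expressions of smooth log forms in terms of the frame $\frac{\td z_1}{z_1}, \ldots, \frac{\td z_n}{z_n}, \td \bar z_1, \ldots, \td \bar z_n$ on each $X_i$. Then I would verify that $\phi \wedge \psi$ again lies in $\A_{\log,c}(X)$: compact support is preserved since both factors have compact support, and the gluing compatibility along each codimension-one stratum $D_{ij}$ follows from the fact that the Poincaré residue is itself a derivation with respect to the wedge product, so the compatibility conditions on $\phi$ and $\psi$ imply the one for $\phi \wedge \psi$.

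Next I would apply Lemma \ref{Vanishing of integral} to $\phi \wedge \psi$, which has the appropriate bidegree $(n-1,n)$ or $(n,n-1)$, to conclude $\int_X \pa(\phi \wedge \psi) = 0$ (resp.\ $\int_X \bar\pa(\phi \wedge \psi) = 0$). Combining with the Leibniz rule yields
\begin{equation*}
    \abracket{\pa\phi,\psi}_\A = -(-1)^{|\phi|} \abracket{\phi,\pa\psi}_\A = (-1)^{|\phi|+1}\abracket{\phi,\pa\psi}_\A,
\end{equation*}
and analogously for $\bar\pa$, completing the proof. I do not anticipate any substantive obstacle here; the only point requiring a moment of care is the compatibility of the wedge product along the singular strata, which is really just the Leibniz rule for Poincaré residues.
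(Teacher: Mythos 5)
Your proposal is correct and is essentially the paper's intended argument: the corollary is stated as an immediate consequence of Lemma \ref{Vanishing of integral}, obtained exactly by applying the graded Leibniz rule for $\pa$ and $\bar\pa$ to $\phi\wedge\psi$ and integrating. The only superfluous point is your appeal to Poincar\'e residues to justify that $\phi\wedge\psi\in\A_{\log,c}(X)$ — closure under wedge is automatic since $\A^{i,j}_{\log}=\A^{0,j}\otimes_{\mathcal{O}_X}\Omega^i_{X^\dag/0^\dag}$ is a sheaf-level construction and the paper has already noted these spaces form super-commutative algebras.
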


With the global nowhere vanishing form $\Omega$ at hand, we can define the trace of polyvector fields. Given $\alpha \in \PV_{\log,c}(X)$, define its \emph{trace} to be
\begin{align*}
    \Tr \alpha := \int_X (\alpha \vdash \Omega) \wedge \Omega.
\end{align*}
Note that $\Tr \alpha$ vanishes unless $\alpha$ is of bidegree $(n,n)$.
\begin{proposition} \label{Vanishing of trace}
    For $\alpha \in \PV_{\log,c}(X)$, we have
    \begin{align*}
        \Tr (\bar\pa \alpha) = \Tr (\pa \alpha) = \Tr \{f, \alpha\} = 0.
    \end{align*}
\end{proposition}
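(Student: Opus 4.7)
The plan is to transfer each of the three identities to the side of log differential forms via the global nowhere vanishing form $\Omega$, where Lemma \ref{Vanishing of integral} applies directly. For $\alpha \in \PV_{\log,c}(X)$, set $\tilde\alpha := \alpha \vdash \Omega \in \A_{\log,c}(X)$, so that $\Tr \alpha = \int_X \tilde\alpha \wedge \Omega$. As explained in Section \ref{state space isom}, the operators $\{f,-\}$ and $\pa$ on log polyvectors are obtained from $\td f \wedge$ and $\pa$ on log forms by conjugation with $\Omega$; combined with the fact that $\Omega$ is holomorphic and of top log degree (so $\bar\pa \Omega = 0$ and $\pa \Omega = 0$), this gives the intertwining relations $(\bar\pa \alpha) \vdash \Omega = \bar\pa \tilde\alpha$, $(\pa \alpha) \vdash \Omega = \pa \tilde\alpha$, and $\{f,\alpha\} \vdash \Omega = \pm\, \td f \wedge \tilde\alpha$.

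The $\{f,\alpha\}$ identity is then immediate: after graded commutation the integrand equals $\pm\, \tilde\alpha \wedge \td f \wedge \Omega$, and $\td f \wedge \Omega$ lies in $\Omega^{n+1}_{X^\dag / 0^\dag} = 0$, so it vanishes pointwise even before integrating. For $\pa \alpha$, using $\pa \Omega = 0$ I would rewrite $\pa \tilde\alpha \wedge \Omega = \pa(\tilde\alpha \wedge \Omega)$; only the bidegree-$(n,n-1)$ piece of $\tilde\alpha \wedge \Omega$ contributes, and it is compactly supported, so the first identity of Lemma \ref{Vanishing of integral} gives $\Tr(\pa \alpha) = 0$. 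The $\bar\pa \alpha$ case is parallel, using $\bar\pa \Omega = 0$ to rewrite $\bar\pa \tilde\alpha \wedge \Omega = \pm\, \bar\pa(\tilde\alpha \wedge \Omega)$ and invoking the second identity of Lemma \ref{Vanishing of integral}.

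There should be no serious obstacle. The only point requiring care is the bookkeeping of the three intertwining relations, which is essentially the defining property of $\pa$ and $\{f,-\}$ on log polyvectors as the $\Omega$-conjugates of their differential-form counterparts. The graded-commutation signs that arise when moving $\pa$ or $\bar\pa$ across $\wedge\,\Omega$ play no role since the conclusion is vanishing.
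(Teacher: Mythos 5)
Your proposal is correct and follows essentially the paper's own argument: the $\bar\pa$ case is handled identically (rewrite $(\bar\pa\alpha \vdash \Omega)\wedge\Omega = \bar\pa((\alpha\vdash\Omega)\wedge\Omega)$ and apply Lemma \ref{Vanishing of integral}), and your pointwise vanishing of $\td f\wedge\Omega \in \Omega^{n+1}_{X^\dag/0^\dag}=0$ is the same degree reasoning the paper uses. The only differences are cosmetic: for $\Tr(\pa\alpha)$ the paper observes directly that $\pa\alpha$ (like $\{f,\alpha\}$) has no bidegree-$(n,n)$ component, so the integrand already vanishes pointwise and Lemma \ref{Vanishing of integral} is not needed there; also note a small bookkeeping slip in your write-up — the piece relevant to the $\pa$ case is the $(n-1,n)$ component (matching the first identity of the lemma), while $(n,n-1)$ is the one relevant to the $\bar\pa$ case — which does not affect the validity of the argument.
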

\begin{proof}
    We have $\Tr (\bar\pa \alpha) = \int_X (\bar\pa \alpha  \vdash \Omega) \wedge \Omega = \int_X \bar\pa ((\alpha \vdash \Omega) \wedge \Omega) = 0$. $\Tr (\pa \alpha)$ and $\Tr \{f, \alpha\}$ vanish because $\pa \alpha$ and $\{f, \alpha\}$ have no nonzero components of bidegree $(n,n)$.
\end{proof}

Similarly, this trace map induces a pairing on polyvector fields by defining $$\abracket{\alpha,\beta}_{\PV} =: \Tr (\alpha \cdot \beta)$$
for $\alpha,\beta \in \PV_{\log,c}(X)$. An explicit computation in coordinates gives the following lemma.

\begin{lemma}
    Assume $\alpha \in \PV^{p,q}_{\log}(X), \beta \in \PV^{n-p,n-q}_{\log}(X)$. Then
    \begin{align*}
        ((\alpha \cdot \beta) \vdash \Omega) \wedge \Omega = (-1)^{n(q+1)+p(n+1)} (\alpha \vdash \Omega) \wedge (\beta \vdash \Omega).
    \end{align*}
\end{lemma}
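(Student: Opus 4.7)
Both sides of the identity are $\mathcal{A}^{0,0}(X)$-bilinear in $\alpha$ and $\beta$ and are sections of $\mathcal{A}^{n,n}_{\log}(X)$, so the problem is purely local and tensorial. My plan is to reduce the identity to a pointwise monomial calculation on a single $n$-dimensional component of $X$ and then carry out a direct Koszul-sign bookkeeping.

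On an $n$-dimensional component $X_i \subset X$, the sheaves $\Theta^1_{X^\dag/0^\dag}$ and $\Omega^1_{X^\dag/0^\dag}$ admit dual frames $\theta_{i_1},\ldots,\theta_{i_n}$ and $\omega_k := \frac{d z_{i_k}}{z_{i_k}}$ with $\Omega|_{X_i} = \omega_1 \wedge \cdots \wedge \omega_n$. By bilinearity, it suffices to verify the identity on basis elements
\[
\alpha = \bar\eta \otimes \theta_K, \qquad \beta = \bar\mu \otimes \theta_L,
\]
with $\bar\eta \in \mathcal{A}^{0,q}$, $\bar\mu \in \mathcal{A}^{0,n-q}$, $K, L \subseteq \{1,\ldots,n\}$ of sizes $p$ and $n-p$ respectively, and $\theta_K,\theta_L$ the corresponding ordered wedges. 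If $K \cap L \neq \emptyset$ then $\theta_K \wedge \theta_L = 0$ and the LHS vanishes, while for any $i \in K \cap L$ both $\theta_K \vdash \Omega$ and $\theta_L \vdash \Omega$ omit the factor $\omega_i$, making their wedge product vanish. Hence one may assume $K \sqcup L = \{1,\ldots,n\}$; let $\epsilon(K,L) \in \{\pm 1\}$ denote the sign of the shuffle sorting $K \cup L$ into increasing order.

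For the LHS I would write $\alpha \cdot \beta = (-1)^{p(n-q)}\,\bar\eta \wedge \bar\mu \otimes \theta_K \wedge \theta_L$ (the sign $(-1)^{p(n-q)}$ being the Koszul sign from moving the $p$-vector $\theta_K$ past the $(0,n-q)$-form $\bar\mu$) and use $(\theta_1 \wedge \cdots \wedge \theta_n) \vdash \Omega = (-1)^{n(n-1)/2}$ to conclude
\[
\big((\alpha\cdot\beta)\vdash\Omega\big)\wedge\Omega = (-1)^{p(n-q) + n(n-1)/2}\,\epsilon(K,L)\,\bar\eta\wedge\bar\mu\wedge\Omega.
\]
For the RHS, the elementary contraction rules $\theta_k \vdash \omega_k = 1$ and $\theta_k \vdash \omega_j = 0$ for $k\neq j$ yield
\[
\theta_K \vdash \Omega = \epsilon(K,L)(-1)^{p(p-1)/2}\,\omega_L, \qquad \theta_L \vdash \Omega = \epsilon(L,K)(-1)^{(n-p)(n-p-1)/2}\,\omega_K,
\]
and, after moving $\bar\mu$ past $\omega_L$ (which costs $(-1)^{(n-p)(n-q)}$) and using $\omega_L \wedge \omega_K = \epsilon(L,K)\,\Omega$, one obtains
\[
(\alpha\vdash\Omega)\wedge(\beta\vdash\Omega) = (-1)^{p(p-1)/2 + (n-p)(n-p-1)/2 + (n-p)(n-q)}\,\epsilon(K,L)\,\bar\eta\wedge\bar\mu\wedge\Omega.
\]
Comparing the two, the shuffle signs $\epsilon(K,L)$ cancel and the ratio of the exponents reduces, by the parity identities $p^2 \equiv p \pmod 2$ and $p(p-1)+(n-p)(n-p-1) - n(n-1) \equiv 2p(n-p) - 2n \pmod 4$, to exactly $n(q+1) + p(n+1) \pmod 2$, which is the claimed sign.

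The only real difficulty is the sign bookkeeping: three independent gradings are in play (holomorphic form degree, antiholomorphic form degree, and polyvector-field degree), and one must consistently fix the Koszul rule used to define the polyvector product as well as the convention for extending $\vdash$ from $\Theta^\bullet$ to $\PV^{\bullet,\bullet}$. With the conventions of Sections~\ref{homotopy formula} and~\ref{integration} in place, the argument becomes a straightforward, if slightly tedious, monomial calculation; as a sanity check I would verify the degenerate cases $\alpha = 1$, $\beta \in \PV^{n,n}_{\log}$ and $p = q = 0$, $n = 1$ separately, where the sign $(-1)^{n(q+1)}$ is already visibly necessary.
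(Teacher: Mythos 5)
Your proposal is correct and follows essentially the paper's own route: the paper offers no argument beyond the remark that an explicit computation in coordinates gives the lemma, and your reduction to monomials $\bar\eta\otimes\theta_K$, $\bar\mu\otimes\theta_L$ with $K\sqcup L=\{1,\dots,n\}$ together with the Koszul-sign bookkeeping is exactly that computation; the exponent comparison does reduce to $n(q+1)+p(n+1)$ modulo $2$ via $\tbinom{n}{2}-\tbinom{p}{2}-\tbinom{n-p}{2}=p(n-p)$, and your contraction ordering (the one giving $(\theta_1\wedge\cdots\wedge\theta_n)\vdash\Omega=(-1)^{n(n-1)/2}$) is precisely the convention that reproduces the paper's sign, whereas the reverse ordering would yield $(-1)^{n(q+1)}$. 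The only blemish is the parenthetical congruence $p(p-1)+(n-p)(n-p-1)-n(n-1)\equiv 2p(n-p)-2n \pmod{4}$, which fails for odd $n$; the exact identity is $p(p-1)+(n-p)(n-p-1)-n(n-1)=-2p(n-p)$, which gives the same final parity, so the argument is unaffected.
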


\begin{proposition} \label{Adjoints of operators}
    For any $\alpha,\beta \in \PV_{\log,c}(X)$, we have
    \begin{align*}
        \abracket{\bar\pa \alpha, \beta}_{\PV} =& -(-1)^{|\alpha|}\abracket{\alpha,\bar\pa\beta}_{\PV}, \\
        \abracket{\{f,\alpha\}, \beta}_{\PV} =& -(-1)^{|\alpha|}\abracket{\alpha,\{f,\beta\}}_{\PV}, \\
        \abracket{\pa \alpha, \beta}_{\PV} =& (-1)^{|\alpha|}\abracket{\alpha,\pa\beta}_{\PV}.
    \end{align*}
\end{proposition}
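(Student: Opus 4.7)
The plan is to reduce each identity to a Stokes/trace-vanishing statement. The main tools are the graded Leibniz rule, Proposition \ref{Vanishing of trace} (vanishing of $\Tr$ on $\bar\pa$-, $\pa$-, and $\{f,-\}$-exact elements), and Lemma \ref{Vanishing of integral} applied to differential forms.

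For the first two identities, I will use that the relevant operator is a graded derivation of the wedge product $\cdot$ on $\PV_{\log}(X)$. More precisely, $\bar\pa$ acts only on the $\A^{0,\bullet}$-factor of $\PV_{\log}(X) = \bigoplus_{p,q}\A^{0,q}\otimes_{\cO_X}\Theta^p_{X^\dag/0^\dag}$ and is a graded derivation of total degree $+1$, while $\{f,-\}$ agrees with the interior contraction $\iota_{df}$ and is a graded derivation of degree $-1$. Since the paper's grading $|\alpha|=i-j$ for $\alpha\in\PV^{i,j}_{\log}$ has the same parity as $i+j$, the Koszul rule yields
\begin{align*}
    \bar\pa(\alpha \cdot \beta) &= \bar\pa\alpha \cdot \beta + (-1)^{|\alpha|}\alpha \cdot \bar\pa\beta,\\
    \{f, \alpha \cdot \beta\} &= \{f,\alpha\} \cdot \beta + (-1)^{|\alpha|}\alpha \cdot \{f,\beta\}.
\end{align*}
Applying $\Tr$ and using $\Tr\bar\pa(\alpha\cdot\beta)=\Tr\{f,\alpha\cdot\beta\}=0$ from Proposition \ref{Vanishing of trace} yields the first two identities immediately.

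For the $\pa$ identity, the operator $\pa$ on $\PV_{\log}(X)$, defined by conjugation with $\Omega$, is a second-order BV operator rather than a derivation of $\cdot$, so the previous strategy fails. Instead I will work on the form side. With $\tilde\alpha := \alpha\vdash\Omega$ and $\tilde\beta := \beta\vdash\Omega$, the lemma immediately preceding the proposition gives
\begin{equation*}
    \abracket{\alpha,\beta}_{\PV} = (-1)^{n(q+1)+p(n+1)}\int_X \tilde\alpha \wedge \tilde\beta
\end{equation*}
whenever $\alpha\in\PV^{p,q}_{\log}$ and $\beta\in\PV^{n-p,n-q}_{\log}$. Since $\Omega$ is holomorphic, $\pa$ intertwines contraction: $\widetilde{\pa\alpha}=\pa\tilde\alpha$. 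Applying the usual Leibniz rule $\pa(\tilde\alpha\wedge\tilde\beta)=\pa\tilde\alpha\wedge\tilde\beta+(-1)^{|\tilde\alpha|}\tilde\alpha\wedge\pa\tilde\beta$ on forms, invoking Lemma \ref{Vanishing of integral} to discard $\int_X\pa(\tilde\alpha\wedge\tilde\beta)$, and then comparing with the lemma's sign for $\abracket{\alpha,\pa\beta}_{\PV}$ (where $\pa\beta$ carries bidegree $(n-p,n-q)$ while $\pa\alpha$ carries $(p-1,q)$, shifting the prefactor by $(-1)^{n+1}$) produces the required sign $(-1)^{|\alpha|}$ after all contributions collapse.

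The main obstacle throughout is sign bookkeeping: the paper's convention $|\alpha|=i-j$, the prefactor $(-1)^{n(q+1)+p(n+1)}$ from the preceding lemma, and the various Koszul/Leibniz signs must all combine to give the clean $(-1)^{|\alpha|}$ stated in the proposition. The useful observation is that $i-j$ and $i+j$ have the same parity, which is what makes the final signs uniform across the three identities.
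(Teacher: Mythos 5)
Your proposal is correct and follows essentially the same route as the paper: the first two identities via the graded-derivation property of $\bar\pa$ and $\{f,-\}$ combined with Proposition \ref{Vanishing of trace}, and the $\pa$-identity by converting the pairing to form integrals through the preceding lemma and integrating by parts via Lemma \ref{Vanishing of integral}. The only difference is that the paper carries out the sign chain explicitly (ending at $(-1)^{p+1+q}=(-1)^{|\alpha|}$), whereas you assert the final sign collapse; it does check out under your bidegree conventions, since $|\alpha|$ and the total degree of $\alpha\vdash\Omega$ have the parities you use.
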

\begin{proof}
    For the third identity, we can assume for simplicity that $\alpha \in \PV^{p+1,q}_{\log,c}(X), \beta \in \PV^{n-p,n-q}_{\log,c}(X)$, and then we have
    \begin{align*}
        \abracket{\pa \alpha, \beta}_{\PV}
        =& \int_X ((\pa \alpha \cdot \beta) \vdash \Omega) \wedge \Omega \\
        =& (-1)^{n(q+1)+p(n+1)} \int_X (\pa \alpha \vdash \Omega) \wedge (\beta \vdash \Omega) \\
        =& (-1)^{n(q+1)+p(n+1)+n-p-1+q+1} \int_X (\alpha \vdash \Omega) \wedge (\pa\beta \vdash \Omega) \\
        =& (-1)^{n(q+1)+p(n+1)+n-p-1+q+1 + n(q+1)+(p+1)(n+1)} \int_X ((\alpha \wedge  \pa\beta) \vdash \Omega) \wedge \Omega \\
        =& (-1)^{p+1+q} \Tr (\alpha \cdot \pa\beta) \\
        =& (-1)^{|\alpha|} \abracket{\alpha,\pa\beta}_{\PV}.
    \end{align*}
    The first two identities can be proved similarly, or simply by using Proposition \ref{Vanishing of trace} and the fact that $\bar\pa$ and $\{f,-\}$ are derivations.
\end{proof}

By Theorems \ref{Vanishing of trace} and \ref{Adjoints of operators}, the trace map and the pairing both descend to cohomologies.
Applying Corollary \ref{homotopy formula for PV}, we can define on the (formally completed) \emph{Brieskorn lattice} 
$$\mathcal{H}_f := H(\PV_{\log}(X)[[u]],Q_f)$$
a trace map and a pairing: 
for $\alpha(u), \beta(u) \in \mathcal{H}_f$,
\begin{align*}
    \Res_f: \mathcal{H}_f \to \C[[u]], \quad
    \qquad \Res_f(\alpha(u)):= \Tr(T_\rho^u \alpha(u));
\end{align*}
and
\begin{align*}
    \KK_f: \mathcal{H}_f \times \mathcal{H}_f \to \,\C[[u]],
    \qquad \KK_f(\alpha(u),\beta(u)) := \Tr(T_\rho^u \alpha(u) \wedge \overline{T^u_\rho \beta(u)}),
\end{align*}
where $\overline{\beta(u)} := \beta(-u)$.

Note that $\Res_f$ and $\KK_f$ can be extended to $H(\PV_{\log}(X)((u)),Q_f)$ linearly in $u$, which is isomorphic to $\mathcal{H}_f \otimes_{\C[[u]]} \C((u))$ by Theorem \ref{Hodge-to-de Rham} and will be denoted by $\mathcal{H}_{f,\pm}$.
In the sequel, we will call $\Res_f$ and $\KK_f$ the \emph{higher residue map} and the \emph{higher residue pairing} respectively.

\subsection{Grading structure and good basis}\label{grading}

There exists natural grading structures on the complexes and cohomologies. To be compatible with the cohomology grading of $H^*(Y,\C)$,
we put $$\deg(z_i) = 2, \quad \forall 1 \leq i \leq d$$ so that $\deg(f) = 2$.
In this way, it is natural to put $\deg(\td z) = 2, \deg(\bar z) = \deg(\td \bar z) = -2$.
We also put
\begin{align*}
    \deg(\tilde{\theta}_i) = 2,
\end{align*}
where $\tilde{\theta}_i$ is defined in equation \eqref{eqn:defining_theta_tilde} from $\theta_i$'s. Finally, we put $\deg(u) = 2$.

We can pack up the above degree assumption by defining a grading operator $E_f: \PV_{\log}(X)((u)) \to \PV_{\log}(X)((u))$ by
\begin{align*}
    E_f := 2u\frac{\pa}{\pa u} + 2\sum_{i=1}^d \left(z_i \frac{\pa}{\pa z_i} -\bar z_i \frac{\pa}{\pa \bar z_i} - \td \bar z_i \wedge \frac{\pa}{\pa \td \bar z_i}\right)  +2 \sum_{i=1}^n \tilde{\theta}_i \wedge \frac{\pa}{\pa \tilde{\theta}_i},
\end{align*}
where $\frac{\pa}{\pa \tilde{\theta}_i}$ is an odd operator satisfying $[\frac{\pa}{\pa \tilde{\theta}_i}, \tilde{\theta}_i] = 1$. 
Let $\Delta$ be the defining polytope of the toric manifold $Y$ and $\mathcal{C}^i(\Delta, \A^{0,*})$ be the sheaf defined in Appendix \ref{Dolbeault}.
The operators $z_i \frac{\pa}{\pa z_i}$, $\bar z_i \frac{\pa}{\pa \bar z_i}$ and $d \bar z_i \wedge \frac{\pa}{\pa \td \bar z_i}$ act on $\mathcal{C}^0(\Delta, \A^{0,*}) = \bigoplus_{i=1}^d \A^{0,*}(X_i)$ (on each component $X_i$, only non-zero $z_i$'s and $\bar z_i$'s in the summand act) and these actions pass on to the subspaces $\A^{0,*}(X)$ using Proposition \ref{Check resolution_3}, while the operator $\tilde{\theta}_i \wedge \frac{\pa}{\pa \tilde{\theta}_i}$ acts on $\Theta_{X^\dag/0^\dag}^*$.

Using relation (\ref{relation of log differential}) to write $\partial = \sum_{i=1}^d z_i \frac{\partial}{\partial z_i} \frac{\td z_i}{z_i} = \sum_{i=1}^n (z_i\frac{\pa}{\pa z_i} + \sum_{l=n+1}^d a_{li}z_l\frac{\pa}{\pa z_l}) \frac{\td z_i}{z_i}$. Then by contraction with $\Omega$ and equation(\ref{eq: {f,-}})
we have
\begin{align*}
    Q_f 
    =& \, \bar\partial + \{f,-\} + u\partial \\
    =& \sum_i^d \td \bar z_i \wedge \frac{\pa}{\pa \bar z_i} + \sum_{i=1}^n \left(g_i  + u \left(z_i\frac{\pa}{\pa z_i} + \sum_{l=n+1}^d a_{li}z_l\frac{\pa}{\pa z_l} \right)\right) \frac{\pa}{\pa \tilde{\theta}_i}.
\end{align*}
$Q_f$ preserves the $\Z$-grading induced by $E_f$ and thus
\begin{align*}
    [E_f, Q_f] = 0.
\end{align*}
We conclude that the action of $E_f$ descends to cohomologies.
In particular, the isomorphism in Theorem \ref{state space isomorphism} is an isomorphism of graded rings.

\begin{lemma} \label{weight property}
    The higher residue map $\Res_f$ and the higher residue pairing $\KK_f$ both have weight degree $-2n$.
\end{lemma}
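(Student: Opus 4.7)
The plan is to reduce both claims to two intermediate facts: first, that the homotopy operator $T^u_\rho$ preserves the $E_f$-grading; second, that the bare trace $\Tr$ on $\PV^{n,n}_{\log,c}(X)$ is homogeneous of weight exactly $-2n$. The proof is then a routine combination, using that $\Res_f(\alpha) = \Tr(T^u_\rho \alpha)$ and $\KK_f(\alpha,\beta) = \Tr(T^u_\rho\alpha \wedge \overline{T^u_\rho\beta})$.

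For the first fact, since $\rho=\rho(|z_1|^2,\ldots,|z_d|^2)$ depends only on $|z_i|^2$, it is invariant under the $S^1$-action $\phi_t\colon z_i\mapsto e^{it}z_i$ generated by the holomorphic part of $E_f$, hence $\deg\rho=0$. From \eqref{V_f} with $\deg g_k=2$, $\deg\bar g_k=-2$, $\deg\tilde\theta_k=2$, a direct check yields $\deg V_f=0$ as a multiplication operator. Combined with the already established $[E_f,Q_f]=0$, every factor in the geometric-series expansion of $T^u_\rho=\rho+(Q\rho)V_f(1+[Q,V_f])^{-1}$ preserves $E_f$-weight on $\PV_{\log}(X)[[u]]$. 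Likewise $\alpha(u)\mapsto\overline{\alpha(u)}=\alpha(-u)$ preserves weight since $u\mapsto -u$ is weight-neutral.

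For the second fact, let $\alpha\in\PV^{n,n}_{\log,c}(X)$ be $E_f$-homogeneous of weight $w_\alpha$. Writing $\alpha|_{X_j}=h_j\,\tilde\theta_1\wedge\cdots\wedge\tilde\theta_n\wedge d\bar z_{i_1}\wedge\cdots\wedge d\bar z_{i_n}$ on each $n$-dimensional component $X_j$ with coordinates $\{i_1,\ldots,i_n\}$, the cancellation of the $+2n$ from the $\tilde\theta$-factors against the $-2n$ from the $d\bar z$-factors forces $\deg h_j=w_\alpha$. Using that $(\tilde\theta_1\wedge\cdots\wedge\tilde\theta_n)\vdash\Omega=\pm1$ globally (the log Calabi--Yau duality), the integrand on $X_j$ becomes $(\alpha\vdash\Omega)\wedge\Omega|_{X_j}=\pm h_j\det(C_j^{-1})(z_{i_1}\cdots z_{i_n})^{-1}\,dz_{i_1}\wedge\cdots\wedge dz_{i_n}\wedge d\bar z_{i_1}\wedge\cdots\wedge d\bar z_{i_n}$, whose function part has weight $w_\alpha-2n$. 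Since $\phi_t$ is a biholomorphism of $X$ preserving each component, and $\phi_t^*dz_j=e^{it}dz_j$, $\phi_t^*d\bar z_j=e^{-it}d\bar z_j$ give $\phi_t^*\omega=e^{it(w_\alpha-2n)/2}\omega$ for the integrand $\omega$, diffeomorphism invariance of compactly supported top-form integration yields $\Tr\alpha=\int_X\phi_t^*\omega=e^{it(w_\alpha-2n)/2}\Tr\alpha$ for all $t\in\R$, forcing $\Tr\alpha=0$ unless $w_\alpha=2n$.

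To combine these, take $\alpha(u)\in\mathcal{H}_f$ of total weight $w_\alpha$ (with $\deg u=2$) and expand $T^u_\rho\alpha(u)=\sum_k u^k\gamma_k$. Each $\gamma_k\in\PV_{\log,c}(X)$ has $E_f$-weight $w_\alpha-2k$, so $\Tr\gamma_k$ vanishes unless $k=(w_\alpha-2n)/2$; this extracts the single monomial $u^{(w_\alpha-2n)/2}\Tr\gamma_{(w_\alpha-2n)/2}$, homogeneous in $\C[[u]]$ of weight $w_\alpha-2n$, so $\Res_f$ has weight $-2n$. The same argument applied to $T^u_\rho\alpha(u)\wedge\overline{T^u_\rho\beta(u)}$ of total weight $w_\alpha+w_\beta$ gives $\deg\KK_f=-2n$. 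The main subtlety is justifying the $S^1$-invariance of integration across the codimension-one strata $D_{ij}$; this is ensured by the log-compatibility condition defining $\A^{p,q}_{\log,c}(X)$ (which was already exploited in Lemma \ref{Vanishing of integral}) together with the fact that $\phi_t$ preserves each component $X_j$ and each intersection $D_{ij}$, so the componentwise integrals assemble into a single $S^1$-invariant functional.
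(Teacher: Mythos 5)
Your proposal is correct and follows essentially the same route as the paper: the paper's one-line proof rests precisely on the two facts you establish, namely that $T^u_\rho$ (and hence $\Res_f$, $\KK_f$ on representatives) respects the $E_f$-grading and that $\Tr\alpha$ vanishes unless $\alpha$ has weight degree $2n$. Your $S^1$-rotation argument is just an explicit way of carrying out the ``straightforward calculation'' the paper leaves implicit (and the worry about the strata $D_{ij}$ is moot, since the integral is defined componentwise and invariance is ordinary change of variables on each $X_j$).
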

\begin{proof}
    This is a straightforward calculation, using the fact that $\Tr \alpha, \alpha \in \PV_{\log, c}$ vanishes unless $\alpha$ has weight degree $2n$.
\end{proof}

Now let us describe a set of distinguished generators of $\mathcal{H}_f$.
As proved in \cite[Section 5.2]{Ful}, we can fix an order $\sigma_1,\cdots,\sigma_\mu$ of $n$-dimensional cones of $\Sigma$ such that if we let $\tau_i \subset \sigma_i,\ 1\leq i \leq \mu$ be the intersection of $\sigma_i$ with all cones in $\{\sigma_j : j>i,\ \dim(\sigma_i \cap \sigma_j) = n-1\}$, then we have
\begin{equation}\label{eqn:moving_lemma_condition}
    \text{if } \tau_i \text{ is contained in } \sigma_j, \text{ then } i \leq j.
\end{equation}
Given a cone $\tau$ in $\Sigma$, let $P(\tau)$ be the monomial $z_{i_1}\cdots z_{i_k}$ where $\rho_{i_1}, \cdots, \rho_{i_k}$ are the generating rays of $\tau$.
By the results in the same section of \cite{Ful}, $\{P(\tau_1), \cdots, P(\tau_\mu)\}$ forms a $\C$-basis of $H^0(\Theta^\bullet_{X^\dag / 0^\dag},\{f,-\})$.
By Theorem \ref{Hodge-to-de Rham}, they also form a set of generators of the  free $\C[[u]]$-module $\mathcal{H}_f$.
If we denote $\varphi_i := P(\tau_i)$, then $\varphi_1 = 1$ and we write $\varphi_\mu = z_1 \cdots z_n$ for convenience.

The following lemma is a refined version of the ``algebraic moving lemma'' in page 107 of \cite{Ful}.

\begin{lemma}[Moving Lemma] \label{algebraic moving lemma}
Let $\gamma_1 \subsetneq \sigma \subset \gamma_2$ be cones in $\Sigma$ and let $k =\dim(\sigma)$. Then there are cones $\sigma_i$ of dimension $k$ in $\Sigma$ with $\gamma_1 \subset \sigma_i$ and $\sigma_i \nsubseteq \gamma_2$ and integers $m_i$ such that
\begin{equation*}
    P(\sigma) = \sum_i m_i P(\sigma_i) \in \mathcal{H}_f.
\end{equation*}
\end{lemma}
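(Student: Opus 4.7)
The plan is to work in the classical cohomology ring $\C[z]/(P(\Sigma)+SR(\Sigma))$; by Theorems \ref{state space isomorphism} and \ref{Hodge-to-de Rham}, any identity established there lifts to one in $\mathcal{H}_f$ by $\C[[u]]$-linearity (after conjugation with $\Omega$), so it suffices to prove the Moving Lemma in this polynomial quotient. The strategy is to pick a single linear generator of $P(\Sigma)$ adapted to the cones $\gamma_1 \subsetneq \sigma \subset \gamma_2$ and use it to eliminate one variable $z_\rho$ from the monomial $P(\sigma)$; the resulting terms will either vanish modulo $SR(\Sigma)$ or correspond to cones of the required type.

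Since $\gamma_1\subsetneq \sigma$, pick a primitive generator $\rho$ of $\sigma$ not lying in $\gamma_1(1)$, and let $\sigma/\rho$ be the face of $\sigma$ spanned by the remaining rays, so that $P(\sigma)=z_\rho\cdot P(\sigma/\rho)$. Because $\Sigma$ is smooth, the primitive generators of $\gamma_2$ can be extended to a $\Z$-basis of $N$, so there exists $m\in M$ with $\langle \rho,m\rangle=1$ and $\langle \rho'',m\rangle=0$ for every $\rho''\in \gamma_2(1)\setminus\{\rho\}$. The relation $\sum_{\rho_i\in\Sigma(1)}\langle \rho_i,m\rangle\,z_i\equiv 0\pmod{P(\Sigma)}$ then reduces to
$$z_\rho \equiv -\sum_{\rho'\in\Sigma(1)\setminus\gamma_2(1)}\langle \rho',m\rangle\,z_{\rho'}\pmod{P(\Sigma)},$$
and multiplying by $P(\sigma/\rho)$ yields
$$P(\sigma)\equiv -\sum_{\rho'\in\Sigma(1)\setminus\gamma_2(1)}\langle \rho',m\rangle\,z_{\rho'}P(\sigma/\rho)\pmod{P(\Sigma)}.$$

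For each surviving term, either $\{\rho'\}\cup(\sigma/\rho)(1)$ fails to be the generator set of a cone of $\Sigma$, in which case $z_{\rho'}P(\sigma/\rho)\in SR(\Sigma)$ vanishes in $\mathcal{H}_f$; or it generates a cone $\sigma_{\rho'}\in\Sigma$ with $P(\sigma_{\rho'})=z_{\rho'}P(\sigma/\rho)$. In the latter case, $\rho'\notin\gamma_2$ while $\sigma/\rho\subset\sigma\subset\gamma_2$, so $\rho'$ is not already a generator of $\sigma/\rho$ and hence $\dim(\sigma_{\rho'})=k$; moreover $\gamma_1\subset\sigma/\rho\subset\sigma_{\rho'}$, and $\sigma_{\rho'}\nsubseteq\gamma_2$ because $\rho'\in\sigma_{\rho'}(1)\setminus\gamma_2(1)$. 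Setting $m_{\sigma_{\rho'}}=-\langle \rho',m\rangle\in\Z$ and summing over the surviving cones produces the claimed expansion.

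The main technical obstacle is constructing $m$ with the stated properties: when $\gamma_2$ is a maximal cone, $m$ is just a dual basis element, but for lower-dimensional $\gamma_2$ one must invoke the smoothness of $\Sigma$ to extend $\gamma_2(1)$ to a full $\Z$-basis of $N$. The remainder is combinatorial bookkeeping, keeping careful track of which terms are killed by $P(\Sigma)$ and which by $SR(\Sigma)$, and verifying the dimension and containment conditions on the surviving cones.
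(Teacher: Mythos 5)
Your combinatorial construction (choosing $\rho\in\sigma(1)\setminus\gamma_1(1)$, a dual vector $m$ vanishing on $\gamma_2(1)\setminus\{\rho\}$ with $\langle\rho,m\rangle=1$, multiplying the linear relation by $P(\sigma/\rho)$, and discarding the Stanley--Reisner terms) is essentially the paper's argument, which takes $m=e^i$ dual to a maximal cone containing $\gamma_2$. However, your opening reduction contains a genuine gap: it is \emph{not} true that an identity in the quotient ring $\C[z]/(P(\Sigma)+SR(\Sigma))\cong H^0(\Theta^\bullet_{X^\dag/0^\dag},\{f,-\})$ automatically ``lifts by $\C[[u]]$-linearity'' to an identity in $\mathcal{H}_f=H(\PV_{\log}(X)[[u]],Q_f)$. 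Theorems \ref{state space isomorphism} and \ref{Hodge-to-de Rham} only give freeness and the isomorphism $\mathcal{H}_f/u\mathcal{H}_f\cong H(\Theta^\bullet_{X^\dag/0^\dag},\{f,-\})$; an element of the ideal, viewed as a class in $\mathcal{H}_f$, is in general only divisible by $u$, not zero, because $\{f,\xi\}=(\{f,-\}+u\pa)\xi-u\pa\xi$ and the divergence term $\pa\xi$ need not vanish in cohomology. Concretely, with $g_i=z_i+\sum_{l>n}a_{il}z_l\in P(\Sigma)$ one has $z_i g_i=(\{f,-\}+u\pa)(z_i\tilde\theta_i)\mp u\,z_i$, so $[z_ig_i]=\mp u[z_i]\neq 0$ in $\mathcal{H}_f$ even though $z_ig_i=0$ in the Jacobian-type ring. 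Since the Moving Lemma is used later (e.g.\ in Theorem \ref{good basis} and Lemma \ref{residue of Gauss-Manin}) precisely as an exact identity in $\mathcal{H}_f$ with no $u$-corrections, this step cannot be waved through.

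The good news is that your particular relation does lift, but this has to be checked rather than invoked: set $\tilde\theta_m:=\sum_{j=1}^d\langle\rho_j,m\rangle\theta_j$, which lies in $\Theta^1_{X^\dag/0^\dag}$ by \eqref{relation of log derivation}, and observe
\begin{equation*}
(\{f,-\}+u\pa)\bigl(P(\sigma/\rho)\,\tilde\theta_m\bigr)\;=\;\Bigl(\sum_{j=1}^d\langle\rho_j,m\rangle z_j\Bigr)P(\sigma/\rho)\;+\;u\,\pa\bigl(P(\sigma/\rho)\,\tilde\theta_m\bigr),
\end{equation*}
where the divergence term is proportional to $\sum_{\rho''\in(\sigma/\rho)(1)}\langle\rho'',m\rangle\,P(\sigma/\rho)$ and hence vanishes, because every ray appearing in $P(\sigma/\rho)$ lies in $\gamma_2(1)\setminus\{\rho\}$ and is killed by your choice of $m$. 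With this exactness statement in place of the blanket lifting principle, the rest of your bookkeeping (Stanley--Reisner monomials vanish identically on $X$, the surviving cones have dimension $k$, contain $\gamma_1$, and are not contained in $\gamma_2$, with integer coefficients $-\langle\rho',m\rangle$) goes through and recovers the paper's proof.
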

\begin{proof}
	Without loss of generality, we assume that $\gamma_1, \sigma$ and $\gamma_2$ are generated by $\{\rho_1,\cdots,\rho_p\}$, $\{\rho_1,\cdots,\rho_k\}$ and $\{\rho_1,\cdots,\rho_q\}$ respectively, where $1\leq p < k \leq q \leq n$.
    For $p+1 \leq i \leq k$, $\tilde\theta_i = \theta_i + \sum_{l=n+1}^d a_{il} \theta_l \in \Theta^1_{X^\dag / 0^\dag}$ by (\ref{relation of log derivation}), so 
    we have
    \begin{equation*}
    \mathcal{H}_f \ni 0 = (\{f,-\}+ u\pa) (z_1\cdots \hat z_i \cdots z_k \tilde\theta_i) = z_1 \cdots z_k + \sum_{l=n+1}^d a_{il} z_l z_1\cdots \hat z_i \cdots z_k.
\end{equation*}
    Neglecting the terms $z_l z_1\cdots \hat z_i \cdots z_k$ such that the corresponding rays do not lie in a common cone, we get the desired result.
\end{proof}

\begin{theorem} \label{good basis}
    The basis described above is a good basis, namely, $\KK_f(\varphi_i, \varphi_j) \in \C$ for any $i, j$.
\end{theorem}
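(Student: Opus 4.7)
The strategy I would pursue is a weight/grading argument using the Euler vector field $E_f$ introduced in Section \ref{grading}. By construction, each basis element $\varphi_i = P(\tau_i) = z_{i_1}\cdots z_{i_{\dim\tau_i}}$ is a monomial in the $z_k$'s; since $\deg(z_k)=2$, it is homogeneous of weight $2\dim\tau_i$ under $E_f$. As $[E_f, Q_f]=0$, the class $[\varphi_i]\in\mathcal{H}_f$ retains this weight, so $\KK_f(\varphi_i,\varphi_j)\in\C[[u]]$ carries a definite weight determined by the inputs together with the weight of $\KK_f$ itself.

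By Lemma \ref{weight property}, $\KK_f$ has weight degree $-2n$, hence $\KK_f(\varphi_i,\varphi_j)$ is a homogeneous element of $\C[[u]]$ of weight $2(\dim\tau_i + \dim\tau_j - n)$. Since $\deg(u)=2$, the only homogeneous elements of $\C[[u]]$ of weight $2k$ are scalar multiples of $u^k$ for $k\geq 0$, and zero for $k<0$. Therefore
\[
\KK_f(\varphi_i,\varphi_j) = c_{ij}\,u^{k_{ij}}, \qquad k_{ij}:=\max\{\dim\tau_i+\dim\tau_j-n,\,0\},
\]
for some constant $c_{ij}\in\C$.

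In the range $\dim\tau_i+\dim\tau_j\leq n$ this immediately gives $\KK_f(\varphi_i,\varphi_j)\in\C$: either zero (when the weight is strictly negative), or a constant coinciding with the Poincar\'e pairing value under the identification $\mathcal{H}_f/u\cong H^*(Y,\C)$ of Theorem \ref{state space isomorphism}. The remaining, and hardest, case is $\dim\tau_i+\dim\tau_j>n$, where $c_{ij}$ must be shown to vanish. At $u=0$, the pairing is the Poincar\'e pairing, which vanishes since $\varphi_i\cdot\varphi_j$ has total degree exceeding $2n=\dim_{\R}Y$ and so represents $0$ in $H^{>2n}(Y,\C)$; the challenge is to promote this $u=0$ vanishing to the vanishing of the full monomial $c_{ij}u^{k_{ij}}$.

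I would attempt the last step as follows: use the Moving Lemma (Lemma \ref{algebraic moving lemma}) to rewrite $\varphi_i\cdot\varphi_j$ in the polyvector complex as $\{f,\xi\}$ for an explicit polyvector $\xi$ built from the $\tilde\theta_k$'s, then combine Proposition \ref{Vanishing of trace} and the adjointness formulas of Proposition \ref{Adjoints of operators} to perform an integration-by-parts on the integrand defining $c_{ij}$ (after replacing $\varphi_i,\varphi_j$ by their compactly supported homotopy representatives $T_\rho^u\varphi_i,\overline{T_\rho^u\varphi_j}$). The key obstacle is to make this integration-by-parts genuinely global in the presence of the log structure and the non-compactly-supported $V_f$ appearing in $T_\rho^u$; once this is accomplished, the $\{f,\xi\}$-exactness of $\varphi_i\varphi_j$ will force $c_{ij}=0$, completing the proof.
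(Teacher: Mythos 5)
Your weight-degree reduction is exactly how the paper's proof begins: Lemma \ref{weight property} disposes of the case $\deg\varphi_i+\deg\varphi_j\leq 2n$ and shows that in the remaining case $\KK_f(\varphi_i,\varphi_j)=c_{ij}u^{k_{ij}}$ for a single constant. The gap is in your final step. The higher residue pairing is \emph{not} a function of the product class $[\varphi_i\varphi_j]$: by definition $\KK_f(\varphi_i,\varphi_j)=\Tr\bigl(T^u_\rho\varphi_i\wedge\overline{T^u_\rho\varphi_j}\bigr)$, and the conjugation in the second slot flips $u\mapsto -u$, so the integrand is $T^{u}_\rho\varphi_i\wedge T^{-u}_\rho\varphi_j$ and cannot be reassembled into a homotopy image of $\varphi_i\varphi_j$; the integration by parts via Proposition \ref{Adjoints of operators} leaves uncancelled terms precisely because the two factors carry opposite signs of $u$. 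Consequently, $\{f,-\}$-exactness of the product only forces the vanishing of the leading term $\KK_f^{(0)}(\varphi_i,\varphi_j)$ (which, as you note, is automatic when $k_{ij}>0$) and says nothing about $c_{ij}$. If the inference ``product exact $\Rightarrow$ full pairing vanishes'' were valid, essentially any monomial basis with cohomologically trivial overlaps would be good, whereas the existence of good bases is exactly the nontrivial point the theorem addresses. A secondary issue: the Moving Lemma \ref{algebraic moving lemma} as stated applies to square-free monomials $P(\sigma)$, while $\varphi_i\varphi_j$ generally has repeated variables, so even producing your primitive $\xi$ needs an extra argument.

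What the paper does instead is to move \emph{one factor within its cohomology class}. Since $\KK_f$ is well defined on $\mathcal{H}_f$, the Moving Lemma applied to $\varphi_i$ (taking $\gamma_1\subset\tau_i$ spanned by the rays other than a common variable $z_k$, and $\gamma_2=\sigma$ a maximal cone containing both $\tau_i$ and $\tau_j$) replaces $\varphi_i$ by a $\C$-linear combination of square-free monomials in which $z_k$ has been traded for variables whose rays lie outside $\sigma$. Any resulting term whose rays do not lie in a common cone with those of $\varphi_j$ pairs to zero outright, because the product of the two monomials vanishes identically on $X$ by the Stanley--Reisner relations and $T^u_\rho$ is built from multiplication operators, so the whole integrand is zero; the surviving terms share one fewer variable with $\varphi_j$. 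Iterating, one reaches pairs of monomials with no common variables and total degree $>2n$, i.e.\ more than $n$ distinct rays, which cannot lie in a common cone of the smooth fan, so the pairing vanishes on the nose. This replacement-in-cohomology argument is the missing idea; no integration by parts on the product is needed.
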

\begin{proof}
    If $\deg(\varphi_i)+\deg(\varphi_j) \leq 2n$, then $\KK_f(\varphi_i, \varphi_j) \in \C$ by Lemma \ref{weight property}.
    So we assume that $\deg(\varphi_i)+\deg(\varphi_j) > 2n$.
    If $\tau_i$ and $\tau_j$ are not contained in a common cone of $\Sigma$, then $\KK_f(\varphi_i, \varphi_j) = 0$ since on each component of $X$ either $\varphi_i$ or $\varphi_j$ vanishes.
    Thus we may further assume that $\tau_i$ and $\tau_j$ are contained in a common maximal cone $\sigma \in \Sigma$.
    In this case, the number of common variables of $\varphi_i$ and $\varphi_j$ is $m \geq 1$.
    Assuming $z_k$ is a common variable, then the Moving Lemma \ref{algebraic moving lemma} tells us that we can replace $z_k$ in $\varphi_i$ by a linear combination of variables not corresponding to rays in $\sigma$.
    By neglecting the terms whose variables and that of $\varphi_j$ do not correspond to rays in a common cone of $\Sigma$, we reduce to the case that $\varphi'_i$ and $\varphi'_j$ has $m-1$ common variables.
    By inductively reducing the number of common variables, we arrived at the case that $\varphi'_i$ and $\varphi'_j$ has no common variables and $\deg(\varphi'_i)+\deg(\varphi'_j) > 2n$. In this case, $\KK_f(\varphi'_i, \varphi'_j) = 0$ since the corresponding rays cannot lie in a common cone.
\end{proof}

\begin{proposition} \label{nondegeneracy}
    Let $\KK_f^{(0)}: H(\PV_{\log}(X),\bar\pa_f) \times H(\PV_{\log}(X),\bar\pa_f) \to \C$ be the leading term in the expansion in $u$ of the higher residue pairing $\KK_f$.
    Under the ring isomorphism in Theorem \ref{state space isomorphism}, $\KK_f^{(0)}$ coincides with the Poincar\'e pairing of $Y$ up to a nonzero constant depending only on the dimension $n$.
    In particular, $\KK_f^{(0)}$ is non-degenerate.
\end{proposition}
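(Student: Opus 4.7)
The plan is to reduce the proposition to a comparison of two weight $-2n$ linear functionals on the one-dimensional top-degree piece of $H^*(Y,\C)$. Writing $\Res_f^{(0)}$ for the $u^0$-coefficient of $\Res_f$, I would first establish the factorization
$$\KK_f^{(0)}([\alpha],[\beta]) \;=\; \Res_f^{(0)}([\alpha]\cdot[\beta])$$
for classes $[\alpha],[\beta]\in H^0(\Theta^\bullet_{X^\dag/0^\dag},\{f,-\})$. This follows from Proposition \ref{prop: homotopy formula}: the inclusion $\iota:(\PV_{\log,c}(X),\bar\pa_f)\hookrightarrow(\PV_{\log}(X),\bar\pa_f)$ is a quasi-isomorphism with $T_\rho$ as homotopy inverse, so the compactly-supported elements $T_\rho\alpha\wedge T_\rho\beta$ and $T_\rho(\alpha\cdot\beta)$ represent the same class in $H(\PV_{\log,c}(X),\bar\pa_f)$, and their traces agree by Proposition \ref{Vanishing of trace}. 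By Lemma \ref{weight property}, $\Res_f^{(0)}$ has weight degree $-2n$, hence factors through the top-degree piece of $H^*(Y,\C)\cong\C[z]/(P(\Sigma)+SR(\Sigma))$. The Moving Lemma \ref{algebraic moving lemma} shows that every degree $2n$ monomial is a scalar multiple of $\varphi_\mu=z_1\cdots z_n$, so this piece is one-dimensional and $\Res_f^{(0)}$ is entirely determined by the single constant $c_n := \Tr(T_\rho\varphi_\mu)$.

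Next I would compute $c_n$ and show it is nonzero and depends only on $n$. Since $\varphi_\mu=z_1\cdots z_n$ vanishes on every $n$-dimensional component of $X$ other than $X_1\cong\C^n$, only $X_1$ contributes to the trace. Expanding
$$T_\rho\varphi_\mu \;=\; \rho\,\varphi_\mu \;+\; (\bar\pa\rho)\, V_f\sum_{k\geq 0}(-1)^k(\bar\pa V_f)^k\varphi_\mu,$$
the only summand that lies in the bidegree $(n,n)$ piece (the sole contribution to the trace) is the one with $k=n-1$, namely $(-1)^{n-1}(\bar\pa\rho)\wedge V_f(\bar\pa V_f)^{n-1}\varphi_\mu$. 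On $X_1$ we have $V_f=(\sum_j|z_j|^2)^{-1}\sum_k|z_k|^2\pa_{z_k}$, and after contracting with the restricted log volume form $\Omega|_{X_1}=\td z_1/z_1\wedge\cdots\wedge\td z_n/z_n$ the integrand takes the shape of a Bochner-Martinelli-type kernel. Applying Stokes' theorem to trade the integral over $X_1$ for a boundary residue on a small sphere around $0\in X_1$, the value collapses to a nonzero multiple of $(2\pi\sqrt{-1})^n$ depending only on $n$.

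For the final comparison, the Poincaré pairing of $Y$ factors as $\langle\alpha,\beta\rangle_Y=\int_Y(\alpha\cup\beta)$ through the weight $-2n$ functional $\int_Y:H^{2n}(Y,\C)\to\C$. Because the top-degree piece is one-dimensional, $\Res_f^{(0)}$ and $\int_Y$ must be proportional with ratio $c_n$; combining this with the factorization from the first step, and using that the cup product on $H^*(Y,\C)$ corresponds to the ring structure on $H^0(\Theta^\bullet_{X^\dag/0^\dag},\{f,-\})$ under the state space isomorphism, we deduce $\KK_f^{(0)}=c_n\,\langle\cdot,\cdot\rangle_Y$. Non-degeneracy of $\KK_f^{(0)}$ is then inherited from that of the Poincaré pairing of $Y$. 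I expect the main technical obstacle to lie in the explicit evaluation of $c_n$: one must carefully identify the bidegree $(n,n)$ component of $T_\rho\varphi_\mu$ on $X_1$, keep track of the polyvector contractions with the log volume form $\Omega$, and justify the passage to a residue via iterated Stokes' theorem on a shrinking family of balls around the origin.
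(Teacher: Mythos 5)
Your proposal is correct and follows essentially the same route as the paper's proof: identify $\KK_f^{(0)}$ with the trace of the product of two classes (hence, via the state space isomorphism and the weight-degree lemma, with the Poincar\'e pairing up to the single constant $\Res_f(z_1\cdots z_n)$), then compute that constant on $X_1$ alone from the $k=n-1$ term of $T_\rho$ by a Bochner--Martinelli/Stokes residue argument. The only difference is that the paper carries out the evaluation you defer, giving the explicit formula for $V_f[\bar\pa,V_f]^{n-1}$ and the value $(-2\pi\sqrt{-1})^n(n-1)!$.
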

\begin{proof}
    In fact, $\KK_f^{(0)}$ is the trace of the product of two classes in $ H(\PV_{\log}(X),\bar\pa_f)$, and is hence defined in the same way as the Poincar\'e (or cup product) pairing of $Y$.
    By Theorem \ref{state space isomorphism} and Lemma \ref{weight property}, it remains to show the number $\Res_f(z_1\cdots z_n) \in \C$ is nonzero.
    The following sketched calculation is parallel to that of the proof of Proposition 2.5 in \cite{LLS}.
    First, one can show by similar calculations that
    \begin{equation*}
        V_f [\bar\pa, V_f]^{n-1} = (-1)^{n(n-1)/2} (n-1)! \sum_{i=1}^n (-1)^{i-1} \frac{\bar z_i}{|z|^{2n}} \td \bar z_1 \wedge \cdots \widehat{\td \bar z_i} \wedge \cdots \td \bar z_n \otimes \theta_1 \cdots \theta_n
    \end{equation*}
    and it is $\bar\pa$-closed. Then, by the residue theorem, we have
    \begin{align*}
         & \Res_f(z_1\cdots z_n)\\
        =& (-1)^{n-1} \int_{X_1} \left(\bar \pa \rho V_f [\bar\pa, V_f]^{n-1} (z_1\cdots z_n) \vdash \frac{\td z_1}{z_1} \wedge \cdots  \wedge \frac{\td z_n}{z_n}\right) \wedge \left(\frac{\td z_1}{z_1} \wedge \cdots \wedge \frac{\td z_n}{z_n}\right) \\
        =& (-1)^{n-1} \int_{X_1} \left(\bar \pa \rho V_f [\bar\pa, V_f]^{n-1} \vdash \frac{\td z_1}{z_1} \wedge \cdots \wedge\frac{\td z_n}{z_n}\right) \wedge (\td z_1 \wedge \cdots \wedge \td z_n) \\
        =& (-1)^{n-1} (n-1)!\\
        & \quad \cdot\int_{X_1} \td \left(\rho \sum_{i=1}^n (-1)^{i-1} \frac{\bar z_i}{|z|^{2n}} (\td \bar z_1 \wedge \cdots \wedge \widehat{\td \bar z_i} \wedge \cdots \wedge \td \bar z_n) \wedge (\td z_1 \wedge \cdots \td z_n)\right)\\
        =& (-1)^n (n-1)! \int_{|z|^2 = R} \left(\sum_{i=1}^n (-1)^{i-1} \frac{\bar z_i}{|z|^{2n}} \td \bar z_1 \wedge \cdots \widehat{\td \bar z_i} \wedge \cdots \td \bar z_n) \wedge (\td z_1 \wedge \cdots \td z_n)\right) \\
        =& (-2 \pi \sqrt{-1})^n (n-1)!.
    \end{align*}

\end{proof}

\section{Deformation theory of log LG models} \label{sec:deformation}

\subsection{Unfolding}\label{unfolding}

An unfolding of our log LG model consists of two parts -- one corresponds to the smoothing of the target space $X$ and the other corresponds to deformation of the superpotential $f$. The former is described by a construction in \cite{GHK}.

Let $P := \overline{NE}(Y)$ be the cone of effective curves in $Y$ with $P^{gp} = A_1(Y,\Z)$. We consider the following exact sequence
\begin{align*}
    0 \to A_1(Y,\Z) \to \Z^{|\Sigma(1)|} \stackrel{s}{\to} N \to 0,
\end{align*}
where $s$ is the map sending the basis element $t_{\rho}$, $\rho \in \Sigma(1)$ to $\rho$ itself.
Let $\pi: \Z^{|\Sigma(1)|} \to  A_1(Y,\Z)$ be any splitting of the above sequence and let $\tilde\varphi$ be the unique $ \Z^{|\Sigma(1)|}$-valued $\Sigma$-piecewise linear function given by sending $\rho$ to $t_{\rho}$.
Define
\begin{align*}
    \varphi = \pi \circ \tilde\varphi: N \to A_1(Y,\Z).
\end{align*}
Then by \cite[Lemma 1.13]{GHK}, the map $\varphi: |\Sigma| \to P^{gp} = A_1(Y,\Z)$ is a strictly $P$-convex $\Sigma$-piecewise linear function.
Consider
\begin{align*}
    P_{\varphi} := \{(n,\varphi(n)+p) : n \in |\Sigma|, p \in P\} \subset N \times P^{gp}.
\end{align*}
The convexity of $\varphi$ implies that $P_{\varphi}$ is a monoid.
The natural inclusion $P \hookrightarrow P_{\varphi}$ given by $p \mapsto (0,p)$ induces a flat morphism
\begin{align*}
    \Phi: \Spec (\C[P_{\varphi}]) \to \Spec (\C[P]).
\end{align*}

Since $Y$ is projective, $\overline{NE}(Y)$ and the nef cone $Nef(Y)$ of $Y$ are dual strictly convex rational polyhedral cones of full dimension by \cite[Theorem 6.3.22]{CLS}.
There is a bijection between the interior lattice points $q \in Nef(Y)$ and strictly convex $\Sigma$-piecewise linear functions, up to adding a linear function, with only integral slopes on $|\Sigma|$.
Let $\abracket{-,-}$ be the pairing between $Nef(Y)$ and $\overline{NE}(Y)$. Then $q$ defines a map of monoids
\begin{align*}
    q: \overline{NE}(Y) \to \N, p \mapsto \abracket{q,p}.
\end{align*}
Since $\overline{NE}(Y)$ is strictly convex and $q$ is in the interior of $Nef(Y)$, the preimage of $0 \in \N$ contains only $0 \in \overline{NE}(Y)$ and hence the function $q \circ \varphi: |\Sigma| \to \R$ is a strictly convex $\Sigma$-piecewise linear function.
In fact, the function $q \circ \varphi$ is exactly the one $q\in Nef(Y)$ represents.

Now fix $q \in Nef(Y)$ such that it represents the function $\psi$ in Section \ref{constructions}. We have the following commutative diagram
\begin{align*}
    \xymatrix
    {
    Q & P_{\varphi} \ar[l] \\
    \N \ar[u] & P \ar[l] \ar[u]
    }
\end{align*}
where $Q := \{(n,q \circ \varphi(n)+p) : n \in |\Sigma|, p \in \N\}$. Therefore we have
\begin{align*}
    \xymatrix
    {
    \Spec(\C[Q]) \ar[d] \ar[r] & \Spec(\C[P_{\varphi}]) \ar[d]\\
    \Spec(\C[\N]) \ar[r]  & \Spec(\C[P]).
    }
\end{align*}
The obvious map $P \to \C[P]$ induces a canonical log structure on $\Spec(\C[P])$, so the above commutative diagram can be upgraded to a commutative diagram of morphisms between log spaces:
\begin{align*}
    \xymatrix
    {
    (\Spec(\C[Q]),Q) \ar[d] \ar[r]  & (\Spec(\C[P_{\varphi}]),P_{\varphi}) \ar[d]\\
    (\Spec(\C[\N]),\N) \ar[r]  & (\Spec(\C[P]),P).
    }
\end{align*}
We denote the family by 
\begin{align*}
\pi: X^{\dag} := \Spec(\C[P_{\varphi}]) \to S^{\dag} := \Spec(\C[P]).
\end{align*}

We arrange the good basis $\varphi_1,\dots,\varphi_{\mu}$ (in Theorem \ref{good basis}) so that $\varphi_1 = 1$ (i.e. of weight degree $0$), $\varphi_2,\dots,\varphi_\nu$ are of weight degree $2$, while the remaining $\varphi_j$'s are of higher weight degrees.
Then $s = \mu - \nu + 1$ is the number of basis elements whose weight degree is not $2$.
Denote $R := \C[P], T := \C[t_1,\cdots,t_s]$ and $\RT := R \otimes_\C T$; in the following, $R$ parametrizes the smoothing of the target $X$ while $T$ parametrizes the deformation of the superpotential $f$.
We define a family of log LG models parametrized by $\ST^{\dag}:=\Spec(\RT) $ (here the log structure is given by the natural map $P \rightarrow \RT$) where the superpotential is given by the function $F = f + t_1 + \sum_{i=2}^s t_i \varphi_{\nu+i-1}$.

Let $m := \C[P-0]$ and $I := \langle t_1,\cdots, t_s \rangle$ be the corresponding maximal ideal of $R$ and $T$ respectively, and $\mathcal{I} = m \otimes T + R \otimes I$ be the maximal ideal of $\RT$. Let $R_k = R/m^{k+1}$ and $\RT_k:=\RT/\mathcal{I}^{k+1}$ be the system of $k^{\text{th}}$-order log rings (with the log structure of $\RT_k$ defined by the canonical monoid homomorphism $P \to  \RT_k$) and $\hat R$ and $\hat \RT$ be the corresponding inverse limits. Let $S^\dag_k$ and $\ST^{\dag}_k$ be the corresponding $k^{\text{th}}$-order log schemes and $\hat S^{\dag}$ and $\hat\ST^\dag$ be their formal counterparts. We denote the $k^{\text{th}}$-order family by $X^\dag_k \rightarrow S^{\dag}_k$ and $\XT^\dag_k \rightarrow \ST^{\dag}_k$ over $S^{\dag}_k$ and $\ST^{\dag}_k$ respectively, with their formal counterparts written as $\hat X^{\dag}$ and $\hat \XT^{\dag}$ respectively. We get a log smooth map $$\phi_k :  \XT^\dag_k \to \ST^\dag_k.$$
 
The $k^{\text{th}}$-order sheaf of log differential and log derivation on $\ST_k^\dag$ will be denoted by $\Omega_{\ST_k^\dag}^\bullet$ and $\Theta_{\ST_k^\dag}^\bullet$ respectively. Similarly, the sheaf of relative log differential and log derivation for the family will be denoted by $\Omega_{ \XT^\dag_k/\ST^\dag_k}^\bullet$ and $\Theta_{\XT^\dag_k/ \ST^\dag_k}^\bullet$ respectively. We define
\begin{align*}
   \mathcal{H}_{F,k} & := H^*(\Theta_{ \XT^\dag_k/ \ST^\dag_k}^\bullet [[u]], \{F,-\} + u \partial),\\
   \mathcal{H}^F_k & := H^*(\Omega_{ \XT^\dag_k/ \ST^\dag_k}^\bullet [[u]], \td F \wedge + u \partial),
\end{align*}
which can be identified by contracting with the relative volume form $\Omega$.

Given $F$ as above, we define
\begin{equation} \label{degree of T}
    \deg(t_i) := 2-\deg{\varphi_i},\quad \forall i
\end{equation}
so that $F$ is homogeneous of weight degree $2$.
To define the weight degree of $z^p \in \C[P]$ for $p \in P$, let us fix a $\Sigma$-piecewise linear function $q_c$ on $|\Sigma|$ such that $q_c(\rho_i) = 2$ for any $1 \leq i \leq d$. Then it defines
\begin{align*}
    q_c: \overline{NE}(Y) \to \Z, \quad p \mapsto \abracket{q_c,p}.
\end{align*}
We define
\begin{equation} \label{degree of P}
    \deg(z^p) := \abracket{q_c,p}.
\end{equation}
It is not difficult to see that this definition of $\deg(z^p)$ is compatible with $\deg(z_i) = 2$ for any $1 \leq i \leq d$ in the sense that if $z^p = \prod_{i=1}^d z_i^{a_i}$, then $\deg(z^p) = 2\sum_i a_i$.

Given (\ref{degree of P}), we get a logarithmic vector field $E_P$ on $\Spec \,\C[P]$. Together with (\ref{degree of T}), we get an Euler vector field $E$ on $\Spec(\C[P_{\varphi}]) \times \C^s$ defined by
\begin{equation*}
    E = E_P + \sum_{i=1}^s \deg(t_i) t_i \frac{\pa}{\pa t_i}.
\end{equation*}
Define
\begin{equation*}
    E_F := E_f + E.
\end{equation*}
Then $E_F F = 2F$ and $[E_F,Q_F] = 0$. In particular, $E_F$ induces a $\Z$-grading on the hypercohomology $\mathcal{H}_{F,k} := H^*(\Theta_{\XT^\dag_k/ \ST^\dag_k}^\bullet [[u]], \{F,-\} + u \partial)$, which is compatible for different values of $k$.

\begin{theorem}[Freeness of Hodge bundle] \label{freeness of Hodge bundle}
    The hypercohomology
    $$\mathcal{H}^F_k := H(\Omega_{ \XT^\dag_k/ \ST^\dag_k}^\bullet [[u]], \td F \wedge + u \partial)$$
    is a graded free $\RT_k[[u]]$-module for each $k$.
\end{theorem}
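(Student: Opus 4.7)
The plan is induction on $k$. For $k=0$, one has $\XT^\dag_0 = X^\dag$ and $\ST^\dag_0 = 0^\dag$, so $\mathcal{H}^F_0 = H(\Omega^\bullet_{X^\dag/0^\dag}[[u]], \td f \wedge + u\pat)$. By Theorems \ref{state space isomorphism} and \ref{Hodge-to-de Rham} (transported via contraction with $\Omega$), this is a graded free $\C[[u]]$-module of rank $\mu$ with homogeneous basis $\{\varphi_i \vdash \Omega\}_{i=1}^\mu$ concentrated in hypercohomological degree $n$. The plan is to carry both the concentration-in-degree-$n$ property \emph{and} freeness of rank $\mu$ through the induction as a single inductive hypothesis.

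For the inductive step, tensor the short exact sequence of $\RT$-modules
\begin{equation*}
0 \to \mathcal{I}^k/\mathcal{I}^{k+1} \to \RT_k \to \RT_{k-1} \to 0
\end{equation*}
with the complex $(\Omega^\bullet_{\XT^\dag_k/\ST^\dag_k}[[u]], \td F \wedge + u\pat)$. Because $F \equiv f \pmod{\mathcal{I}}$ and $\mathcal{I}^k/\mathcal{I}^{k+1}$ is a $\C$-vector space, the leftmost term of the resulting short exact sequence of complexes is canonically identified with $\mathcal{I}^k/\mathcal{I}^{k+1} \otimes_\C (\Omega^\bullet_{X^\dag/0^\dag}[[u]], \td f\wedge + u\pat)$. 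In the associated long exact sequence on hypercohomology, Theorems \ref{state space isomorphism} and \ref{Hodge-to-de Rham} force the central-fiber hypercohomology to sit in degree $n$ only, and the inductive hypothesis supplies the same concentration at order $k-1$; thus every connecting homomorphism vanishes for degree reasons. This yields a graded short exact sequence
\begin{equation*}
0 \to \mathcal{I}^k/\mathcal{I}^{k+1} \otimes_\C \mathcal{H}^F_0 \to \mathcal{H}^F_k \to \mathcal{H}^F_{k-1} \to 0,
\end{equation*}
which in particular propagates the concentration in degree $n$.

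To upgrade this to freeness, lift each good-basis representative $\varphi_i \vdash \Omega$ to the same monomial expression $\tilde\varphi_i := \varphi_i \cdot \Omega \in \Omega^n_{\XT^\dag_k/\ST^\dag_k}$. Since $\Omega$ is top degree and the $\varphi_i$'s are functions, both $\td F \wedge \tilde\varphi_i$ and $\pat \tilde\varphi_i$ vanish identically, so the $\tilde\varphi_i$ define classes in $\mathcal{H}^F_k$. \emph{Generation:} by induction the $\tilde\varphi_i$'s project to an $\RT_{k-1}[[u]]$-basis of $\mathcal{H}^F_{k-1}$, while every element of the image of $\mathcal{I}^k/\mathcal{I}^{k+1} \otimes_\C \mathcal{H}^F_0$ is an $\RT_k[[u]]$-linear combination $\sum_i a_i \tilde\varphi_i$ with $a_i \in \mathcal{I}^k \cdot \RT_k[[u]]$; hence $\{\tilde\varphi_i\}$ generates $\mathcal{H}^F_k$ over $\RT_k[[u]]$. \emph{Linear independence:} given any relation $\sum c_i \tilde\varphi_i = 0$ in $\mathcal{H}^F_k$, reducing modulo $\mathcal{I}^k$ and invoking the inductive freeness forces $c_i \in \mathcal{I}^k \RT_k[[u]]$; the injectivity of the leftmost arrow above, combined with the fact that $\{\varphi_i \vdash \Omega\}$ is a $\C[[u]]$-basis of $\mathcal{H}^F_0$, then forces $c_i \in \mathcal{I}^{k+1} \RT_k[[u]] = 0$. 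Compatibility with the grading is immediate since $[E_F, Q_F] = 0$ and the monomial basis $\{\tilde\varphi_i\}$ is homogeneous.

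The main delicate point is maintaining the concentration-in-degree-$n$ property throughout the induction: it is precisely this property (at the central fiber \emph{and} at the previous order) that collapses the long exact sequence into the displayed short exact sequence and allows the explicit lifting argument to succeed. Once this property is propagated alongside freeness, the rest is a routine graded Nakayama bookkeeping, materially aided by the explicit monomial form of the good basis from Theorem \ref{good basis}.
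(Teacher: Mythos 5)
Your argument is correct, but it is organized differently from the paper's proof, and it is worth recording how. The paper proceeds in two separate steps: first it shows that $H^*(\Omega^\bullet_{\XT^\dag_k/\ST^\dag_k}, \td F\wedge)$ is a free $\RT_k$-module by observing that the monomial classes $\varphi_i\cdot\Omega$ lift, so that the reduction map to $H^*(\Omega^\bullet_{X^\dag/0^\dag},\td f\wedge)$ is surjective (the Nakayama-type bookkeeping behind ``surjectivity implies freeness with basis $\varphi_i$'' is left implicit); second, since that cohomology is then concentrated in degree $n$, it invokes the same $u$-adic spectral sequence degeneration argument as in Theorem \ref{Hodge-to-de Rham}, now for the family, to pass to $\RT_k[[u]]$. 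You instead keep the variable $u$ present throughout and run a single induction on the order $k$, using the long exact sequence attached to $0\to\mathcal{I}^k/\mathcal{I}^{k+1}\to\RT_k\to\RT_{k-1}\to 0$; the degeneration input is used only at the central fiber (Theorems \ref{state space isomorphism} and \ref{Hodge-to-de Rham}), and the concentration in degree $n$ is propagated order by order to collapse the long exact sequences, after which your generation/independence argument with the explicit lifts $\tilde\varphi_i$ is exactly the unwritten Nakayama step of the paper. What your route buys is that you never need the $u$-adic spectral sequence for the thickened family, and the freeness and degree-concentration statements are proved simultaneously with all details visible; what the paper's route buys is brevity, since the $u=0$ statement over $\RT_k$ is isolated first and the $u$-direction is handled wholesale by the degeneration argument. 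Two small points you should state to make your version airtight: tensoring the coefficient sequence with the complex stays exact because the sheaves $\Omega^p_{\XT^\dag_k/\ST^\dag_k}$ are locally free over $\cO_{\XT_k}$ and $\cO_{\XT_k}$ is flat over $\RT_k$ (flatness of the family, base-changed to $\ST_k$); and the identification of the subcomplex with $\mathcal{I}^k/\mathcal{I}^{k+1}\otimes_\C(\Omega^\bullet_{X^\dag/0^\dag}[[u]],\td f\wedge+u\pat)$ uses the base-change property of relative log differentials together with $F\equiv f$ and $z^p\equiv 0$ ($p\in P\setminus\{0\}$) modulo $\mathcal{I}$. Neither is a gap, but both deserve a sentence.
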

\begin{proof}
We begin with the fact that $H(\Omega_{ \XT^\dag_k/ \ST^\dag_k}^\bullet , \td F \wedge)$ is a free $\RT_k$-module for each $k$. Notice that the map
$$
 H^*(\Omega_{\XT^\dag_k/ \ST^\dag_k}^\bullet, \td F \wedge) \rightarrow  H^*(\Omega_{X^{\dag}/ 0^\dag}^\bullet , \td f \wedge)
$$
induced by taking quotient $\RT/\mathcal{I}^{k+1} \rightarrow \RT/\mathcal{I} \cong \mathbb{C}$ is always surjective. This is because the product of monomials $\varphi_i = P(\tau_i)$'s with the relative volume form $\Omega$ lift to $H^*(\Omega_{\XT^\dag_k/ \ST^\dag_k}^\bullet, \td F \wedge) $. This further shows that $H^*(\Omega_{\XT^\dag_k/ \ST^\dag_k}^\bullet, \td F \wedge) $ is free $\RT_k$-module for each $k$, with a basis $\varphi_i$'s. 
By the same degree reason and spectral sequence argument as in Theorem \ref{Hodge-to-de Rham}, we conclude that
$$H(\Omega_{ \XT^\dag_k/ \ST^\dag_k}^\bullet[[u]] , \td F \wedge + u\partial)$$
is a free $\RT_k[[u]]$-module.
\end{proof}

\subsection{Gauss-Manin connection and the higher residue pairing}\label{subsec:GM_connection_and_higher_residue}

Define a filtration $\mathfrak{F}_{\geq r}$ on $\Omega_{\hat \XT^\dag}^\bullet$ by the image of the map $\hat \phi^*(\Omega_{\hat \ST^\dag}^r) \otimes \Omega_{\hat \XT^\dag}^\bullet \rightarrow \Omega_{\hat \XT^\dag}^\bullet$ given by taking wedge product. Therefore we have
\begin{align*}
    \mathfrak{F}_{\geq 0}\Omega_{\hat \XT^\dag}^\bullet/\mathfrak{F}_{\geq 1}\Omega_{\hat \XT^\dag}^\bullet \cong & \Omega_{\hat \XT^\dag/\hat \ST^\dag}^{\bullet},\\
    \mathfrak{F}_{\geq 1}\Omega_{\hat \XT^\dag}^\bullet/\mathfrak{F}_{\geq 2}\Omega_{\hat \XT^\dag}^\bullet \cong & \hat \phi^*(\Omega_{\hat \ST^\dag}^1) \otimes \Omega_{\hat \XT^\dag/\hat \ST^\dag}^{\bullet-1}.
\end{align*}
Consider the exact sequence of complexes 
$$0 \to \mathfrak{F}_{\geq 1}\Omega_{\hat \XT^\dag}^\bullet/\mathfrak{F}_{\geq 2}\Omega_{\hat \XT^\dag}^\bullet \to\mathfrak{F}_{\geq 0}\Omega_{\hat \XT^\dag}^\bullet/\mathfrak{F}_{\geq 2}\Omega_{\hat \XT^\dag}^\bullet \to \mathfrak{F}_{\geq 0}\Omega_{\hat \XT^\dag}^\bullet/\mathfrak{F}_{\geq 1}\Omega_{\hat \XT^\dag}^\bullet \to 0,$$
where all the three complexes have differential $\td F \wedge + u \pa$.

The connecting morphism on hypercohomology induces a flat connection
\begin{equation*}
    \nabla: \mathcal{H}^F \to \frac{1}{u} \Omega_{\hat \ST^\dag}^1 \otimes \mathcal{H}^F.
\end{equation*}
Explicitly, for $s \in \mathcal{H}^F$ and $v \in \Theta_{\hat \ST^\dag}^1$, we have
\begin{equation}\label{eqn:GM_connection_formula}
    \nabla_{v} s = \frac{1}{u}\iota_v (\td F \wedge + u \partial) \, \tilde{s},
\end{equation}
where $\tilde{s}$ is a lifting of a representative of $s$ to $\mathfrak{F}_{\geq 0}\Omega_{\hat \XT^\dag}^\bullet/\mathfrak{F}_{\geq 2}\Omega_{\hat \XT^\dag}^\bullet$. Now $\Omega$ can be viewed as a global nowhere vanishing element in $\Omega_{\hat \XT^\dag/\hat \ST^\dag}^n$.
By conjugating with $\Omega$, $\nabla$ induces a flat connection, which will still be denoted by $\nabla$, on $\mathcal{H}_F$ satisfying
\begin{equation*}
    (\nabla_{v} \varphi) \vdash \Omega = \frac{1}{u}\iota_v (\td F \wedge + u \partial) \, (\varphi \vdash \Omega)
\end{equation*}
for $\varphi \in \mathcal{H}_F$ and $v \in \Theta_{\hat \ST^\dag}^1$.

The connection $\nabla$ can be extended to the $u$-direction by defining
\begin{equation*}
    \nabla_{u \frac{\pa}{\pa u}} s := \left(u \frac{\pa}{\pa u} + p - \frac{F}{u}\right)s, 
    \text{ for }s \in \Theta_{\hat \XT^\dag/\hat \ST^\dag}^p [[u]].
\end{equation*}
It can be verified that $\nabla_{u \frac{\pa}{\pa u}}$ is well-defined on $\mathcal{H}_F$ and $[\nabla_v, \nabla_{u \frac{\pa}{\pa u}}]=0$. This extended connection is called the \emph{extended Gauss-Manin connection}. We can equally describe the extended Gauss-Manin connection on $\mathcal{H}_{F,k}$ for each $k$ using the same formulation. 

For any $v\in \Theta^1_{\hat \ST^\dag}$, the Gauss-Manin connection $\nabla$ induces a $\C((u))$-linear map $\nabla_v^{(0)}$ on $\mathcal{H}_{f,\pm}$, which is called the \emph{residue} of $\nabla$. It exists because the Gauss-Manin connection $\nabla$ has log poles and hence nontrivial monodromy groups.
Explicitly,
\begin{align*}
	(\nabla^{(0)} \varphi) \vdash \Omega = \frac{1}{u} (\td f + u \pa) \left(\varphi \vdash \Omega \right).
\end{align*}
Let $\varphi_1,\cdots,\varphi_\mu$ be the good basis in Theorem \ref{good basis}.

\begin{lemma}\label{residue of Gauss-Manin}
For any $v\in \Theta^1_{\hat \ST^\dag}$, there is a nilpotent matrix $N_v \in \C^{\mu\times \mu}$ such that
\begin{equation*}
    u\nabla_v^{(0)} (\varphi_1,\cdots,\varphi_\mu) = (\varphi_1,\cdots,\varphi_\mu) N_v.
\end{equation*}
\end{lemma}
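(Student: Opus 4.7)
The plan is to compute $u\nabla_v^{(0)}(\varphi_j)$ directly from the Gauss-Manin formula \eqref{eqn:GM_connection_formula} and read off $N_v$ by expanding the result in the good basis. First I would lift each $\varphi_j = P(\tau_j)$ trivially: since $\varphi_j$ is a monomial in the fiber $z$-coordinates, the same expression gives a canonical representative $\tilde\varphi_j$ in $\mathfrak{F}_{\geq 0}\Omega^\bullet_{\hat\XT^\dag}/\mathfrak{F}_{\geq 2}$. Applying \eqref{eqn:GM_connection_formula} yields
\[
u\nabla_v \tilde\varphi_j = (\iota_v \td F)\,\tilde\varphi_j + u\,\iota_v(\pa \tilde\varphi_j).
\]

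Next I would restrict to the central fiber. The second term vanishes there because $\varphi_j$ has no dependence on the unfolding parameters $t_i$ nor on the log coordinates $q^p$ coming from $P$ (it is a polynomial in the $z$-variables alone). Hence
\[
u\nabla_v^{(0)}(\varphi_j) = (\iota_v \td F)\big|_0 \cdot \varphi_j \quad \text{in } \mathcal{H}_{f,\pm},
\]
where the multiplication is performed in the Jacobi algebra $\mathcal{H}_f^{(0)} \cong H^*(Y,\C)$ via Theorem \ref{state space isomorphism}. For an unfolding direction $v = \pat_{t_i}$ we have $(\iota_v \td F)|_0 = \varphi_i$; for a log derivation tangent to $\hat S^\dag$, the toric relations $z_l = \prod_{i=1}^n z_i^{a_{il}} q^{m_l}$ provide an explicit expansion of $(\iota_v \td F)|_0$ as an element of $\mathcal{H}_f^{(0)}$.

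The entries $(N_v)^k_j$ of $N_v$ are then obtained by expanding this product in the good basis, and they lie in $\C$ because the good basis is $u$-free by Theorem \ref{good basis} and the product lands in $\mathcal{H}_f^{(0)}$. For nilpotency, I would invoke the $\Z$-grading induced by the Euler operator $E_F$: the operator $u\nabla_v$ carries weight $d_v + 2$ (where $d_v$ is the weight of $v$), so homogeneity forces $(N_v)^k_j = 0$ unless $\deg \varphi_k - \deg \varphi_j = d_v + 2$. Ordering $\{\varphi_i\}$ by increasing weight degree makes $N_v$ strictly upper triangular once the induced degree shift is positive, from which nilpotency follows.

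The main obstacle I anticipate is verifying nilpotency uniformly across all $v \in \Theta^1_{\hat\ST^\dag}$, particularly in borderline cases where the naive degree shift degenerates. The Moving Lemma \ref{algebraic moving lemma} combined with the specific combinatorial shape of the good basis should provide the control needed, with the positivity of the smoothing-direction weights and the nilpotent structure of $H^{>0}(Y,\C)$ working in tandem to close the argument.
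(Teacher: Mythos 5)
Your overall route is the same as the paper's: lift $\varphi_j$ by its monomial expression, apply the Gauss--Manin formula, identify $u\nabla_v^{(0)}\varphi_j$ with multiplication by the base component of $\iota_v\,\td F$ at the origin (giving $\sum_l c_l\, z_l\varphi_j$ for log directions and $\varphi_i\varphi_j$ for unfolding directions), expand in the basis, and get nilpotency from the positive weight shift. The genuine gap is at the step you dispose of in one line: why the expansion coefficients lie in $\C$ rather than in $\C[[u]]$. Neither of your two stated reasons works. Theorem \ref{good basis} only says that the values $\KK_f(\varphi_i,\varphi_j)$ are constants; it says nothing about the structure constants of multiplication by $z_l$ in the $\C[[u]]$-module $\mathcal{H}_f$. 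And ``the product lands in $\mathcal{H}_f^{(0)}$'' conflates $\mathcal{H}_f$ with $\mathcal{H}_f/u\mathcal{H}_f$: every class reduces mod $u$, but that does not preclude $u$-corrections in its expansion. Homogeneity does not close this either, because $\deg(u)=2$: writing $z_l\varphi_j=\sum_k c_k(u)\varphi_k$, weight homogeneity only forces each $c_k$ to be a single monomial $c_{k,m}u^m$ with $2m=\deg\varphi_j+2-\deg\varphi_k$, so terms like $u\,\varphi_k$ with $\deg\varphi_k=\deg\varphi_j$ are a priori allowed. This is exactly where the paper does real work: the relations produced by the Moving Lemma \ref{algebraic moving lemma} are $u$-free (the $u\pa$-contribution vanishes identically for the elements $z_1\cdots\hat z_i\cdots z_k\,\tilde\theta_i$), and combining them with the descending induction of \cite[p.~107]{Ful} expresses every cone monomial $P(\sigma)$ --- hence $z_l\varphi_j$ and $\varphi_i\varphi_j$ --- as a $\C$-linear combination of the $\varphi_k$'s inside $\mathcal{H}_f$. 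You mention the Moving Lemma only as a tool for the nilpotency discussion, not here, so the crux of the lemma is unproved in your write-up.

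A secondary imprecision: your claim that $u\,\iota_v(\pa\tilde\varphi_j)$ vanishes ``because $\varphi_j$ is a polynomial in the $z$-variables alone'' is not literally correct when the monomial $\varphi_j$ involves some $z_l$ with $l>n$, since $z_l=\prod_i z_i^{a_{il}}z^{p_l}$ makes $\td\log z_l$ carry a base component, so the trivial lift does produce a base-direction term under $\pa$. The fix is the one the paper uses implicitly (``assume for simplicity''): choose the lift adapted to a maximal cone containing $\tau_j$, i.e.\ use the frame $\frac{\td z_{i_1}}{z_{i_1}}\wedge\cdots\wedge\frac{\td z_{i_n}}{z_{i_n}}$ for that cone, after which the $u\pa$-term does vanish and the computation proceeds as you describe; alternatively one checks that the discrepancy between lifts is exact in $\mathfrak{F}_{\geq 1}/\mathfrak{F}_{\geq 2}$. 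Your nilpotency argument coincides with the paper's (the residue raises weight degree, the degree being bounded), and your worry about degenerate degree shifts is legitimate but cannot be ``closed'' in the direction $\partial_{t_1}$ dual to $\varphi_1=1$, where the residue is multiplication by the unit; that borderline case is an issue with the statement's uniformity rather than with your strategy.
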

\begin{proof}
    Assume for simplicity that $\varphi = z_1\cdots z_k$, $k\leq n$ is an element of the good basis. Then by definition,
    \begin{align*}
        (u\nabla^{(0)} \varphi) \vdash \frac{\td z_1}{z_1} \wedge \cdots \wedge \frac{\td z_n}{z_n}
        =& (\td f + u \pa) \left(z_1 \cdots z_k \frac{\td z_1}{z_1} \wedge \cdots \wedge \frac{\td z_n}{z_n}\right) \\
        =& \sum_{l=n+1}^d z_l z_1 \cdots z_k \frac{\td z_l}{z_l} \wedge \frac{\td z_1}{z_1} \wedge \cdots \wedge \frac{\td z_n}{z_n}.
    \end{align*}
    Using the equation $z_l = \prod_{i=1}^n z_i^{a_{li}} z^{p_l}$ where $p_l \in P-0$, we have $\frac{\td z_l}{z_l} = \frac{\td z^{p_l}}{z^{p_l}} + \sum_{i=1}^n a_{li} \frac{\td z_i}{z_i}$. Hence
    \begin{equation*}
        u\nabla^{(0)}_v \varphi = \sum_{l=n+1}^d \frac{\td z^{p_l}}{z^{p_l}}(v) \cdot z_l z_1 \cdots z_k,
    \end{equation*}
    and it suffices to show $z_l z_1 \cdots z_k$ can be written as a $\C$-linear combination of the good basis.
    This can be done by an argument parallel to that of page 107 in \cite{Ful}, using the Moving Lemma \ref{algebraic moving lemma} and descending induction on the subscript $i$ of $\varphi_i$.
    Thus $N_v \in \C^{\mu\times \mu}$ for any $v\in \Theta^1_{\hat \ST^\dag}$.
    To see that $N_v$ is nilpotent, we only need to notice that $u\nabla_v^{(0)}$ always increases the weight degree by $2$.
\end{proof}

Now we define a trace map for sections in $\mathcal{H}_F$, which will turn out to be a family version of that for $\mathcal{H}_f$.
Since $F = f + t_1 + \sum_{i=2}^s t_i \varphi_{\nu+i-1}$, we can write 
$$\td F = \sum_{i=1}^d z_i\frac{\pa F}{\pa z_i} \frac{\td z_i}{z_i},$$
where $z_i\frac{\pa F}{\pa z_i}$ is a polynomial in $z_1,\cdots,z_d$ and $t$.
By the change of frame relations (\ref{relation of log differential}) that still hold in $\Omega_{\hat \XT^\dag/\hat \ST^\dag}^1$, we have
\begin{equation}\label{eqn:log_differential_change_of_frame}
\left(\frac{\td z_{1}}{z_{1}}, \cdots, \frac{\td z_{n}}{z_{n}}\right) C_i = \left(\frac{\td z_{i_1}}{z_{i_1}}, \cdots, \frac{\td z_{i_n}}{z_{i_n}}\right) 
\end{equation}
as in the proof of Lemma \ref{V_f identity}. Therefore $\td F$ can be written as
\begin{equation*}
    \td F = \sum_{k=1}^n h_k \frac{\td z_k}{z_k},
\end{equation*}
where $h_k$ is again a polynomial of $z_1,\cdots,z_d$ and $t$. 

Let $X_i$ be an $n$-dimensional component of $\hat X_T$ with the corresponding cone generated by the rays $\rho_{i_1},\cdots,\rho_{i_n}$. We express $h_k|_{X_i}$ as a meromorphic function of $z_{i_1},\cdots,z_{i_n}$, $z^{p_i}$'s and $t_i$'s using $z_l = \prod_{i=1}^n z_i^{a_{li}} z^{p_l}$. We get a meromorphic differential form on $X_i \times \hat \ST$, which we denote by $(\td F)_i$.
Clearly, $(\td F)_i$ and $(\td F)_j$ can be transformed to each other using equation \eqref{eqn:log_differential_change_of_frame} and the defining equation of the map $\Spec\, \C[P_{\varphi}] \to \Spec\, \C[P]$, follows from the fact that $\td F$ is a global log differential in $\Omega_{\hat \XT^\dag/\hat \ST^\dag}^1$.
When $z^{p_i}=t_i=0$, $\td F$ reduces to $\td f$.

Let $h_k$ be as above and $\bar g_k$ be defined as in (\ref{V_f}), we formally write
\begin{equation*}
    V_F := \frac{1}{\sum_{k=1}^n h_k \bar g_k} \left(\sum_{k=1}^n \bar g_k \theta_k + \sum_{l=n+1}^d \left(\sum_{k=1}^n a_{kl} \bar g_k\right) \theta_l \right) = \frac{1}{\sum_{k=1}^n h_k \bar g_k} \sum_{k=1}^n \bar g_k \tilde{\theta}_k.
\end{equation*}
It is only a formal expression but we can assign a meromorphic derivation on each component $X_i \times \hat \ST$ of $\hat \XT$ as follows.
Let $X_i$ be as before, for any $j$ such that $j \neq i_s,\, \forall 1 \leq s \leq n$, we treat $\bar z_j|_{X_i} = 0$ in the defining expression of $\bar g_k$ so that $\bar g_k|_{X_i}$ is a linear function of $\bar z_{i_1}, \cdots,\bar z_{i_n}$. In particular, the only common zeros of $\bar g_k|_{X_i}$ is the origin.
By writing $h_k|_{X_i}$ as a meromorphic function and making use of the change of frame
$$
(\tilde{\theta}_{i_1},\cdots , \tilde{\theta}_{i_n}) C_i^T  = (\tilde{\theta}_1,\cdots, \tilde{\theta}_n),
$$
we define $(V_F)_i$ to be a section of $\A^{0,0}_{X_i} \otimes_{\cO_{X_i}} (\Theta_{\hat \XT^\dag/\hat \ST^\dag}^1)|_{X_i}(\star D_i)$.
Since $z^{p_i}$'s and $t_i$'s are formal variables, $(V_F)_i$ can be written as a meromorphic derivation on $X_i-0$ valued in $\hat \RT$.
When $z^{p_i}=t_i=0$, $(V_F)_i$ reduces and glues to $V_f$.
However, due to the presence of $\bar z_i$'s variables, the above expression for $V_F$ does not define a global derivation on $\hat \XT$.
Nevertheless, we have the following result that imitates Lemma \ref{V_f identity}.

\begin{lemma}
For any $i$, let $\{F,-\}_i$ be defined using $(\td F)_i \wedge$ by conjugating with $\Omega$. Then
\begin{equation*}
    [\{F,-\}_i, (V_F)_i] = 1 \quad \text{on  } X_i-0,
\end{equation*}
as an action on sections of $\A^{0,*}_{X_i} \otimes_{\cO_{X_i}} (\Theta_{\hat \XT^\dag/\hat \ST^\dag}^*)|_{X_i}(\star D_i)$. 
\end{lemma}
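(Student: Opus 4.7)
The plan is to mimic the proof of Lemma \ref{V_f identity} component-by-component, replacing $(f, g_k, V_f)$ by $(F, h_k, V_F)$ and tracking the fact that $h_k$ and $\td F$ are now $\hat{\RT}$-valued (so one must work with meromorphic / formal-power-series expressions rather than smooth ones).

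First, I restrict to a fixed top-dimensional component $X_i$ whose generating rays are $\rho_{i_1},\dots,\rho_{i_n}$, so that we have the frame-change matrix $C_i$ (the same one used in the proof of Lemma \ref{V_f identity}) satisfying
\begin{equation*}
\left(\tfrac{\td z_1}{z_1},\dots,\tfrac{\td z_n}{z_n}\right) C_i = \left(\tfrac{\td z_{i_1}}{z_{i_1}},\dots,\tfrac{\td z_{i_n}}{z_{i_n}}\right), \qquad (\tilde\theta_{i_1},\dots,\tilde\theta_{i_n})\,C_i^T = (\tilde\theta_1,\dots,\tilde\theta_n).
\end{equation*}
Using the first of these, the local representative of $\td F$ on $X_i$ becomes
\begin{equation*}
(\td F)_i = \left(\tfrac{\td z_{i_1}}{z_{i_1}},\dots,\tfrac{\td z_{i_n}}{z_{i_n}}\right) C_i^{-1}(h_1,\dots,h_n)^T,
\end{equation*}
and conjugating with $\Omega$ gives
\begin{equation*}
\{F,-\}_i = \left(\tfrac{\pa}{\pa \tilde\theta_{i_1}},\dots,\tfrac{\pa}{\pa\tilde\theta_{i_n}}\right) C_i^{-1} (h_1,\dots,h_n)^T,
\end{equation*}
with $[\tfrac{\pa}{\pa\tilde\theta_{i_s}},\tilde\theta_{i_t}\wedge]=\delta_{st}$.

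Next, using the second frame-change, I rewrite
\begin{equation*}
(V_F)_i = \tfrac{1}{\sum_k h_k\bar g_k}(\bar g_1,\dots,\bar g_n)\,C_i\,(\tilde\theta_{i_1},\dots,\tilde\theta_{i_n})^T.
\end{equation*}
Then the commutator is a straightforward computation: since $\{F,-\}_i$ is a pure contraction/derivation and $(V_F)_i$ is a pure multiplication in $\tilde\theta_{i_s}$, only the contraction terms survive and the matrices telescope to the identity,
\begin{equation*}
[\{F,-\}_i,(V_F)_i] = \tfrac{1}{\sum_k h_k\bar g_k}(\bar g_1,\dots,\bar g_n)\,C_i\,C_i^{-1}(h_1,\dots,h_n)^T = \tfrac{\sum_k h_k\bar g_k}{\sum_k h_k\bar g_k} = 1.
\end{equation*}

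The main (though mild) obstacle is to justify that $\tfrac{1}{\sum_k h_k\bar g_k}$ is well-defined as a section of $\A^{0,0}_{X_i}\otimes_{\OO_{X_i}}\hat{\RT}(\star D_i)$. When $z^{p_l} = 0$ and $t_j = 0$ we have $h_k = g_k$ and the denominator equals $\sum_k g_k\bar g_k$, which vanishes on $X_i$ only at the origin (by smoothness of $\Sigma$). Writing $\sum_k h_k\bar g_k = \sum_k g_k \bar g_k + (\text{terms in the maximal ideal of }\hat{\RT})$, one inverts geometrically in $\hat{\RT}$ to obtain an element of $\A^{0,0}_{X_i-0}\hat\otimes\hat{\RT}$, and the order of poles along $D_i$ stays finite at each order in the formal parameters. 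With this interpretation, the algebraic identity above takes place in the space of operators on $\A^{0,*}_{X_i}\otimes_{\OO_{X_i}}(\Theta_{\hat\XT^\dag/\hat\ST^\dag}^{*})|_{X_i}(\star D_i)$, completing the proof.
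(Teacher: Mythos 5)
Your proposal is correct and follows essentially the same route as the paper, which simply notes that the proof is parallel to that of Lemma \ref{V_f identity}; your component-wise frame-change computation with $C_i$ and the telescoping of $C_i C_i^{-1}$ is exactly that parallel argument, and your remark on inverting $\sum_k h_k\bar g_k$ formally in $\hat\RT$ matches the paper's discussion preceding the lemma.
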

\begin{proof}
    With the above assumption on notation, the proof is parallel to that of Lemma \ref{V_f identity}.
\end{proof}

Let $T_{\rho,i}^u$ and $R_{\rho,i}^u$ be defined as in the proof of Proposition \ref{homotopy formula2}, with $V_f$ replaced by $(V_F)_i$. Then we have
\begin{align*}
    [Q_F,R_{\rho,i}^u] = 1 - T_{\rho,i}^u \qquad \text{on} \quad \Gamma(X_i-0, \A^{0,q}_{X_i} \otimes_{\cO_{X_i}} \Omega^p_{\hat \XT^\dag/\hat \ST^\dag}|_{X_i}(\star D_i)).
\end{align*}
For $\alpha \in \Theta^\bullet_{\hat \XT^\dag/\hat \ST^\dag}$, we can define a trace as
\begin{equation*}
    \Tr(\alpha) := \sum_i \dashint_{X_i} (T_\rho^u \alpha \vdash \Omega) \wedge \Omega \in \hat \RT[[u]],
\end{equation*}
where $\dashint$ denotes the \emph{regularized integral} in Appendix \ref{Section of regularized integral} (see Definition \ref{regularized integral}).
To see why this regularized version of integral is needed, one only needs to note that the integrand may have arbitrary orders of poles.

\begin{proposition} \label{vanishing of trace 2}
    The trace map satisfies $\Tr((\{F, -\} + u \pa) \beta) = 0, \, \forall \beta \in \Theta^\bullet_{\hat \XT^\dag/\hat \ST^\dag}$. Hence the trace map descends to $\mathcal{H}_F$.
\end{proposition}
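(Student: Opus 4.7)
The plan is to imitate the vanishing argument of Proposition~\ref{Vanishing of trace} for the unperturbed case, but to build in the homotopy operator $T_{\rho,i}^u$ and to work at the level of the regularized integral of Appendix~\ref{Section of regularized integral}, since the formal expansion of the integrand in the deformation parameters $z^p$, $t_i$ can exhibit poles of arbitrary order along the singular divisor $D_i \subset X_i$. The first step is to show that $T_{\rho,i}^u$ commutes with $Q_F := \bar\pa + \{F,-\} + u\pa$ on each $X_i$. This follows formally from $[Q_F, R_{\rho,i}^u] = 1 - T_{\rho,i}^u$ and $Q_F^2 = 0$ via the graded Jacobi identity. Combined with $\bar\pa\beta = 0$ (valid because $\beta$ is an algebraic polyvector field in $\Theta^\bullet_{\hat\XT^\dag/\hat\ST^\dag}$), this yields
\[
T_{\rho,i}^u\bigl((\{F,-\}+u\pa)\beta\bigr) \;=\; Q_F\bigl(T_{\rho,i}^u \beta\bigr) \qquad \text{on } X_i.
\]

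Next, writing $\tilde\beta_i := T_{\rho,i}^u \beta$ and conjugating by the global, $\bar\pa$- and $\pa$-closed relative top form $\Omega$, I would rewrite the trace as
\[
\Tr\bigl((\{F,-\}+u\pa)\beta\bigr) \;=\; \sum_i \dashint_{X_i} \bigl(\bar\pa + \td F\wedge + u\pa\bigr)\bigl[(\tilde\beta_i\vdash\Omega)\wedge\Omega\bigr].
\]
A bidegree count then kills the $\td F\wedge$- and $u\pa$-terms: the $(n,n)$-contribution from these would have to come from the PV-bidegree $(n+1,n)$ component of $\tilde\beta_i$, which is automatically zero. The remaining $\bar\pa$-integral is controlled by the regularized analogue of Lemma~\ref{Vanishing of integral} developed in Appendix~\ref{Section of regularized integral}, giving
\[
\Tr\bigl((\{F,-\}+u\pa)\beta\bigr) \;=\; -2\pi\sqrt{-1}\sum_{(i,j)\in E(\Delta)} \dashint_{D_{ij}}\bigl(\Res_{D_{ij}}\eta_i + \Res_{D_{ji}}\eta_j\bigr),
\]
where $\eta_i := (\tilde\beta_i\vdash\Omega)\wedge\Omega$.

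The concluding step, and the main obstacle, is to show that the pair of Poincar\'e residues $\Res_{D_{ij}}\eta_i + \Res_{D_{ji}}\eta_j$ vanishes along each intersection divisor. Conceptually this should follow from the same log change-of-frame relations \eqref{relation of log derivation} that assemble $\Theta^\bullet_{\hat\XT^\dag/\hat\ST^\dag}$ out of its component pieces, exactly as in the smooth case of Lemma~\ref{Vanishing of integral}. The subtlety is that $T_{\rho,i}^u$ is built from the \emph{component-local} derivation $(V_F)_i$, so $\tilde\beta_i$ and $\tilde\beta_j$ are not a priori restrictions of a single global polyvector field to $D_{ij}$; one must verify that the discrepancies contribute only boundary terms whose residues cancel. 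I expect this to reduce to a direct verification using the explicit formula for $(V_F)_i$ in each toric chart, carried out inductively in the orders of the formal expansion in $z^p$, $t_i$, $u$, so that the smooth-case cancellation of residues propagates through the regularized-integral framework of Appendix~\ref{Section of regularized integral}.
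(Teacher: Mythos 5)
Your reduction is correct and is in fact the same route the paper takes: the commutation $T^u_{\rho,i}Q_F=Q_FT^u_{\rho,i}$ (which the paper asserts and you derive cleanly from $[Q_F,R^u_{\rho,i}]=1-T^u_{\rho,i}$ and $Q_F^2=0$), the use of $\bar\pa\beta=0$ for algebraic $\beta$, the bidegree count discarding the $\td F\wedge$ and $u\pa$ contributions to the $(n,n)$-part of the integrand, and the regularized Stokes formula (Theorem \ref{Stokes formula 2}) converting the trace of a $Q_F$-exact element into a sum of residues along the divisors $D_{ij}$. Up to this point your argument matches the paper's proof of Proposition \ref{vanishing of trace 2}.

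However, the concluding step — the cancellation $\Res_{D_{ij}}\eta_i+\Res_{D_{ji}}\eta_j=0$ — is precisely the substance of the proposition, and you leave it as an expectation rather than proving it. This is a genuine gap, and it is not merely a bookkeeping extension of Lemma \ref{Vanishing of integral}: here the integrand has poles of arbitrary order along $D_{ij}$, so the relevant residue is the higher Poincar\'e residue of \eqref{eqn:higher_residue_definition_decomposition}, which involves holomorphic normal derivatives of the coefficients, and, as you yourself note, $T^u_{\rho,i}$ and $T^u_{\rho,j}$ are built from the component-local data $(V_F)_i$, $(V_F)_j$ and the non-holomorphic cut-off $\rho$, so $\eta_i$ and $\eta_j$ are not restrictions of one global object. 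The paper closes this gap with two concrete inputs which your sketch does not supply: (i) writing $Q(\rho)=\td\bar x\wedge\mathfrak{a}_i+\td x\wedge\mathfrak{b}_i+\mathfrak{c}_i$ in the normal coordinate $x$ of $X_i$ along $D_{ij}$ (and similarly on $X_j$), one checks that $\rho$, $\mathfrak{b}_i$, $\mathfrak{c}_i$ and the $\bar g_k$'s have vanishing holomorphic normal derivatives of all positive orders along $D_{ij}$ and matching restrictions from the two sides, so that in extracting residues they may be treated as constant in the normal variable (the $\td\bar x$-terms being discarded by \eqref{vanishing residue}); and (ii) under the substitution $y=r/x$ coming from $\hat\XT\subset\{xy=r\}$, the local expression $(T^u_{\rho,j}\varphi\wedge\Omega)_j$ is carried into $(T^u_{\rho,i}\varphi\wedge\Omega)_i$, whence the coefficient of $\frac{\td x}{x}$ in the Laurent expansion in $x$ is the negative of the coefficient of $\frac{\td y}{y}$ in the expansion in $y$, which is exactly the asserted cancellation. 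An induction "order by order in $z^p$, $t_i$, $u$" does not by itself produce this; without these structural facts (or a substitute for them) your proof stops short at its decisive point.
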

\begin{proof}
    Since $T^u_{\rho,i} Q_F = Q_F T^u_{\rho,i}$ for $Q_F  = dF\wedge + \bar\partial + u\partial$, we need only to show that for any $\varphi = \beta \vdash \Omega \in \Omega^{n-1}_{\hat \XT^\dag/\hat \ST^\dag}$,
    \begin{align*}
        \sum_i \dashint_{X_i} Q_F T^u_{\rho,i} \varphi \wedge \Omega = 0,
    \end{align*}
    which is equivalent to
    \begin{align*}
        \sum_i \dashint_{X_i} d (T^u_{\rho,i} \varphi \wedge \Omega) = 0.
    \end{align*}
    By Theorem \ref{Stokes formula 2}, it is sufficient to show that the higher residue satisfies
    \begin{equation*}
        \Res_{D_{ij}} (T^u_{\rho,i} \varphi \wedge \Omega)_i + \Res_{D_{ij}} (T^u_{\rho,j} \varphi \wedge \Omega)_j = 0
    \end{equation*}
    for any pair of $n$-dimensional components of $X_i, X_j$ such that $D_{ij} = X_i \cap X_j$ has dimension $n-1$; here the subscripts $i$ and $j$ indicate restriction to the corresponding component of $\hat \XT$.

    Assume that $X_i$ has free coordinates $(x, w_1,\cdots,w_{n-1})$ and $X_j$ has free coordinates $(y, w_1,\cdots,w_{n-1})$.
    As $\Sigma$ is smooth, $\hat \XT$ is contained in $\{xy = r\}$ for some function $r = r(w_1,\cdots,w_{n-1}, z^p)$ which can be meromorphic in the $w_1,\cdots, w_{n-1}$ variables.
    By equation \eqref{vanishing residue}, it suffices to show that
    \begin{equation} \label{residue cancel}
        \Res_{D_{ij}} ((T^u_{\rho,i} \varphi \wedge \Omega)_i) + \Res_{D_{ij}} ((T^u_{\rho,j} \varphi \wedge \Omega)_j) = 0.
    \end{equation}
To compute the residues, we express $(T^u_{\rho,i} \varphi \wedge \Omega)_i = \sum_{p,l} \phi_{p,l} z^{p} t^l$ (here $l$'s are multi-indices) and every $\phi_{p,l}$ as
$$
\phi_{p,l} = d \bar x \wedge \alpha_{p,l} + \frac{d\log x}{x^{k-1}} \wedge \beta_{p,l} + \gamma_{p,l}
$$
as in equation \eqref{eqn:higher_residue_definition_decomposition}. The residue is obtained from applying $(\frac{\pa}{\pa x})^{k-1}$ to $\beta_{p,l}$ and then restricting on $D_{ij}$. It is similar for $(T^u_{\rho,j} \varphi \wedge \Omega)_j = \sum_{p,l} \psi_{p,l} z^{p} t^l$

    According to the expression of $T^u_{\rho,i}$ in the proof of Proposition \ref{homotopy formula2}, we need to consider $\rho$ and $Q(\rho) = (\bar \pa + u \pa) \rho$. By definition of $\rho$ we have $\rho_i |_{D_{ij}} = \rho_j |_{D_{ij}}$ and $(\frac{\pa}{\pa x})^k (\rho_i)|_{D_{ij}} = 0 = (\frac{\pa}{\pa y})^k (\rho_j)|_{D_{ij}}$ for any $k>0$. Writing $Q(\rho_i) = d\bar x \wedge \mathfrak{a}_i + dx \wedge \mathfrak{b}_i+\mathfrak{c}_i$ and $Q(\rho_j) = d\bar y \wedge \mathfrak{a}_j + dy \wedge \mathfrak{b}_j+\mathfrak{c}_j$, we see that $(\frac{\pa}{\pa x})^k (\mathfrak{b}_i)|_{D_{ij}} = 0 = (\frac{\pa}{\pa y})^k (\mathfrak{b}_j)|_{D_{ij}}$ for any $k\geq 0$, $\mathfrak{c}_i |_{D_{ij}} = \mathfrak{c}_j |_{D_{ij}}$ and $(\frac{\pa}{\pa x})^k (\mathfrak{c}_i)|_{D_{ij}} = 0 = (\frac{\pa}{\pa y})^k (\mathfrak{c}_j)|_{D_{ij}}$ for any $k>0$. 
    For any $1 \leq k \leq n$, we have $(\bar g_k)_i|_{D_{ij}} = (\bar g_k)_j|_{D_{ij}}$ and $(\frac{\pa}{\pa x})^k((\bar g_k)_i) = 0 = (\frac{\pa}{\pa y})^k ((\bar g_k)_j)$ for any $k>0$. From these, we can replace the terms involving $\rho$, $\mathfrak{b}_i$, $\mathfrak{c}_i$ and $(\bar g_k)_i$ as constant functions along the $x$ variable in the computation of $\Res_{D_{ij}} ((T^u_{\rho,i} \varphi \wedge \Omega)_i)$. Their restrictions to $D_{ij}$ agree with $\rho$, $\mathfrak{b}_j$, $\mathfrak{c}_j$ and $(\bar g_k)_j$.

    If we further substitute $y$ by $\frac{r}{x}$ on $X_j$ and change $d\log y$ into $d \log(\frac{r}{x})$, then $(T^u_{\rho,j} \varphi \wedge \Omega)_j$ transforms into $(T^u_{\rho,i} \varphi \wedge \Omega)_i$.
    From the expression of $T^u_{\rho,i}$, we can expand $\phi_{p,l}$ into a Laurent series of $x$:
    $$
    \phi_{p,l} = \sum_{r\in \mathbb{Z}} \phi_{p,l,r} x^r,
    $$ 
    where the coefficients $\phi_{p,l,r}$'s are smooth functions on $D_{ij}$ that are constant along $x$. Then $\Res_{D_{ij}} (T^u_{\rho,i} \varphi \wedge \Omega)_i = \sum_{p,l} \Res_{D_{ij}}(\phi_{p,l}) z^p t^l$, where $\Res_{D_{ij}}(\phi_{p,l})$ equals the coefficient of $\frac{\td x}{x}$ in the above Laurent series expansion of $\phi_{p,l}$.
    By the above argument, this coefficient is the negative of the $y$-independent coefficient of $\frac{\td y}{y}$ of the Laurent expansion of $\psi_{p,l}$ in $y$.
    Therefore (\ref{residue cancel}) follows.
\end{proof}
With the trace map at hand, we define a family version of higher residue pairing: for any $\alpha,\beta \in \mathcal{H}_F$, we set
\begin{equation*}
    \KK_F(\alpha, \beta) = Tr(T_\rho^u \alpha \cdot \overline{T_\rho^u \beta})
\end{equation*}
By Proposition \ref{vanishing of trace 2}, $\KK_F$ descends to a pairing:
\begin{equation*}
    \KK_F: \mathcal{H}_F \times \mathcal{H}_F \longrightarrow \hat \RT [[u]].
\end{equation*}

As $[Q_F,E_F] = 0$, there exists a weight grading on $\mathcal{H}_F$. It turns out that $\KK_F$ is homogeneous with respect to this $\Z$-grading.
\begin{lemma}
The higher residue pairing $\KK_F$ is homogeneous of weight degree $-2n$.
\end{lemma}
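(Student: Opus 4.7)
The plan is to track the $E_F$-weight of every ingredient appearing in $\KK_F(\alpha, \beta) = \Tr(T_\rho^u\alpha\cdot\overline{T_\rho^u\beta})$ and show the shifts combine to $-2n$. First I would verify that $T_\rho^u = \rho + Q(\rho) V_F(1+[Q,V_F])^{-1}$ has weight $0$, so that it commutes with $E_F$. The cutoff $\rho(|z_1|^2,\ldots,|z_d|^2)$ has weight $0$; $Q = \bar\partial + u\partial$ is weight-$0$ by the same computation that gave $[E_f, Q_f] = 0$ earlier, since $\deg(u) = 2$; and $V_F = (\sum_k h_k \bar g_k)^{-1}\sum_k\bar g_k\tilde{\theta}_k$ is weight-$0$, as $\deg(h_k) = 2$ (read off from $\td F = \sum_k h_k \frac{\td z_k}{z_k}$ together with $\deg(\td F) = 2$ and $\deg(\frac{\td z_k}{z_k})=0$), $\deg(\bar g_k) = -2$, and $\deg(\tilde{\theta}_k) = 2$. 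Since moreover $\deg(-u) = \deg(u)$, the involution $\beta(u) \mapsto \beta(-u)$ preserves weight, so for homogeneous $\alpha, \beta$ the product $T_\rho^u\alpha\cdot\overline{T_\rho^u\beta}$ has weight $\deg(\alpha) + \deg(\beta)$.

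It then suffices to show the family trace $\Tr: \mathcal{H}_F \to \hat\RT[[u]]$ has weight $-2n$. For homogeneous $\gamma$ of weight $w$, I would expand $T_\rho^u\gamma|_{X_i}$ in monomials $m$ in $z^p, t, u$ as $\sum_m P_{i,m}\, m$, where each $P_{i,m}$ is a polyvector field on $X_i$ in the $(z,\bar z)$-variables alone, of polyvector weight $w - \deg(m)$. Only the bidegree-$(n,n)$ summands contribute to $\Tr$, and on such a polyvector the operation $(\cdot\vdash\Omega)\wedge\Omega$ converts the $+2n$ contribution of $\tilde{\theta}_1\cdots\tilde{\theta}_n$ into a weight-$0$ form factor (since each $\frac{\td z_i}{z_i}$ has form-weight $0$), so that $\omega_{i,m} := (P_{i,m}\vdash\Omega)\wedge\Omega$ is a smooth $(n,n)$-form on $X_i$, possibly with poles along $D_i$, of form-weight $w - 2n - \deg(m)$. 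By $\hat\RT[[u]]$-linearity of the regularized integral, $\Tr(\gamma) = \sum_{i,m}(\dashint_{X_i}\omega_{i,m})\, m$. It then remains to show that $\dashint_{X_i}\omega_{i,m} = 0$ whenever $\omega_{i,m}$ has nonzero form-weight in the $(z,\bar z)$-variables; granting this, only $\deg(m) = w - 2n$ survives and $\Tr(\gamma)$ is concentrated in weight $w - 2n$, which is exactly the assertion that $\KK_F$ has weight degree $-2n$.

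The main obstacle is the vanishing claim above. On compactly supported smooth forms it follows from Stokes' theorem applied to the Lie derivative along the real Euler-type vector field $V = \sum_i(z_i\partial_{z_i} - \bar z_i\partial_{\bar z_i})$ on each $X_i$, under which $\mathcal{L}_V\omega_{i,m} = \tfrac12(\text{form-weight of }\omega_{i,m})\cdot\omega_{i,m}$. Because our integrands can carry arbitrary orders of poles along the strata $D_i$, I would need to verify that the regularization scheme of Appendix \ref{Section of regularized integral} is $V$-equivariant, so that $\dashint_{X_i}\mathcal{L}_V(\cdot) = 0$ continues to hold with $\dashint$ in place of $\int$. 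This is the one genuinely nontrivial step; once it is recorded, everything else reduces to bookkeeping with the grading.
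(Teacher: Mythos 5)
Your proposal follows essentially the same route as the paper: the paper's proof of this lemma is literally ``similar to Lemma \ref{weight property}'', i.e.\ weight bookkeeping showing $T^u_\rho$ is weight-preserving together with the fact that the trace annihilates everything outside the top weight, which is exactly your reduction. The one step you left unverified --- that $\dashint_{X_i}\omega_{i,m} = 0$ whenever $\omega_{i,m}$ has nonzero weight in the $(z,\bar z)$-variables --- is indeed the only real content, and it does hold: the diagonal rotation $z \mapsto e^{\sqrt{-1}\theta} z$ preserves $X_i$, the polar divisor $D_i$ and pole orders, so pulling back an admissible decomposition $\omega = \alpha + \pa\beta$ yields another admissible decomposition, and the choice-independence in Theorem \ref{Regularized integral} shows the regularized integral is rotation-invariant; averaging over $\theta$ (equivalently, your assertion $\dashint_{X_i}\mathcal{L}_V(\cdot) = 0$, which follows by differentiating the invariance in $\theta$, avoiding any delicate residue analysis in the regularized Stokes formula) then kills every nonzero-weight piece. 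One small point worth recording is that for the non-polynomial coefficients such as $\rho(|z_1|^2,\dots,|z_d|^2)$ and the geometric-series expansion of $(\sum_k h_k \bar g_k)^{-1}$ on each component, ``weight'' must be read as genuine equivariance under this circle action term-by-term in the $z^p, t, u$-expansion rather than monomial degree; your setup (expanding $T^u_\rho\gamma|_{X_i}$ into $\sum_m P_{i,m}\, m$ and treating each $P_{i,m}$ separately) handles this correctly, so with the equivariance of $\dashint$ recorded the argument is complete and matches the paper's.
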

\begin{proof}
    Similar to that of Lemma \ref{weight property}.
\end{proof}

\begin{theorem} \label{properties of HRP}
Let $s_1, s_2$ be sections of $\mathcal{H}_F$. Then
\begin{enumerate}[(1)]
    \item $\KK_F(s_1,s_2) = \overline{\KK_F(s_2,s_1)}$;
    \item $\KK_F(g(u)s_1,s_2) = \KK_F(s_1,g(-u)s_2) = g(u) \KK_F(s_1,s_2)$ for any $g(u) \in \hat \RT [[u]]$;
    \item $v \KK_F(s_1,s_2) = \KK_F(\nabla_v s_1,s_2) + \KK_F(s_1, \nabla_v s_2)$ for any $v \in \Theta^1_{\hat \ST^\dag}$;
    \item $(u\frac{\td}{\td u} + n)\KK_F(s_1,s_2) = \KK_F(\nabla_{u\frac{\td}{\td u}} s_1,s_2) + \KK_F(s_1, \nabla_{u\frac{\td}{\td u}} s_2)$;
    \item the pairing $\mathcal{H}_F / u\mathcal{H}_F \times \mathcal{H}_F / u\mathcal{H}_F \longrightarrow \hat \RT$ induced by $\KK_F$ is nondegenerate.
\end{enumerate}
\end{theorem}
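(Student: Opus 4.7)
The plan is to establish the five properties in sequence, taking (1), (2), (4), and (5) as essentially formal consequences of the setup and identifying (3) as the main technical step.

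Properties (1) and (2) follow directly from the definition together with the convention $\overline{g(u)} := g(-u)$. For (2), both equalities reduce to the $\hat\RT[[u]]$-linearity of $T^u_\rho$ and $\Tr$. For (1) I would combine the graded commutativity of the polyvector product with the fact that different choices of the cutoff $\rho$ in $T^u_\rho$ produce $Q_F$-coboundaries (via the homotopy $R^u_\rho$), which are killed by $\Tr$ thanks to Proposition \ref{vanishing of trace 2}; swapping $s_1$ and $s_2$ and then substituting $u \mapsto -u$ then yields the sesquisymmetry.

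Property (3) is the core of the theorem. I would fix a horizontal lift $\tilde v$ of $v \in \Theta^1_{\hat\ST^\dag}$ to a derivation on the total space $\hat\XT^\dag$, chosen so that $\tilde v$ commutes with the trace in the sense $v \cdot \Tr(\omega) = \Tr(\tilde v\, \omega)$ for a top-degree form $\omega$; the verification of this identity uses the logarithmic Stokes formula together with Proposition \ref{vanishing of trace 2} to absorb boundary contributions. Applying Leibniz inside the trace and comparing with the Gauss-Manin formula \eqref{eqn:GM_connection_formula}, the difference between $T^u_\rho \nabla_v s_i$ and $\tilde v(T^u_\rho s_i)$ is multiplication by $\tfrac{1}{u}(\iota_v \td F)$ modulo $Q_F$-exact terms. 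In the first slot this contributes $\tfrac{1}{u}(\iota_v \td F)\,\KK_F(s_1,s_2)$; in the second slot the bar substitution flips the sign of $u$, so the two scalar factors cancel, and the $Q_F$-exact errors vanish under $\Tr$ by Proposition \ref{vanishing of trace 2}.

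Property (4) is proved by the same mechanism applied to $u\,\pa/\pa u$. Using $\nabla_{u\pa_u} s_i = (u\pa_u + p_i - F/u) s_i$, the $F/u$ corrections cancel across the two slots by the bar flip as in (3), and the combined contribution of the $p_i$ terms equals $n$ in the only non-vanishing bidegree, where $p_1 + p_2 = n$ is forced for the trace integrand to reach top bidegree on $X$. Property (5) follows from Theorem \ref{freeness of Hodge bundle}, which lets me represent every class in the basis $\{\varphi_i\}$ inherited from the good basis of Theorem \ref{good basis}; modulo $u$ and modulo the maximal ideal $\mathcal{I}$ the pairing reduces to $\KK_f^{(0)}$, which is nondegenerate by Proposition \ref{nondegeneracy}, so a Nakayama argument on the determinant of the Gram matrix in this basis lifts nondegeneracy to $\hat\RT$. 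The main obstacle is (3): making precise the horizontal lift $\tilde v$ in the presence of the regularized integrals of Appendix \ref{Section of regularized integral} and tracking the $Q_F$-exact errors produced when commuting $\tilde v$ past $T^u_\rho$ and past the homotopy operator $V_F$; once that bookkeeping is done, the cancellation of the $u^{-1}$ contributions in (3) and (4) becomes automatic.
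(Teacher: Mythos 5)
Your proposal is correct and takes essentially the same route as the paper, whose own proof is only a sketch: (1)--(2) are declared trivial, (5) is reduced to Proposition \ref{nondegeneracy} (your Gram-matrix/Nakayama argument over the complete local ring $\hat\RT$ is the intended way to make that reduction precise), and (3)--(4) are left as ``direct calculations similar to \cite[Proposition 3.20]{LLS}'' --- precisely the differentiate-under-the-trace Leibniz argument with the $u\mapsto -u$ cancellation of the $\frac{1}{u}\iota_v \td F$ (resp. $F/u$) terms and the bidegree count $p_1+p_2=n$ that you describe. One small notational caveat: $\iota_v \td F$ is a function on $\hat\XT$, not on the base, so it does not factor out of $\Tr$ as a coefficient of $\KK_F(s_1,s_2)$; the cancellation between the two slots takes place at the level of integrands inside the (regularized) trace, which is what your argument in fact uses.
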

\begin{proof}
    Statements (1) and (2) are trivial. Statement (5) follows from Proposition \ref{nondegeneracy}. Statements (3) and (4) follow from direct calculations, similar to that of \cite[Proposition 3.20]{LLS}.
\end{proof}
By Theorems \ref{freeness of Hodge bundle} and \ref{properties of HRP}, the triple $(\mathcal{H}_F, \nabla, \KK_F)$ gives a \emph{log semi-infinite variation of Hodge structure} (abbrev. $\frac{\infty}{2}$-LVHS).

\subsection{Logarithmic Frobenius manifold}\label{log Frobenius}

In this section, we show that there is a logarithmic Frobenius manifold structure on $\ST$. By \cite{R} and \cite{KLM}, it suffices to show the existence of a good opposite filtration and a primitive form.
Our exposition is along the lines of \cite{LLS, LLSS}; some details will be omitted and we refer the reader to those references.

We start with the construction of a \emph{good opposite filtration} \cite{LLS}.
Due to the nontriviality of $\nabla_v^{(0)}$, there exist no global flat sections of 
$\mathcal{H}_{F,\pm} = H^*(\Theta_{\hat \XT^\dag/\hat \ST^\dag}^\bullet [[u]][u^{-1}], \{F,-\} + u \partial)$. However, there is the following Deligne extension of sections in $\mathcal{H}_{f,\pm} := H(\PV_{\log}(X)[[u]][u^{-1}], \{f,-\} + u \partial)$.

\begin{lemma} \label{Deligne extension}
There is a degree-preserving $\C((u))$-linear map $DE: \mathcal{H}_{f,\pm} \to \mathcal{H}_{F,\pm}$ such that
\begin{equation*}
    DE(\varphi) \equiv \, \varphi \, (\text{mod } \mathcal{I}), \quad \nabla_v \circ DE = DE \circ \nabla_v^{(0)}
\end{equation*}
for any $ v \in \Theta_{\hat \ST^\dag}^1$.
\end{lemma}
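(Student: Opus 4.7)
The strategy is to construct $DE$ as the formal Deligne extension associated with the Gauss--Manin connection $\nabla$, whose residues $\nabla_v^{(0)}$ are nilpotent by Lemma \ref{residue of Gauss-Manin}. I would first define $DE$ on the good basis $\{\varphi_i\}_{i=1}^\mu$ of $\mathcal{H}_{f,\pm}$ and then extend $\C((u))$-linearly.

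Using Theorem \ref{freeness of Hodge bundle}, lift $\{\varphi_i\}$ to an $\hat\RT[[u]]$-basis $\Phi = (\tilde\varphi_1, \ldots, \tilde\varphi_\mu)$ of $\mathcal{H}_F$. In this basis, the extended Gauss--Manin connection reads $u\nabla\Phi = \Phi\,\mathcal{A}$, where $\mathcal{A}$ is a matrix of $1$-forms on $\hat\ST^\dag$ with entries in $\hat\RT[[u]]$; Lemma \ref{residue of Gauss-Manin} gives $\mathcal{A}|_{\mathcal{I}=0} = \mathcal{N}$, a matrix of constant $\C$-valued $1$-forms whose contraction $N_v = \iota_v \mathcal{N}$ with any $v \in \Theta^1_{\hat\ST^\dag}$ is nilpotent. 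I would then seek a gauge transformation $G \in \mathrm{GL}_\mu(\hat\RT((u)))$ with $G \equiv I \pmod{\mathcal{I}}$ satisfying
\[
\mathcal{A}\,G - G\,\mathcal{N} + u\,dG = 0,
\]
and set $DE(\varphi_i) := \sum_j \tilde\varphi_j \, G^{j}{}_i$. The congruence $DE(\varphi_i) \equiv \varphi_i \pmod{\mathcal{I}}$ is built in, while contracting the above equation with $v \in \Theta^1_{\hat\ST^\dag}$ yields the intertwining $\nabla_v \circ DE = DE \circ \nabla_v^{(0)}$.

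To construct $G$, I would proceed by induction on the $\mathcal{I}$-adic filtration, writing $G = I + \sum_{k\geq 1} G_k$. The order-$k$ piece of the gauge equation, upon contracting with an arbitrary $v$, reduces to a linear equation of the form
\[
[N_v, G_k] + u\cdot \mathcal{L}_v G_k = R_k(v),
\]
where $R_k$ is determined by $G_1, \ldots, G_{k-1}$ and the higher $\mathcal{I}$-components of $\mathcal{A}$. Since each $N_v$ is nilpotent, the operator $[N_v, \,\cdot\,] + u\cdot\mathcal{L}_v$ on the relevant finite-dimensional piece of matrix-valued functions is invertible, with a Laurent-polynomial-in-$u$ inverse whose negative $u$-order is bounded by the nilpotency index of $\mathcal{N}$; this uniquely determines $G_k$. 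Homogeneity of the entire system with respect to the Euler field $E_F$ (together with $\deg u = 2$) then delivers the degree-preservation of $DE$.

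The main obstacle is the consistency of the inductive right-hand sides $R_k$: solving for a single matrix $G_k$ from the system of equations indexed by $v$ requires the family $\{R_k(v)\}_v$ to satisfy an integrability condition. This condition is precisely the flatness identity $d\mathcal{A} + \frac{1}{u}\mathcal{A} \wedge \mathcal{A} = 0$, which converts a successful solution at order $k-1$ into the required compatibility at order $k$. Making this rigorous demands careful bookkeeping with the $\mathcal{I}$-adic filtration, since $d$ preserves $\mathcal{I}$-order along the log directions of $\hat\ST^\dag$ but lowers it by one along the smooth directions $\partial/\partial t_i$. Once consistency is established, the perturbative procedure produces the required $G \in \mathrm{GL}_\mu(\hat\RT((u)))$, completing the definition of $DE$.
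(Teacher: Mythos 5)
Your proposal follows essentially the same route as the paper: the paper also views the good basis $\varphi_i$ as a frame of $\mathcal{H}_F$, writes $DE(\varphi_i)$ via a gauge matrix $A$ (your $G$) satisfying $u\nabla A = AN - N_p A$ with $A \equiv \mathrm{Id} \ (\mathrm{mod}\ \mathcal{I})$, and solves it by induction on the $\mathcal{I}$-adic order, citing \cite[Lemma 6.13]{KLM} for the inductive step that your eigenvalue decomposition, nilpotency of $N_v$, and flatness/integrability argument reconstructs. So the proposal is correct and matches the paper's proof in substance, with your sketch supplying the details the paper delegates to the reference.
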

\begin{proof}
Since $\varphi_1,\cdots,\varphi_\mu$ can also be viewed as a basis of sections of $\mathcal{H}_F$, there exists an $\hat \RT((u))$-valued matrix $A$ such that
\begin{equation} \label{extension matrix}
    (DE (\varphi_1),\cdots,DE (\varphi_\mu))= (\varphi_1,\cdots,\varphi_\mu) A.
\end{equation}
Applying $u\nabla$ on both sides and let $N_p$ and $N$ be the matrix of $\nabla$ and $\nabla^{(0)}$ respectively, we have
\begin{align*}
    (\varphi_1,\cdots,\varphi_\mu) N_p A + (\varphi_1,\cdots,\varphi_\mu) u \nabla A = (\varphi_1,\cdots,\varphi_\mu) AN
\end{align*}
and hence $u\nabla A = AN-N_p A$.
We solve for $A$ by induction on $k$ applying to the equation
\begin{equation} \label{equation of Deligne extension}
    u\nabla A \equiv AN-N_p A \, (\text{mod } \mathcal{I}^{k+1}).
\end{equation}
The $k=0$ case is trivial.
The induction part is the same as that of \cite[Lemma 6.13]{KLM}.
\end{proof}

\begin{lemma} \label{HRP on extension of good basis}
Let $DE (\varphi_1),\cdots,DE (\varphi_\mu)$ be the Deligne extension of the good basis in Theorem \ref{good basis}. Then
\begin{equation*}
    \KK_F(DE(\varphi_i), DE(\varphi_j)) = \KK_f(\varphi_i,\varphi_j) \in \C.
\end{equation*}
\end{lemma}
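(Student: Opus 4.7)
Write $H_{ij} := \KK_F(DE(\varphi_i), DE(\varphi_j))$ and $C_{ij} := \KK_f(\varphi_i, \varphi_j)$, the latter lying in $\C$ by Theorem~\ref{good basis}. The plan is to exhibit $H$ as the unique solution of a first-order linear system on $\hat{\ST}^{\dag}$ with initial value $C$ at the central fiber, and then to verify that the constant matrix $C$ itself solves this system. The initial condition $H|_{\mathcal{I}=0} = C$ is immediate from $DE(\varphi_k) \equiv \varphi_k \pmod{\mathcal{I}}$ of Lemma~\ref{Deligne extension} together with the compatibility $\KK_F|_{\mathcal{I}=0} = \KK_f$.

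To derive the differential system, fix $v \in \Theta^1_{\hat{\ST}^{\dag}}$. Combining property~(3) of Theorem~\ref{properties of HRP}, the defining flatness $\nabla_v \circ DE = DE \circ \nabla_v^{(0)}$, the residue formula $u\nabla_v^{(0)} \varphi_i = \sum_l (N_v)_{li} \varphi_l$ from Lemma~\ref{residue of Gauss-Manin}, and the $\C((u))$-linear extension of property~(2) (which produces the sign $u \mapsto -u$ when pulling $1/u$ out of the second slot of $\KK_F$), I obtain
\[
v H_{ij} \;=\; \frac{1}{u}\bigl(N_v^T H - H N_v\bigr)_{ij}.
\]

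The crux of the argument is the algebraic identity $N_v^T C = C N_v$ for every $v \in \Theta^1_{\hat{\ST}^{\dag}}$. I establish it by passing to $\mathcal{H}_f / u\mathcal{H}_f \cong H^*(Y, \C)$ via Theorem~\ref{state space isomorphism}: by the calculation in the proof of Lemma~\ref{residue of Gauss-Manin}, the operator $u\nabla_v^{(0)}$ acts on this ring as multiplication by the element $\alpha_v := \sum_{l=n+1}^{d} \frac{\td z^{p_l}}{z^{p_l}}(v)\, z_l$, so that $(N_v)_{li}$ records the structure constant of $\alpha_v \cdot \varphi_i$ in the good basis, while Proposition~\ref{nondegeneracy} identifies $C$ with the Poincar\'e pairing of $Y$ up to a nonzero scalar. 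The multiplication-invariance $\langle \alpha_v \cdot \varphi_i, \varphi_j \rangle = \langle \varphi_i, \alpha_v \cdot \varphi_j \rangle$ of the Poincar\'e pairing, expanded in the good basis, gives $N_v^T C = C N_v$. Granted this, the constant matrix $C$ is already a solution of the ODE with the correct initial value at $\mathcal{I} = 0$, and a standard $\mathcal{I}$-adic induction---completely analogous to the construction of the Deligne extension in Lemma~\ref{Deligne extension}---forces $H = C$. The genuine obstacle is the identity $N_v^T C = C N_v$: it is here that the explicit ring-theoretic form of the Gauss-Manin residue on the good basis meets Poincar\'e duality on $Y$, and is precisely where the perturbative framework around the central degenerate fiber, with its combinatorial good basis indexed by the cones of $\Sigma$, really pays off.
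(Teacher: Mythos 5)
Your proposal is correct, and its skeleton---the first-order system $u\,v(H) = N_v^{T}H - HN_v$ obtained from Theorem~\ref{properties of HRP}(2)--(3), the flatness $\nabla_v\circ DE = DE\circ\nabla_v^{(0)}$ of Lemma~\ref{Deligne extension} and the residue formula of Lemma~\ref{residue of Gauss-Manin}, together with the initial condition $H\equiv C\ (\text{mod }\mathcal{I})$---is exactly the argument the paper compresses into one sentence (citing (3), the Deligne extension, $(\nabla^{(0)})^2=0$ and nilpotency). Where you go beyond the paper's stated ingredients is the identity $N_v^{T}C = CN_v$, which you prove by recognizing $u\nabla_v^{(0)}$ as multiplication by a ring element and $C$ as the Poincar\'e pairing via Proposition~\ref{nondegeneracy}. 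This step is genuinely needed: along the toric directions $v\in\Theta^1_{\hat S^\dag}$ it is forced by the system itself at order zero (nilpotency of $N_v$ over $\C((u))$), but along the unfolding directions $\partial_{t_i}$ nilpotency and flatness alone do not exclude a contribution of the shape $\tfrac{t_i}{u}\bigl(N_{\partial_{t_i}}^{T}C - CN_{\partial_{t_i}}\bigr)$, so the self-adjointness of multiplication with respect to the leading pairing (the Frobenius property of $\KK_f^{(0)}$ as a trace of a product) is precisely the missing input; in this respect your write-up is more complete than the paper's. Two small remarks: (i) for $v=\partial_{t_i}$ the residue is multiplication by $\varphi_i$ (since $\iota_{\partial_{t_i}}\td F=\varphi_i$), not by the element $\alpha_v=\sum_{l}\frac{\td z^{p_l}}{z^{p_l}}(v)\,z_l$ taken from the proof of Lemma~\ref{residue of Gauss-Manin}, which only covers the $\hat S$-directions; your self-adjointness argument is unaffected, but the formula should be adjusted accordingly (note also that for $\varphi_1=1$ the residue is the identity, which commutes with everything, so the failure of nilpotency there is harmless). (ii) The uniqueness step you leave as a ``standard $\mathcal{I}$-adic induction'' does go through: at each order the $\partial_{t_i}$-equations kill the $t$-dependence of the leading graded piece because the matrices $N_v$ are constant, and the log directions kill the $z^{p}$-dependence because the operator $N_v^{T}(\cdot)-(\cdot)N_v$ is nilpotent over the field $\C((u))$ while the eigenvalue $u\langle n,p\rangle$ is invertible for a suitable $n$.
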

\begin{proof}
    This follows from (3) of Theorem \ref{properties of HRP}, Lemma \ref{Deligne extension}, $(\nabla^{(0)})^2 = 0$ and Lemma \ref{residue of Gauss-Manin} which says that $\nabla^{(0)}_v$ is nilpotent for any $v \in \Theta^1_{\hat \ST^\dag}$.
\end{proof}
\begin{remark}
    The sections $DE(\varphi), \varphi \in \mathcal{H}_{f,\pm}$ play the role of flat sections in the usual Landau-Ginzburg model.
\end{remark}

\begin{proposition}\label{good opposition}
Let $\mathcal{H}_{F,-}$ be the $\hat \RT[u^{-1}]$-submodule of $\mathcal{H}_{F,\pm}$ generated by $u^{-1}DE(\varphi_1)$, $\cdots$, $u^{-1}DE(\varphi_\mu)$. Then $\mathcal{H}_{F,-}$ defines a good opposite filtration, in other words, we have
\begin{enumerate}[(a)]
    \item $\mathcal{H}_{F,\pm} = \mathcal{H}_F \oplus \mathcal{H}_{F,-}$;
    \item $\mathcal{H}_{F,-}$ is preserved by $\nabla_v$ for any $v \in \Theta^1_{\hat \ST^\dag}$;
    \item $\mathcal{H}_{F,-}$ is preserved by $E_F$;
    \item $\mathcal{H}_{F,-}$ is isotropic with respect to the symplectic pairing $\Res_{u=0}\KK_F(-,-)$.
\end{enumerate}
\end{proposition}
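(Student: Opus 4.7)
The plan is to reduce each of the four properties to the previously established lemmas: Lemma \ref{Deligne extension} (construction of $DE$), Lemma \ref{residue of Gauss-Manin} (nilpotency of $u\nabla^{(0)}_v$), and Lemma \ref{HRP on extension of good basis} (constancy of $\KK_F$ on Deligne-extended generators). Most of the conceptual work has been done by these lemmas, so what remains is organized book-keeping with the $u$-adic filtration.

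For (a), I would first observe that $\{DE(\varphi_i)\}$ forms a $\hat\RT((u))$-basis of $\mathcal{H}_{F,\pm}$: the change-of-basis matrix $A$ from \eqref{extension matrix} reduces to the identity modulo $\mathcal{I}$ and is therefore invertible in $\hat\RT((u))^{\mu\times\mu}$. Then an $\mathcal{I}$-adic induction would show that the canonical decomposition $\hat\RT((u))=\hat\RT[[u]]\oplus u^{-1}\hat\RT[u^{-1}]$ is compatible with the filtrations $\mathcal{H}_F$ and $\mathcal{H}_{F,-}$ inside $\mathcal{H}_{F,\pm}$; the base case reduces exactly to the clear splitting at the central fiber. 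For (b), combining $\nabla_v\circ DE=DE\circ\nabla_v^{(0)}$ with $u\nabla_v^{(0)}\varphi_j=\sum_i\varphi_i N_{v,ij}$ yields
$$
\nabla_v DE(\varphi_j)=u^{-1}\sum_i DE(\varphi_i)N_{v,ij},
$$
so $\nabla_v(u^{-k-1}DE(\varphi_j))=u^{-k-2}\sum_i DE(\varphi_i)N_{v,ij}\in\mathcal{H}_{F,-}$; the Leibniz rule (using that $\nabla_v$ is a derivation on the $\hat\RT$-coefficients) then handles general elements. For (c), since $DE$ is degree-preserving each $DE(\varphi_i)$ is $E_F$-homogeneous of weight $\deg(\varphi_i)$, hence $u^{-k-1}DE(\varphi_i)$ is an $E_F$-eigenvector of weight $\deg(\varphi_i)-2(k+1)$, and invariance of $\mathcal{H}_{F,-}$ propagates to general elements by the derivation property of $E_F$ on $\hat\RT((u))$.

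For (d), I would apply the sesquilinearity from Theorem \ref{properties of HRP}(2), extended to $\hat\RT((u))$-coefficients through the identification $\mathcal{H}_{F,\pm}=\mathcal{H}_F\otimes_{\hat\RT[[u]]}\hat\RT((u))$, together with Lemma \ref{HRP on extension of good basis}, to obtain
$$
\KK_F\bigl(u^{-j-1}DE(\varphi_i),\,u^{-k-1}DE(\varphi_l)\bigr)=(-1)^{k+1}u^{-j-k-2}\KK_f(\varphi_i,\varphi_l).
$$
Since $-j-k-2\leq -2$ for $j,k\geq 0$, the coefficient of $u^{-1}$ vanishes, so $\Res_{u=0}\KK_F$ is zero on generators and extends to zero on all of $\mathcal{H}_{F,-}$ by $\hat\RT$-bilinearity. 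The main obstacle I anticipate is (a), specifically verifying that the $\mathcal{I}$-adic expansion of $A$ interacts cleanly with the positive/negative splitting of $\hat\RT((u))$ so that $\mathcal{H}_F$ and $\mathcal{H}_{F,-}$ truly span complementary subspaces rather than overlap; the other three properties fall out almost mechanically once this decomposition is in place.
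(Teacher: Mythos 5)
Your proposal is correct and follows essentially the same route as the paper: the paper's (much terser) proof likewise deduces (a) from the central-fiber splitting $\mathcal{H}_{f,\pm}=\mathcal{H}_f\oplus\mathcal{H}_{f,-}$ together with $DE(\varphi_i)\equiv\varphi_i\ (\mathrm{mod}\ \mathcal{I})$, (b) from $\nabla_v\circ DE=DE\circ\nabla_v^{(0)}$ and Lemma \ref{residue of Gauss-Manin}, (c) from the homogeneity of the $DE(\varphi_i)$ in Lemma \ref{Deligne extension}, and (d) from Lemma \ref{HRP on extension of good basis}. Your extra detail (the invertibility of $A$, the explicit $u$-power bookkeeping in (b) and (d)) just spells out what the paper leaves implicit.
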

\begin{proof}
    Let $\mathcal{H}_{f,-}$ be the $\C[u^{-1}]$-module generated by $u^{-1}\varphi_1,\cdots,u^{-1}\varphi_\mu$. Then $\mathcal{H}_{f,\pm} = \mathcal{H}_f \oplus \mathcal{H}_{f,-}$ by Theorem \ref{Hodge-to-de Rham}. Thus 
    (a) follows from $DE(\varphi_i) \equiv \, \varphi_i \ (\text{mod } \mathcal{I})$.
    (b) follows from $\nabla_v \circ DE = DE \circ \nabla_v^{(0)}$ and Lemma \ref{residue of Gauss-Manin}.
    (c) follows from Lemma \ref{Deligne extension} that $u^{-1}DE(\varphi_1),\cdots,u^{-1}DE(\varphi_\mu)$ are all homogeneous with respect to the $\Z$-grading induced by $E_F$.
    (d) follows from Lemma \ref{HRP on extension of good basis}.
\end{proof}

To construct a \emph{primitive form}, we consider a \emph{Birkhoff factorization} of the $\hat \RT((u))$-valued matrix $A$ in the proof of Lemma \ref{Deligne extension}. Namely, we solve the equation $A \equiv CB^{-1} \ (\text{mod } \mathcal{I}^{k+1})$ by induction on $k$, so that $C$ is an $\hat \RT[[u]]$-valued matrix and $B$ is of the form $\text{Id}_{\mu\times \mu} + B'$, in which $B'$ is an $u^{-1}\hat \RT[u^{-1}]$-valued matrix (cf. \cite[Sections 5.4 and 6]{LLS}).
Then we have
\begin{equation} \label{spliting}
    (DE (\varphi_1),\cdots,DE (\varphi_\mu)) B = (\varphi_1,\cdots,\varphi_\mu) C.
\end{equation}
\begin{lemma} \label{lemma_primitive_form}
Denote $(\tilde\varphi_1, \cdots, \tilde\varphi_\mu) :=  (DE (\varphi_1),\cdots,DE (\varphi_\mu)) B$. Then $\KK_F(\tilde\varphi_i,\tilde\varphi_j) \in \hat \RT$.
\end{lemma}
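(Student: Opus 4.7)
The plan is to compute $\KK_F(\tilde\varphi_i, \tilde\varphi_j)$ using the two distinct expressions for $\tilde\varphi_i$ afforded by \eqref{spliting}, and then intersect the resulting containments.

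Starting from the left-hand side of \eqref{spliting}, write $\tilde\varphi_i = \sum_k DE(\varphi_k) B_{ki}(u)$. Applying the sesquilinearity property (2) of Theorem \ref{properties of HRP} in both slots (so that the scalar from the second slot comes out as $B_{lj}(-u)$), and then invoking Lemma \ref{HRP on extension of good basis}, I obtain
\begin{equation*}
\KK_F(\tilde\varphi_i, \tilde\varphi_j) = \sum_{k,l} B_{ki}(u)\, B_{lj}(-u)\, \KK_f(\varphi_k, \varphi_l).
\end{equation*}
Each $\KK_f(\varphi_k, \varphi_l) \in \C$ by Theorem \ref{good basis}, and every entry $B_{ki}(\pm u)$ lies in $\hat\RT + u^{-1}\hat\RT[u^{-1}]$ since $B = \mathrm{Id}_{\mu\times\mu} + B'$ with $B'$ valued in $u^{-1}\hat\RT[u^{-1}]$. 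This subspace is stable under multiplication and under $u \mapsto -u$, so the right-hand side above lies in $\hat\RT + u^{-1}\hat\RT[u^{-1}]$.

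On the other hand, from the right-hand side of \eqref{spliting}, $\tilde\varphi_i = \sum_l \varphi_l\, C_{li}(u)$ with $C_{li}(u) \in \hat\RT[[u]]$. Since each $\varphi_l$ lies in $\mathcal{H}_F$ and $\mathcal{H}_F$ is a $\hat\RT[[u]]$-module, we have $\tilde\varphi_i \in \mathcal{H}_F$. The target of the pairing $\KK_F : \mathcal{H}_F \times \mathcal{H}_F \to \hat\RT[[u]]$ constructed in Section \ref{subsec:GM_connection_and_higher_residue} then forces $\KK_F(\tilde\varphi_i, \tilde\varphi_j) \in \hat\RT[[u]]$.

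Intersecting the two containments yields
\begin{equation*}
\KK_F(\tilde\varphi_i, \tilde\varphi_j) \in \hat\RT[[u]] \cap \bigl(\hat\RT + u^{-1}\hat\RT[u^{-1}]\bigr) = \hat\RT,
\end{equation*}
as required. The argument is purely formal and exploits the Birkhoff-type factorization $A = CB^{-1}$: the $B$-side carries only non-positive powers of $u$ and couples to the constant $\C$-valued pairing on $DE$-sections, while the $C$-side carries only non-negative powers of $u$ and places $\tilde\varphi_i$ in $\mathcal{H}_F$. There is no serious obstacle; the only point that needs minor care is tracking the sign $u \mapsto -u$ in the sesquilinearity of $\KK_F$.
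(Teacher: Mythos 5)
Your proposal is correct and follows essentially the same route as the paper: the $C$-side of the factorization \eqref{spliting} places $\KK_F(\tilde\varphi_i,\tilde\varphi_j)$ in $\hat\RT[[u]]$, while the $B$-side together with Lemma \ref{HRP on extension of good basis} places it in $\hat\RT[u^{-1}]$, and intersecting gives $\hat\RT$. You merely spell out the sesquilinearity bookkeeping (the $u\mapsto -u$ in the second slot) that the paper leaves implicit.
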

\begin{proof}
    Since $C$ is $\hat \RT[[u]]$-valued and $\KK_F(\varphi_i,\varphi_j) \in \hat \RT[[u]]$, we see that $\KK_F(\tilde\varphi_i,\tilde\varphi_j) \in \hat \RT[[u]]$. On the other hand, by Lemma \ref{HRP on extension of good basis} and the form of $B$, we have $\KK_F(\tilde\varphi_i,\tilde\varphi_j) \in \hat \RT[u^{-1}]$. The result follows.
\end{proof}

\begin{proposition}\label{prop:primitive_form}
    Let $\zeta := \tilde\varphi_1$. Then $\zeta$ is a primitive form in the sense of \cite{LLS, KLM}. More concretely,
    \begin{enumerate}[(a)]
        \item $\zeta \in \mathcal{H}_F \cap u\mathcal{H}_{F,-}$;
        \item $\nabla_v \zeta = 0 \in u\mathcal{H}_{F,-}/\mathcal{H}_{F,-}$ for any $v \in \Theta^1_{\hat \ST^\dag}$;
        \item $\zeta$ is homogeneous with respect to the $\Z$-grading induced by $E_F$;
        \item The Kodaira-Spencer map $KS: \Theta^1_{\hat \ST^\dag} \to \mathcal{H}_F/u\mathcal{H}_F$ given by $KS(v) := \nabla_v \zeta \ (\text{mod } u\mathcal{H}_F)$ is a bundle isomorphism.
    \end{enumerate}
\end{proposition}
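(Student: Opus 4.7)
The plan is to verify (a)--(d) in order, leveraging \eqref{spliting}, the shape $B = \text{Id} + B'$ with $B'$ valued in $u^{-1}\hat\RT[u^{-1}]$, together with Lemmas \ref{Deligne extension} and \ref{residue of Gauss-Manin}. Part (a) is immediate from reading \eqref{spliting} in two ways. The right-hand side $\zeta = \sum_i \varphi_i C_{i1}$ with $C_{i1} \in \hat\RT[[u]]$ gives $\zeta \in \mathcal{H}_F$; the left-hand side $\zeta = \sum_i DE(\varphi_i) B_{i1}$ with $B_{i1} \in \hat\RT[u^{-1}]$ gives $u^{-1}\zeta = \sum_i B_{i1}\cdot u^{-1}DE(\varphi_i) \in \mathcal{H}_{F,-}$, so $\zeta \in u\mathcal{H}_{F,-}$.

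For (b), I would write $\zeta = u\eta$ with $\eta = \sum_i B_{i1}\cdot u^{-1}DE(\varphi_i) \in \mathcal{H}_{F,-}$. Lemmas \ref{Deligne extension} and \ref{residue of Gauss-Manin} give $\nabla_v(u^{-1}DE(\varphi_i)) = u^{-2}\sum_j (N_v)_{ji}DE(\varphi_j)$, and the Leibniz rule then yields
\begin{equation*}
\nabla_v\zeta \;=\; u\nabla_v\eta \;=\; \sum_i v(B_{i1})\,DE(\varphi_i) \;+\; \sum_{i,j} B_{i1}(N_v)_{ji}\cdot u^{-1}DE(\varphi_j).
\end{equation*}
The second sum manifestly lies in $\mathcal{H}_{F,-}$. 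For the first, since $B_{11}-1$ and each $B_{i1}$ with $i\neq 1$ lie in $u^{-1}\hat\RT[u^{-1}]$, the base derivation $v$ kills any $u^0$-contribution, giving $v(B_{i1}) \in u^{-1}\hat\RT[u^{-1}]$, and therefore $v(B_{i1})\,DE(\varphi_i) \in \mathcal{H}_{F,-}$. Consequently $\nabla_v\zeta \in \mathcal{H}_{F,-}$, which vanishes in $u\mathcal{H}_{F,-}/\mathcal{H}_{F,-}$.

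For (c), the construction respects the $\Z$-grading throughout: $\varphi_1 = 1$ has weight zero, Lemma \ref{Deligne extension} gives a degree-preserving Deligne extension so that $A$ in \eqref{extension matrix} is homogeneous, and by uniqueness of the Birkhoff factorization $A = CB^{-1}$ both factors must also be homogeneous; hence $\zeta = \tilde\varphi_1$ has weight zero. For (d), Nakayama's lemma reduces the claim to showing $KS$ is bijective modulo $\mathcal{I}$, both sides being free $\hat\RT$-modules of rank $\mu$ by the construction of the universal unfolding. At the origin $\zeta|_0 = \varphi_1 = 1$, and formula \eqref{eqn:GM_connection_formula} reduces $KS(v)|_0$ essentially to $v(F)|_0\cdot 1 = v(F)|_0$. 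Taking $v = \partial_{t_i}$ for $\deg\varphi_i \neq 2$ yields $\varphi_i$, while the smoothing directions $v = z^p\partial_{z^p}$ produce, via the relations $z_l = \prod z_i^{a_{li}}z^{p_l}$ and the Moving Lemma \ref{algebraic moving lemma}, the remaining degree-$2$ basis elements. Together these span $\mathcal{H}_f/u\mathcal{H}_f$, so $KS|_0$ is surjective and hence bijective.

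The main obstacle, I expect, is (d): whereas (a)--(c) amount to $u$-degree bookkeeping combined with uniqueness of the Birkhoff factorization, (d) requires matching the universal-unfolding tangent directions with state-space basis elements. The choice $\zeta = \tilde\varphi_1$ is crucial here because $\varphi_1 = 1$ is the multiplicative unit, so that multiplication by $v(F)|_0$ surjects onto $\mathcal{H}_f/u\mathcal{H}_f$; for $j\neq 1$, $\varphi_j$ is nilpotent in that quotient ring and the analogous KS map would fail to be surjective.
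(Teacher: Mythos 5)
Your proposal is correct and follows essentially the same route as the paper, whose own proof is only a four-line sketch citing exactly the ingredients you spell out: the two readings of equation \eqref{spliting} for (a), the shape $B=\mathrm{Id}+B'$ with $B'$ valued in $u^{-1}\hat\RT[u^{-1}]$ together with Lemmas \ref{Deligne extension} and \ref{residue of Gauss-Manin} for (b), degree bookkeeping for (c), and the explicit Kodaira--Spencer computation at the central fiber for (d). Your write-up merely supplies the details (uniqueness of the Birkhoff factorization for homogeneity, Nakayama plus the identification of the degree-$2$ directions via the Moving Lemma) that the paper leaves implicit.
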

\begin{proof}
    (a) follows from the definition of $\zeta$. (b) follows from the assumption on $B$ in the equation \eqref{spliting}. (c) is obvious. (d) follows from an easy explicit computation.
\end{proof}

By \cite[Proposition 1.11]{R}, a good opposite filtration together with a primitive form gives the germ of a \emph{logarithmic Frobenius manifold}. Thus we arrive at our main result.

\begin{theorem}\label{main theorem}
Let $(X^\dag, \phi, f)$ be the logarithmic Landau-Ginzburg model defined from a projective toric manifold. Then there is a logarithmic Frobenius manifold structure on the base space $\ST$.
\end{theorem}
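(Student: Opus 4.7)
The plan is to reduce Theorem \ref{main theorem} to applying the general criterion of Reichelt \cite[Proposition 1.11]{R} (see also \cite{KLM}), which builds the germ of a logarithmic Frobenius manifold out of any log $\frac{\infty}{2}$-variation of Hodge structure equipped with a good opposite filtration and a primitive form. Thus at this point in the paper, all the conceptual work has been done and the proof amounts to assembling four ingredients already produced in Sections \ref{sec:deformation} and \ref{log Frobenius}, and checking that they match Reichelt's axioms verbatim.

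First, I would cite Theorem \ref{freeness of Hodge bundle} and Theorem \ref{properties of HRP} to record that $(\mathcal{H}_F, \nabla, \KK_F)$ is a log $\frac{\infty}{2}$-LVHS over $\hat\ST^\dag$: the Hodge bundle is a free $\hat\RT[[u]]$-module, the extended Gauss-Manin connection is flat with a simple pole in the $u$-direction and only logarithmic poles along the toric boundary, and the higher residue pairing is sesquilinear, $\nabla$-flat, homogeneous of weight $-2n$ with respect to $E_F$, and induces a nondegenerate pairing on $\mathcal{H}_F/u\mathcal{H}_F$. Next, I would invoke Proposition \ref{good opposition}, where the Deligne extensions $u^{-1}DE(\varphi_i)$ of the explicit good basis furnished by Theorem \ref{good basis} assemble into the good opposite filtration $\mathcal{H}_{F,-}$ satisfying the four axioms: the opposition splitting, flatness under $\nabla_v$, invariance under $E_F$, and isotropy with respect to $\Res_{u=0}\KK_F$. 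Finally, I would invoke Proposition \ref{prop:primitive_form} to produce the primitive form $\zeta = \tilde\varphi_1$ via Birkhoff factorization $A = CB^{-1}$, noting that by construction $\zeta$ lies in $\mathcal{H}_F \cap u\mathcal{H}_{F,-}$, is horizontal modulo $\mathcal{H}_{F,-}$, is homogeneous for the Euler field $E_F$, and has bijective Kodaira-Spencer map.

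With these four ingredients in hand, a direct application of \cite[Proposition 1.11]{R} then yields the desired logarithmic Frobenius manifold structure on $\ST$, completing the proof. The genuinely hard work has already been discharged earlier: the existence of the explicit good basis in Theorem \ref{good basis} (driven by the Moving Lemma \ref{algebraic moving lemma}), which forces $\KK_f(\varphi_i,\varphi_j) \in \C$; the extension of the higher residue pairing to the universal unfolding via the regularized integrals of Appendix \ref{Section of regularized integral}, needed because integrands in the family acquire poles of arbitrary order; and the verification (Proposition \ref{vanishing of trace 2}) that this regularized trace still vanishes on $Q_F$-exact forms thanks to a residue cancellation along the codimension-one strata $D_{ij}$. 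The only residual obstacle at the present stage is the bookkeeping task of matching our normalizations (in particular the weight-degree conventions on $P$ and on $t_i$ from equations \eqref{degree of T}--\eqref{degree of P}) to the Euler-field compatibility required in \cite{R}; this, however, is immediate from $E_F F = 2F$ and $[E_F, Q_F] = 0$, so no new estimates are needed.
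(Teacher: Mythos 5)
Your proposal is correct and follows essentially the same route as the paper: Section \ref{log Frobenius} proves Theorem \ref{main theorem} exactly by combining the log $\frac{\infty}{2}$-LVHS of Theorems \ref{freeness of Hodge bundle} and \ref{properties of HRP} with the good opposite filtration of Proposition \ref{good opposition} and the primitive form of Proposition \ref{prop:primitive_form}, and then invoking \cite[Proposition 1.11]{R} (together with \cite{KLM}). No gaps to report.
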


\subsection{Log LG mirrors of semi-Fano toric manifolds}\label{semi-Fano}
In this subsection, we investigate the canonical coordinates and primitive forms in the case of \emph{semi-Fano} toric manifolds, meaning projective toric manifolds $X$ whose anticanonical line bundle $K_X^{-1}$ is nef.

To begin with, we arrange the good basis $\varphi_1,\dots,\varphi_{\mu}$ (in Theorem \ref{good basis}) so that $\varphi_1 = 1$ (i.e. of weight degree $0$), $\varphi_2,\dots,\varphi_\nu$ are of weight degree $2$, while the remaining $\varphi_j$'s are of higher weight degrees. Let $\mathbf{F}$ be the vector space spanned by the basis $$DE(\varphi_1),DE(\varphi_2),\dots,DE(\varphi_\nu),DE(\varphi_{\nu+1})\dots,DE(\varphi_\mu),$$
and write its coordinate functions as 
$\tau_1,\log(\tau_2),\dots,\log(\tau_\nu),\tau_{\nu+1}\dots,\tau_\mu$, which will be the flat coordinates.  

We first describe the \emph{semi-infinite period map} (or \emph{period map} for short). We take the vector space $V=P^{gp} \times_{\Z} \C$ and treat an element $p \in P^{gp} \subset V$ as $\log z^{p}$. For $v\in \Theta_{\hat S^{\dag}} \cong \hat R \otimes_{\C} V^*$, we declare its action on $\hat{R}_V = \hat R \hat \otimes_{\C} \text{Sym}^* V $ as a derivation using the natural pairing $\langle\cdot,\cdot\rangle$ between $V^*$ and $V$. Explicitly, if we write $\partial_n$ for $n \in V^*$, then we have $\partial_n (\log z^p) = \langle n,p\rangle$ and $\partial_n(z^p) = \langle n,p\rangle z^p$. Formally, we will extend the coefficient ring from $\RT$ to $\RT\hat \otimes_{\C} \text{Sym}^* V$ in order to write down the flat sections. We let $\fvol = DE(\varphi_1) + \eta$ with $\eta \in \mathcal{H}_{F,\pm} \hat \otimes_{\C} \text{Sym}^{>0} V$ such that $\nabla_v \fvol = 0$ $\forall v \in \Theta_{\hat \ST}$. 

\begin{definition}
	The \emph{semi-infinite period map} is the map
	$$\Psi:\hat \RT\hat \otimes_{\C} \text{Sym}^*(V) \rightarrow \C[\tau_1, \log \tau_2, \dots, \log \tau_\nu, \tau_{\nu + 1}, \dots, \tau_\mu]$$
    given by
	$$
	\Psi(z^p,\log z^p) = u[\zeta - \fvol] \in u\mathcal{H}_{F,-}/\mathcal{H}_{F,-} \cong \mathbf{F}\otimes_{\C} \hat \RT \hat\otimes_{\C} \text{Sym}^*(V),
	$$
	where $\zeta$ is a primitive form as in Proposition \ref{prop:primitive_form}.
	
\end{definition}


\subsubsection{The period map for log LG mirror of a semi-Fano toric manifold}

We call the variables $\log (\tau_2),\dots,\log(\tau_\nu)$ the \emph{small quantum variables}, because they correspond to a basis of $H^2(Y)$; all the other parameters are called \emph{big quantum variables}.

We will study the period map restricted to $\hat S$ for the family $(X^{\dag},F)$ mirror to a semi-Fano toric manifold (resp. Fano toric manifold) $Y$, which refers to those with all $p \in P$ having non-negative (resp. positive) weight degrees. We denote by $P_0$ the submonoid consisting of those $p$'s with weight degree $0$. 

Let us first discuss the log LG mirror of a Fano toric manifold. We write
\begin{equation}\label{eqn:expression_of_1_in_flat_basis}
1 = f(z^{p},u) DE(\varphi_1) +  \sum_{i=2}^{\nu} g_i(z^p,u) DE(\varphi_i) + \sum_{j=\nu+1}^{\mu} h_j(z^p,u) DE(\varphi_j),
\end{equation}
and expand $f = \sum_k f_k u^k$, $g_i = \sum_{k} g_{ik} u^k$ and $h_j = \sum_k h_{jk} u^k$ into Laurent series in $u$. By a weight degree argument, we see that $g_{ik} = 0$ and $h_{jk} = 0$ for $k\geq 0$. Similarly, we have $f_{k} = 0$ for $k>0$ and $f_{0} = 1$. Thus we conclude that $\zeta = 1$ is the primitive form.

By a similar weight argument, we find that $h_{j(-1)} = 0 = g_{i(-1)}$, and $f_{-1} = \sum_{\text{deg}(p) = 2} a_p z^p$ for some constants $a_p$. Modulo the ideal $\mathcal{I}_{> 2}$ generated by those $z^{p}$'s with $p$ having weight degree greater than $2$, we obtain the formula
$$
1 = (1+u^{-1}\sum_{\text{deg}(p) = 2} a_p z^p) DE(\varphi_1) + u^{-2}\sum_{i} g_{i(-2)} DE(\varphi_i).
$$
We need to show that $a_p = 0$ for every $p$. Taking a vector field $n \in \Theta_{S^{\dag}}$ and applying $\nabla_n$ to both sides, we deduce that
$$
\sum_{s=1}^d u^{-1 } z_s (\iota_{n} d\log z_s) = u^{-1} \sum_{\text{deg}(p) = 2}  a_p \nabla_n(z^p) DE(\varphi_1) + u^{-1} \nabla_{n} (DE(\varphi_1)) + u^{-2} (\cdots),
$$
and it is enough to argue that expressing $z_s$'s in terms of the frame $DE(\varphi_i)$'s does not involve components in $DE(\varphi_1)$ modulo $\mathcal{I}_{>0}$. This is achieved by showing the following Lemma \ref{lem:connection_special_form_semi_fano}, which is valid for the log LG mirror $(X^{\dag},F)$ of a semi-Fano toric manifold. 

We take the basis $\{\varphi_1, \dots, \varphi_\mu\}$ for $\mathcal{H}_{f,+}$ as in the beginning of this subsection, lift it to a frame of $\mathcal{H}_{F,+}$ using the same monomial expressions and call them $\psi_i$'s.

\begin{lemma}\label{lem:connection_special_form_semi_fano}
	Let $(X^{\dag},F)$ be the log LG mirror of a semi-Fano toric manifold $Y$. Then the connection matrix $u^{-1} A$ of $\nabla_n$, as a $\Omega_{S^{\dag}}^1[[u]]$-valued matrix and written as a block matrix with respect to the frames $\psi_1$ in the first block, $\psi_2,\dots,\psi_{\nu}$ in the second block and $\psi_{\nu+1},\dots,\psi_{\mu}$ in the third block, is of the form
	$$
	A = \begin{Bmatrix} 0 & 0 & 0 \\
		* & * & * \\
		* & * & * 
		\end{Bmatrix}
	$$
	when modulo the ideal $\mathcal{I}_{>2}$. 
\end{lemma}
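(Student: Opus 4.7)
The plan is to pin down the first row of $A$ via the explicit Gauss-Manin formula \eqref{eqn:GM_connection_formula} and then show the $\psi_1 = 1$ component is trapped in $\mathcal{I}_{>2}$ by a weight-and-relations argument. On the $X_1$-chart of $\hat{\XT}^{\dagger}$ with free fibre coordinates $z_1,\dots,z_n$, lift a base derivation $\partial_n\in\Theta^1_{\hat S^\dagger}$ horizontally and lift $\Omega$ to $\tilde\Omega=\bigwedge_k d\log z_k$. Writing each $\psi_i$ in chart coordinates as $\tilde\psi_i z^{Q_i}$ with $Q_i\in P^{gp}$, one finds $d(\psi_i\tilde\Omega)=\psi_i\,d\log z^{Q_i}\wedge\tilde\Omega$, and the formula yields
\begin{equation*}
u\nabla_{\partial_n}\psi_i\;=\;\bigl(\partial_n F+u\langle n,Q_i\rangle\bigr)\psi_i\quad\text{in }\mathcal{H}_F.
\end{equation*}
Since $Q_1=0$ and $\psi_i$ for $i\geq 2$ carries no $\psi_1$-component in the basis decomposition, the first-row entries simplify to $A_{1i}(\partial_n)=[(\partial_n F)\psi_i]_{\psi_1}$ uniformly in $i$, where $[\cdot]_{\psi_1}$ denotes the $\psi_1$-coefficient in the $\{\psi_j\}$-basis.

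Next, expand $\partial_n F=\sum_{l>n}\langle n,p_l\rangle z_l+\sum_j t_j\langle n,Q_j\rangle\varphi_j$; each summand is of weight $2$, so $A_{1i}(\partial_n)$ has weight $2+\deg\psi_i$ in $\hat\RT$. The pure weight-$2$ part (relevant for $i=1$) vanishes outright: the linear relations $g_k=z_k+\sum_l a_{kl}z_l=0$ in $\mathcal{H}_F$ are $z^p$-independent, so each $z_l$ already reduces to a $\Z$-combination of the weight-$2$ basis elements $\psi_2,\dots,\psi_\nu$, contributing no $\psi_1$-term; and the second sum is handled identically, with the only possible $\psi_1$-contribution coming from $j=1$ (but $Q_1=0$). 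For $i\geq 2$ the weight $2+\deg\psi_i\geq 4$ is not by itself in $\mathcal{I}_{>2}$ — products of weight-$2$ generators of $\hat R$ persist — so the problem reduces to the following \emph{key claim}: for any product $z_l\cdot\psi_i$ or $\psi_k\cdot\psi_i$ with $k\geq 2$, the $\psi_1$-coefficient in $\mathcal{H}_F$ lies in $\mathcal{I}_{>2}$.

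To prove the key claim, expand such a product in the basis using the family's Stanley-Reisner relations $\prod_s z_{i_{j_s}}=\prod_k z_k^{a_k}\,z^{q'}$ for each primitive collection $\{\rho_{i_{j_s}}\}_{s=1}^{q}$ with $\sum_s \rho_{i_{j_s}}=\sum_k a_k\rho_k$. The semi-Fano hypothesis $\langle -K_Y,q'\rangle\geq 0$ forces $\deg z^{q'}=2(q-\sum a_k)\geq 0$, i.e.\ $\sum a_k\leq q$. Two cases arise. If $\sum a_k=0$, the monomial factor is trivial and $z^{q'}$ has weight $2q\geq 4$, so $z^{q'}\in\mathcal{I}_{>2}$ and the contribution is in $\mathcal{I}_{>2}$. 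If $\sum a_k\geq 1$, the remaining monomial $\prod z_k^{a_k}$ has strictly positive weight at most that of the original product, and induction on weight (base case: individual $z_l$'s, handled by the linear relations above) controls its $\psi_1$-coefficient, which is only multiplied by $z^{q'}$.

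The main obstacle is the borderline semi-Fano case $\sum a_k=q$ (equivalently $\deg z^{q'}=0$), where the SR substitution does not strictly reduce weight; the prototypical example is $z_1z_3=z_2^2 z^{\sigma}$ in $\mathbb{F}_2$ with $\sigma$ the $(-2)$-curve class, producing loops of the shape $z_3^2=cz^{\sigma}z_3^2+(\text{other terms})$ that never terminate by weight alone. We resolve these loops by viewing the iterated substitution as a finite linear system on the finite-rank free $\hat R$-module $\mathcal{H}_F$; the coefficient matrix equals the identity modulo the maximal ideal of $\hat R$ and is therefore invertible over $\hat R$, yielding a unique formal solution. Tracing through the solution, every nonzero contribution to the $\psi_1$-coefficient must ultimately pick up a factor from either a linear relation (no $\psi_1$-term) or a size-$p$ primitive relation with $\sum a_k=0$ (factor in $\mathcal{I}_{>2}$), and closure of $\mathcal{I}_{>2}$ under $\hat R$-multiplication then traps the whole coefficient inside $\mathcal{I}_{>2}$. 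Bookkeeping this combinatorial loop-breaking carefully is where the bulk of the work lies.
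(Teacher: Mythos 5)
Your first half matches the paper's own route: you use the explicit Gauss--Manin formula \eqref{eqn:GM_connection_formula} to write $u\nabla_{\partial_n}\psi_i$ as $u\langle n,Q_i\rangle\psi_i$ plus constant multiples of monomials $z_s\psi_i$ (and $t_j\varphi_j\psi_i$), handle $\psi_1=1$ via the Moving-Lemma-type linear relations, and reduce everything to the claim that such products of fiber variables have $\psi_1$-coefficient in $\mathcal{I}_{>2}$. The genuine gap is in your proof of that key claim. Your induction on weight breaks down exactly at the degree-zero curve classes (as you note), and the proposed repair does not close it: the invertibility of the coefficient matrix over $\hat R$ only re-proves existence and uniqueness of the expansion (already given by Theorem \ref{freeness of Hodge bundle}); it gives no control whatsoever on the $\psi_1$-entry. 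The entire content is then shifted to the sentence ``tracing through the solution, every nonzero contribution to the $\psi_1$-coefficient must ultimately pick up a factor from either a linear relation or a primitive relation with $\sum_k a_k=0$,'' which is precisely what has to be proved and is left as an assertion; moreover, as stated it is not the right dichotomy, since in the rewriting (Stanley--Reisner substitutions interleaved with the Jacobian-type relations $g\,\tilde\theta_k(F)+u\tilde\theta_k(g)=0$) a pure-base term can also arise from a momentum-zero collapse carrying multiplicities and accumulated $z^p$-factors, not literally a single primitive-collection relation with $\sum_k a_k=0$.

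What makes the lemma a one-step statement, and what your loop-breaking machinery obscures, is that no termination analysis is needed at all: it suffices to check the claim on the relations themselves. Every monomial identity in $\C[P_{\varphi}]$ that expresses a product of fiber variables as a pure-base monomial $z^{p}$ must consume at least two fiber variables (each of weight degree $2$), and by the semi-Fano hypothesis any additional $z^{p'}$-factors have non-negative degree; hence $\deg z^{p}\geq 4$ and $z^p\in\mathcal{I}_{>2}$. Equivalently: project any representative onto its fiber-momentum-zero part (killing all monomials $z^{(m,h)}$ with $m\neq 0$); this projection sends $\psi_1\mapsto 1$, $\psi_j\mapsto 0$ for $j\geq 2$, kills the $u\tilde\theta_k(g)$ terms, and sends every term of $g\,\tilde\theta_k(F)$ either to $0$ or into $\mathcal{I}_{>2}$ by the degree count above. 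Therefore the $\psi_1$-coefficient of any class equals, modulo $\mathcal{I}_{>2}$, the pure-base part of any representative, and for $z_s\psi_i$, $t_j\varphi_j\psi_i$, etc., that part lies in $\mathcal{I}_{>2}$. This is the content of the paper's observation that, modulo $\mathcal{I}_{>2}$, every relation $\prod_{i\in I}z_i=z^{p}\prod_{j\in J}z_{i_j}$ has $J\neq\emptyset$; replacing your iterative bookkeeping by this relation-by-relation (or projection) argument closes the gap.
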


\begin{proof}
	For $\psi_1 = 1 $, we have $\nabla_n 1 = \sum_{s=1}^d u^{-1} z_s \iota_n(d\log z_s)$ and we can express each $z_s$ in terms of $\psi_2,\dots,\psi_\nu$ using relations in $\mathcal{H}_{F,\pm}$ as in the Moving Lemma \ref{algebraic moving lemma}. 
	
	For other $\psi_i = P(\tau_i) = z_{i_1}\cdots z_{i_k}$ which is identified with $P(\tau_i) \Omega \in \mathcal{H}^{F}_{+}$, the action of the connection $\nabla_n$ is given by
	$$
	\nabla_n(P(\tau_i)\Omega) = \iota_n \left( P(\tau_i) \left( u \sum_{l=1}^k d\log z_{i_l}  + \sum_{s=1}^d z_s d\log z_s \right)\wedge \Omega \right),
	$$ 
	using the description in equation \eqref{eqn:GM_connection_formula}. 
	Note that the possible relations in $\C[P_{\varphi}]$ are 
	$$
	\prod_{i \in I} z_i = z^{p} \prod_{j\in J} z_{i_j},
	$$
	where $I$ is a subset of $\{1,\dots,d\}$, $p \in P$ and $i_j \in \{1,\dots,d\}$ (which is allowed to repeat). If we further modulo $z^{p} \in \mathcal{I}_{>2}$, we observe that the only possible relations will have $J \neq \emptyset$. Therefore, the RHS cannot have components along $\psi_1 = 1$. 
\end{proof}

In the Fano case, we see that, modulo $\mathcal{I}_{>2}$, we have
$$
\mathbb{1} = DE(\varphi_1) - \sum_{i=1}^{\nu} (\sum_{l=1}^{\nu} c_{il} \log(q_l)) DE(\varphi_i) + u^{-2} (\cdots).
$$
The period map restricted to $\hat R \otimes_\C \text{Sym}^*V$ is simply given by 
\begin{align*}
\log(\tau_i) &= \sum_{il} c_{il} \log(q_l),& \text{for $i = 2,\dots,\nu$,}\\
\tau_j & = 0, &\text{for $j \neq 2,\dots,\nu$,}
\end{align*}
where $\log(q_l)$'s is a basis for $V$. 

In the semi-Fano case, we have a more general expression.
\begin{theorem}\label{small period map}
	Let $(X^{\dag},F)$ be the log LG mirror of a semi-Fano toric manifold. When restricted to the parameter space $\hat R \otimes_\C \text{Sym}^*V$, $\zeta=1$ is a primitive form and the period map takes the form 
	\begin{align*}
		\log(\tau_i) &= \sum_{il} c_{il} \log(q_l) + g_{i(-1)}(z^{p}),&  \text{for $i = 2,\dots,\nu$,}\\
		\tau_j & = 0, &\text{for $j \neq 2,\dots,\nu$,}
	\end{align*}
    where each $g_{i(-1)}(z^{p})$ is a formal series of $z^{p}$ where $p$'s have weight degree $0$. 
\end{theorem}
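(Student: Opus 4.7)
The plan is to extend the Fano-case argument appearing just before the theorem statement, carefully tracking the extra weight-zero monomial contributions that the semi-Fano hypothesis permits.

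\textbf{Step 1: $\zeta = 1$ remains a primitive form.} Since $\deg(z^p) \geq 0$ for all $p \in P$ under the semi-Fano assumption, the weight-degree analysis from the Fano discussion applies to show that in the expansion \eqref{eqn:expression_of_1_in_flat_basis}, we have $g_{ik} = h_{jk} = 0$ for $k \geq 0$ and $f_k = 0$ for $k > 0$. What is new is that $f_0(z^p)$ and $g_{i(-1)}(z^p)$ may now be nontrivial formal series in $z^p$'s of weight degree $0$ (with $f_0$ having constant term $1$). This verifies $1 \in u\mathcal{H}_{F,-} \cap \mathcal{H}_F$, i.e., condition (a) of Proposition \ref{prop:primitive_form}; conditions (b)--(d) follow from Lemma \ref{lem:connection_special_form_semi_fano}, the Moving Lemma \ref{algebraic moving lemma}, and Proposition \ref{nondegeneracy}, reasoning as in the Fano case.

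\textbf{Step 2: Compute the flat section $\fvol$.} Solving $\nabla_v \fvol = 0$ with $\fvol - DE(\varphi_1) \in \mathcal{H}_{F,\pm} \hat\otimes \mathrm{Sym}^{>0} V$, we invoke Lemma \ref{Deligne extension} (so $\nabla_v DE(\varphi_i) = u^{-1} DE(N_v \varphi_i)$) together with the nilpotent residue matrices $N_v$ from Lemma \ref{residue of Gauss-Manin}. The exponential solution gives
\[
\fvol = DE(\varphi_1) - u^{-1}\sum_l \log(q_l)\sum_{i=2}^{\nu} c_{il}\, DE(\varphi_i) + O(u^{-2}) + (\text{corrections in } \mathrm{Sym}^{\geq 2} V),
\]
where the constants $c_{il}$ encode the action of $N_{\log(q_l)}$ on $\varphi_1 = 1$ expressed in the basis $\varphi_2,\dots,\varphi_\nu$; by weight grading only the weight-$2$ basis elements appear, exactly as in the Fano case.

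\textbf{Step 3: Read off the period map.} Combining Steps 1 and 2,
\[
\zeta - \fvol = u^{-1}\sum_{i=2}^{\nu}\left(\sum_l c_{il}\log(q_l) + g_{i(-1)}(z^p)\right) DE(\varphi_i) + O(u^{-2}) + (\text{corrections in } \mathrm{Sym}^{\geq 2} V).
\]
Multiplying by $u$ and projecting to $u\mathcal{H}_{F,-}/\mathcal{H}_{F,-} \cong \mathbf{F} \hat\otimes \hat R \hat\otimes \mathrm{Sym}^* V$ (which extracts the $u^0$ coefficient along the Deligne frame) then yields
\[
\log(\tau_i) = \sum_l c_{il}\log(q_l) + g_{i(-1)}(z^p) \quad (2 \leq i \leq \nu), \qquad \tau_j = 0 \text{ for } j = 1 \text{ or } j > \nu,
\]
exactly as claimed.

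\textbf{Main obstacle.} The principal technical point is to verify that the weight-zero series $f_0(z^p) - 1$ in the expansion of $\zeta$ (a genuinely new feature absent in the Fano case) and the higher-order Sym-corrections to $\fvol$ do not contaminate the small quantum coordinates $\log(\tau_i)$. Lemma \ref{lem:connection_special_form_semi_fano}, whose vanishing first row modulo $\mathcal{I}_{>2}$ decouples the $DE(\varphi_1)$-direction from the remaining $DE(\varphi_i)$'s, forces the $f_0 - 1$ contribution to project entirely into the $\tau_1$ coordinate (which is not among the small quantum variables), while confining the $c_{il}\log(q_l) + g_{i(-1)}(z^p)$ contributions to the weight-$2$ directions $2 \leq i \leq \nu$, as desired.
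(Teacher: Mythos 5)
Your proposal has a genuine gap, and it sits exactly where the paper's proof does all of its work. In Step 1 you assert that in the expansion \eqref{eqn:expression_of_1_in_flat_basis} the coefficient $f_0$ ``may now be a nontrivial formal series of weight degree $0$'' and that this is compatible with $\zeta=1$ being a primitive form; in Step 3 you then write $\zeta-\fvol$ with no $u^{0}$-component along $DE(\varphi_1)$, and in your closing paragraph you claim any $f_0-1$ simply projects into the $\tau_1$ coordinate. These statements are mutually inconsistent, and the last one contradicts the theorem itself, which asserts $\tau_j=0$ for all $j\neq 2,\dots,\nu$, in particular $\tau_1=0$. More seriously, if $f_0$ were non-constant then $1-DE(\varphi_1)\notin\mathcal{H}_{F,-}$, so $1$ would fail condition (b) of Proposition \ref{prop:primitive_form} (for suitable $v$ the $u^{0}$-part of $\nabla_v 1$ would be $(v f_0)\,DE(\varphi_1)\neq 0$), and $u[\zeta-\fvol]$ would not even lie in $u\mathcal{H}_{F,-}$, so the period map could not be read off as you do. The same issue arises for $f_{-1}$, which in the semi-Fano case has weight degree $2$ and is \emph{not} excluded by the degree count; you never address it, yet $f_{-1}=0$ is exactly what gives $\tau_1=0$.

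The paper's proof is devoted precisely to these two points: it proves that the change-of-frame matrix $G$ defined by $[\psi_1,\dots,\psi_\mu]=[DE(\varphi_1),\dots,DE(\varphi_\mu)]G$ has first row $(1,0,\dots,0)$ modulo $\mathcal{I}_{>2}$, whence $f_0=1$ and $f_{-1}=0$. This does not follow from Lemma \ref{lem:connection_special_form_semi_fano} alone, which only controls the connection matrix $A$ in the monomial frame $\psi_i$; one needs the inductive argument over a decreasing chain of monomial ideals $\mathcal{J}_1\supset\mathcal{J}_2\supset\cdots$ with one-dimensional graded pieces, the relation $GA=u\,dG+NG$, the induction hypothesis on the first row of $G_i$, and the nilpotency of $\iota_n N$ for a constant vector field $n$ with $\iota_n d\log(z^{p})\neq 0$, which together force the first row of each correction $\tilde G$ to vanish. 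Your proposal contains no substitute for this step, so the central claims --- that $\zeta=1$ is primitive and that $u(\zeta-\fvol)$ has components only along $DE(\varphi_2),\dots,DE(\varphi_\nu)$ --- remain unproven; the correct conclusion is $f_0-1=0$, not that $f_0-1$ harmlessly lands in $\tau_1$. Your Steps 2 and 3 (the flat section $\fvol$ producing the $\sum_l c_{il}\log(q_l)$ terms, and extracting the $u^{-1}$-coefficient of $\zeta-\fvol$) do match the paper, but they become valid only after $f_0=1$ and $f_{-1}=0$ are established.
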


\begin{proof}
	Recall that in equation \eqref{eqn:expression_of_1_in_flat_basis}, we have $g_{ik} = 0 = h_{jk}$ for $k\geq 0$, $f_k=0$ for $k>0$ and $h_{j(-1)} = 0$. The only possibility that cannot be ruled out by a degree argument is when $f_{0}$ and $g_{i(-1)}$ are series in $z^{p}$ where $p$'s have weight degree $0$. The presence of $g_{i(-1)}$ contributes to the corresponding terms in $\log(\tau_i)$, so we simply have to rule out the presence of $f_0$ and $f_{-1}$. Once again we have to look at the expression of $\psi_1$ in terms of $DE(\varphi_i)$'s.
	
	Fix a decreasing sequence of ideals $\mathcal{J}_i$ containing $\mathcal{I}_{>2}$ such that $\mathcal{J}_1 = m$, $\mathcal{J}_{i}/ \mathcal{J}_{i+1}$ is one-dimensional and generated by $z^{p}$ for some $p$, and their intersection is $\mathcal{I}_{>2}$. Such a sequence can be found because $P$ is a toric monoid, using arguments similar to those in \cite[below Lemma 4.17]{KLM}. We define a $\mu \times \mu$ matrix $G$ by the  change of frames $[\psi_1,\dots,\psi_\mu]  = [DE(\varphi_1),\dots,DE(\varphi_\mu)]G$. Passing to the quotient $R((u))/\mathcal{J}_i((u))$, we write the corresponding matrix as $G_i$, which are compatible for different $i$'s. Notice that $G_1 = \text{id}$.  
	
	Write $\Omega_{\hat S^{\dag}}^1 \cong \hat R \otimes_{\C} V$ with elements in $V$ written as $d\log q_i$'s. The connection matrix when restricting to $H_{f,\pm}$ with respect to the basis $\varphi_i$'s is written as $u^{-1} N$, where $N$ is consists of constant $1$-forms with coefficient in $V$. Treating $N$ as a matrix of $1$-forms in $\Omega_{\hat S^{\dag}}^1$, it gives the connection $1$-form with respect to the frame $DE(\varphi_i)$'s by our construction. We have the change of frames relation
	$$
	GA = u dG + N G
	$$ 
	relating $N$ and the connection matrix $A$ in Lemma \ref{lem:connection_special_form_semi_fano}. Now we can argue order by order that each $G_i$ is of the form 
	$$
	G_i = \begin{bmatrix}
	1 & 0 & 0 \\
	* & * & * \\
	* & * & * 
	\end{bmatrix}
	$$
	when written as a block matrix as in Lemma \ref{lem:connection_special_form_semi_fano}.  
	
	We can write $G_i$ as a matrix with coefficients in $R((u)) \setminus \mathcal{J}_i((u))$, and relate $G_{i+1} =G_i +  z^{p} \tilde{G}$ for $z^{p} \in \mathcal{J}_{i} \setminus \mathcal{J}_{i+1}$ and some $\tilde{G}$ with coefficients in $\C((u))$. Similarly we write $A_{i+1} = A_i + z^{p} \tilde{A}$ for the matrix $A$. Suppose the claim is true for $G_i$, and we consider the change of frames equation modulo $R((u))/\mathcal{J}_{i+1}$. We obtain
	$$
	(G_i + z^p \tilde{G})(A_i + z^{p} \tilde{A}) = u d(G_i + z^{p} \tilde{G}) + N(G_i + z^{p} \tilde{G}),
	$$
	and the relation 
	$$
	( ud\log(z^{p})\tilde{G}+ N\tilde{G}- \tilde{G}N)z^{p} =  G_i A_i - udG_i - NG_i + z^{p}\tilde{A}. 
	$$
	From the induction hypothesis and Lemma \ref{lem:connection_special_form_semi_fano}, we see that the RHS is a matrix with zero first row. Contracting with a constant vector field $n \in V^*$ such that $\iota d\log(z^{p}) \neq 0$, $\iota_n N$ becomes a lower triangular nilpotent matrix. Then $\tilde{G}$ is forced to have zero first row from the above equation. 
\end{proof}

\subsubsection{Explicit computations for the Hirzebruch surface $\mathbb{F}_2$}\label{F2}
    
For the Hirzebruch surface $\mathbb{F}_2$ (which is semi-Fano), perturbative expansions for the primitive form $\zeta$ and the semi-infinite period map can be calculated explicitly by hand.

Associated to $\mathbb{F}_2$ is the fan $\Sigma$ in $N_{\R} = \R^2$ whose $1$-dimensional cones are generated by $e_1 = (1,0)$, $e_2=(0,1)$, $e_3 = (-1,2)$ and $e_4 = (0,-1)$. Let $D_i \subset \mathbb{F}_2$ be the toric divisor corresponding to the ray spanned by $e_i$. The monoid $P$ is isomorphic to $\N^2$ with generators $C_1$ and $C_2$ corresponding to rational curves lying in $D_1$ and $D_2$ respectively. Here $C_1$ has Chern number $2$ and $C_2$ has Chern number $0$. The universal piecewise linear function $\varphi: |\Sigma| \rightarrow \R^2$ is defined by 
\begin{align*}
	\varphi(e_1) = (0,0) = \varphi(e_2),\quad \varphi(e_3) = (0,1), \quad \varphi(e_4) = (1,0). 
\end{align*}
Letting $q_1 = z^{(1,0)}$ and $q_2 = z^{(0,1)}$, we see that $q_1$ has weight degree $2$ and $q_2$ has weight degree $0$. $X^\dag$ is given as a subvariety of $\Spec (\C[q_1,q_2][z_1,z_2,z_3,z_4])$ by the relations
$$
z_2z_4 = q_1, \quad z_1z_3 = q_2 z_2^2. 
$$

To compare with known results, we will only compute the canonical coordinates and the primitive form when restricted to $\hat R = \C[[q_1,q_2]]$ (the small quantum variables), ignoring the other two parameters $t_1,t_2$. The Landau-Ginzburg superpotential is given by $F = z_1 + z_2 +z_3 +z_4$. The sheaf $\Omega_{\hat X^{\dag}/\hat S^{\dag}}^*$ of log differential forms is locally free with generators $d\log z_1$, $d\log z_2$, and the holomorphic volume form is taken to be $\Omega = d\log z_1 \wedge d\log z_2$. The cohomology $H^*(\Theta^*_{\hat X^{\dag}/\hat S^{\dag}}[[u]],u\partial + \{F,\cdot\})$ is concentrated at degree $0$ which is generated by polynomials in $z_i$'s, $q_i$'s and $u$.

Consider the action of $u\partial + \{F,\cdot\}$, it leads to the following identities
\begin{align*}
	(u\partial + \{F,\cdot\})(\tilde{\theta}_1) &= z_1-z_3 = 0, \\
	(u\partial + \{F,\cdot\})(\tilde{\theta}_2) &= z_2+2z_3-z_4=0, \\
	(u\pa + \{F,\cdot \})(z_2 (\tilde{\theta}_2+2\tilde{\theta}_1)) &= z_2^2 -q_1 +u z_2 +2z_1z_2 =0, \\
	 (u\pa + \{F,\cdot \})(z_1 \tilde{\theta}_2) &= z_1z_2+2q_2z_2^2 -z_1z_4 =0.
\end{align*}
They imply the following useful relations:
\begin{align*}
z_1 = z_3, \quad
z_2 = -2z_3 + z_4, \quad
z_2^2 = f(q_2)(q_1 - uz_2 - 2z_1z_4),
\end{align*}
where $f(q_2) = \frac{1}{1-4q_2}$.
We arrange the maximal cones $\sigma_i$'s in $\Sigma$ in the counter-clockwise ordering so that $\sigma_1 = \R_{\geq 0} e_1 + \R_{\geq 0} e_2$. This gives us frames $ 1$, $ z_3$, $ z_4$ and $z_1z_4$ in $H_{F,+}$ which restrict to $\varphi_i$'s in $H_{f,+}$ after modulo $q_1,q_2$, according to the choice made after equation \eqref{eqn:moving_lemma_condition}. 

Now we compute the Gauss-Manin connection acting on the frames $\psi_i$'s according to Section \ref{subsec:GM_connection_and_higher_residue}. We have 
\begin{align*}
\nabla_{\frac{\pa}{\pa \log q_2}} 1 & = \frac{1}{u} z_3,\\
\nabla_{\frac{\pa}{\pa \log q_2}} z_3 & = \frac{q_2 f(q_2)}{u} (q_1 - u(z_4-2z_3) -2z_1z_4),\\
\nabla_{\frac{\pa}{\pa \log q_2}} z_4 & = \frac{1}{u} z_1z_4,\\
\nabla_{\frac{\pa}{\pa \log q_2}} z_1z_4 & = \frac{q_1q_2}{u} (z_4-2z_3).
\end{align*}
These formula are obtained from equation \eqref{eqn:GM_connection_formula}. For instance, if we compute using the identification $\mathcal{H}_{F,\pm} \cong \mathcal{H}^{F}_\pm$ via $\Omega = d\log z_1 \wedge d\log z_2 = -d\log z_3 \wedge d\log z_2$, we have
\begin{align*}
\nabla_{\frac{\pa}{\pa \log q_2}} (z_3 \Omega) 
& = \iota_{\frac{\pa}{\pa \log q_2}}(\pa + u^{-1} dF\wedge)(z_3 d\log z_2 \wedge d\log z_3)\\
& =\iota_{\frac{\pa}{\pa \log q_2}} \frac{1}{u} z_1z_3 d\log z_1 \wedge d\log z_2 \wedge d\log z_3 \\
& = \iota_{\frac{\pa}{\pa \log q_2}} \frac{1}{u} q_2 z_2^2 d\log q_2 \wedge \Omega \\
& = \frac{q_2 f(q_2)}{u} (q_1 - u(z_4-2z_3) -2z_1z_4) \Omega.\\
\end{align*}
Similarly we have 
\begin{align*}
\nabla_{\frac{\pa}{\pa \log q_1}} 1 & = \frac{1}{u} z_4,\\
\nabla_{\frac{\pa}{\pa \log q_1}} z_3 & = \frac{1}{u} z_1z_4,\\
\nabla_{\frac{\pa}{\pa \log q_1}} z_4 & = \frac{1}{u}(q_1 + 2z_1z_4),\\
\nabla_{\frac{\pa}{\pa \log q_1}} z_1z_4 & = \frac{q_1(1+q_2)}{u}( 2z_4 -3z_3).
\end{align*}

Next we solve for the sections $DE(\varphi_i)$'s. We first restrict ourselves to $q_1 = 0$ and then extend to sections in $q_1$. Let $\psi_i$'s be the restrictions of $DE(\varphi_i)$'s to $q_1 = 0$. Then we can simply take $\psi_3 = z_4$ and $\psi_4 = z_1z_4$. It remains to solve for $\psi_1$ and $\psi_2$. To do so, we let $\psi_2 = a(q_2) z_3 + b(q_2)z_4 + c(q_2) z_1z_4$. Then the equation $\nabla_{\frac{\pa}{\pa \log q_2}} \psi_2 = 0$ gives us
\begin{align*}
a(q_2) & = (f(q_2))^{-1/2},\\
b(q_2) & = \frac{1}{2}(1-a(q_2)),\\
c'(q_2) & = \frac{1}{u} \left(2 (f(q_2))^{1/2} -\frac{b(q_2)}{q_2}\right), 
\end{align*}
where $c(q_2)$ is determined by the initial condition $c(0) =0$. Letting $\psi_1 = 1 + \alpha(q_2) z_3 + \beta(q_2) z_4 + \gamma(q_2) z_1z_4$, we have the equation $\nabla_{\frac{\pa}{\pa \log q_2}} \psi_1 = \frac{1}{u} \psi_2$ which gives us
\begin{align*}
\alpha'(q_2) & = \frac{1}{uq_2}\left(1-\frac{1}{a(q_2)}\right),\\
\beta'(q_2) &  = \frac{1}{uq_2}\frac{b(q_2)}{a(q_2)} = \frac{-1}{2}\alpha'(q_2),\\
\gamma'(q_2) & = \frac{1}{uq_2} \frac{c(q_2)}{a(q_2)},
\end{align*}
that is determined by the initial condition $\alpha(0) = \beta(0) = \gamma(0) = 0$. It is worth noting that $a(q_2), b(q_2) \in \C[q_2]$, and $\alpha(q_2) , \beta(q_2), c(q_2) \in \C[q_2] u^{-1}$ while $\gamma(q_2) \in  \C[q_2] u^{-2}$. 

To determine the primitive form, we express 
$$
1 = \sum_{i=1}^4 \delta_i \cdot DE(\varphi_i) 
$$
with $\delta_i = \delta_i(q_1,q_2,u) = \sum_{j=-\infty}^{\infty}\delta_{ij}(q_1,q_2)u^{j}$ such that  $\delta_{ij}(q_1,q_2) \in \C[[q_1,q_2]]$ and the summation starts from $j = -N_k$ if we modulo $(q_1,q_2)^{k+1}$. The weight degrees of $DE(\varphi_1)$, $DE(\varphi_2)$, $DE(\varphi_3)$ and $DE(\varphi_4)$ are $0$, $2$, $2$ and $4$ respectively. From these, we know that $\delta_1$ has weight degree $0$, $\delta_2$ and $\delta_3$ both have weight degree $-2$ and $\delta_4$ has weight degree $-4$. Since the weight degree of $q_1$ is $4$ and that of $q_2$ is $0$, we see that $\delta_{i,j\geq 0} = 0$ for $i>0$, $\delta_{0,j>0} = 0$ and $\delta_{0,0}$ is independent of $q_1$. Modulo $q_1$ we find that $\delta_{0,0} \cong 1$ from the expression of $\psi_1$. All in all, we see that the primitive form is simply given by $\zeta = 1$. 

Finally we compute the canonical coordinates using $\delta_{i,-1}(q_1,q_2)$. The weight degree requirement forces $\delta_{0,-1} = 0 = \delta_{4,-1}$, and $\delta_{2,-1}$, $\delta_{3,-1}$ are functions which depend only on $q_2$. Therefore, we see that $\delta_{2,-1}(q_2) = -\alpha(q_2) u$ and $\delta_{3,-1} = -\beta(q_2) u$. To compute the period map, we first compute 
\begin{equation}\label{eqn:flat_volume_element_example}
\begin{split}
\mathbb{1} = & DE(\varphi_1) - \log(q_2)\frac{DE(\varphi_2)}{u} - \log(q_1) \frac{DE(\varphi_3)}{u}  \\
& \qquad\qquad\qquad + (\log(q_1))^2 \frac{DE(\varphi_4)}{u^2} + \log(q_1) \log(q_2) \frac{DE(\varphi_4)}{u^2}. 
\end{split}
\end{equation}
Then we find the constant term in $u$ in the expression $u(\zeta-\mathbb{1})$, which will be
$$
(\log(q_2)-\alpha(q_2))DE(\varphi_2) +(\log(q_1)-\beta(q_2))DE(\varphi_3).
$$
The period map is then given by
\begin{align*}
\log (\tau_2) &= \log(q_2) - u \alpha(q_2) = \log(q_1) + 2 u \beta(q_2),\\
\log(\tau_3) & = \log(q_1) - u\beta(q_2),
\end{align*}
where we let $\log(\tau_3)$, $\log(\tau_2)$ be the coefficients of $DE(\varphi_3)$ and $DE(\varphi_2)$ respectively.
Taking $\frac{\pa}{\pa \log(q_2)}(\log \tau_2) = 1 - u\alpha'(q_2) = \frac{1}{a(q_2)} = f(q_2)^{1/2} $, and comparing with the known formulae
\begin{align*}
 q_2 &= \tau_2 (1+\tau_2)^{-2}, \\
 q_1 & = \tau_3(1+q_2)
\end{align*}
for the mirror map (in e.g., \cite{Chan-Lau10}), we conclude that the two expressions for $\tau_2(q_2)$ agree.

\begin{appendix}

\section{Dolbeault resolution}\label{Dolbeault}

This section is aimed at constructing Dolbeault resolutions of various sheaves on the analytic space $X$. We begin by recalling the definition of a polytopal complex, which is a generalization of the more familiar concept of a simplicial complex.
\begin{definition}
    A \emph{polytopal complex} $\Delta$ is a set of polytopes that satisfies the following conditions:
    \begin{enumerate}
        \item every face of a polytope in $\Delta$ is also in $\Delta$;
        \item the (possibly empty) intersection of any two polytopes $\sigma_1, \sigma_2 \in \Delta$ is a face of both $\sigma_1$ and $\sigma_2$.
     \end{enumerate}
\end{definition}
A polytopal complex is said to be \emph{finite} if it contains only finitely many polytopes. A polytope $\Delta$ together with its faces naturally defines a polytopal complex, which will still be denoted by $\Delta$.

\begin{definition}
    Let $\Delta$ be a finite polytopal complex. A \emph{(covariant) presheaf $\mathcal{V}$ of $\C$-vector space} on $\Delta$ consists of the following data:
    \begin{enumerate}
        \item for every polytope $\sigma$ of $\Delta$, a $\C$-vector space $\mathcal{V}(\sigma)$, and
        \item for every inclusion $\sigma_2 \subset \sigma_1$ of polytopes, a morphism of $\C$-vector spaces $r= r_{\sigma_1 \sigma_2}: \mathcal{V}(\sigma_2) \to \mathcal{V}(\sigma_1)$,
    \end{enumerate}
    such that the following conditions are satisfied:
    \begin{enumerate}
        \item $\mathcal{V}(\emptyset) = 0$,
        \item $r_{\sigma \sigma}$ is identity map on $\mathcal{V}(\sigma)$,
        \item if $\sigma_3 \subset \sigma_2 \subset \sigma_1$ are three polytopes of $\Delta$, then $r_{\sigma_1 \sigma_3} = r_{\sigma_1 \sigma_2} \circ r_{\sigma_2 \sigma_3}$.
    \end{enumerate}
\end{definition}

There is a \v{C}ech complex associated to the pair $(\Delta, \mathcal{V})$.
Equip each polytope in $\Delta$ with an orientation. Let $\Delta_p$ be the set of $p$-dimensional polytopes of $\Delta$.
For each $p\geq 0$, define $\mathcal{C}^p = \mathcal{C}^p(\Delta, \mathcal{V}) = \bigoplus_{\sigma \in \Delta_p} \mathcal{V}(\sigma)$, where the direct sum is taken over all polytopes in $\Delta_p$.
For $\alpha = (\alpha(\sigma)_{\sigma \in \Delta_p}) \in \mathcal{C}^p$, the \emph{combinatorial Čech differential} $\delta^p: \mathcal{C}^p \to \mathcal{C}^{p+1}$ is given by
\begin{equation*}
    (\delta^p \alpha)(\tau) = \sum_{\sigma \subset \tau} \pm r_{\tau \sigma}\alpha(\sigma)
\end{equation*}
where the sum if taken over all $p$ dimensional polytopes that is contained in $\tau$.
The sign here depends on the orientations of $\sigma$ and $\tau$: There are two orientations on $\sigma$, one is induced from that of  $\tau$, the other is the equipped orientation. If these two orientations on $\sigma$ agree, we take the positive sign, otherwise we take the negative sign.
Clearly $\delta^{p+1} \circ \delta^p = 0$, thus we get a Čech complex $(\mathcal{C}(\Delta, \mathcal{V}),\delta)$.

Take $\Delta$ to be the defining polytope of the projective toric manifold $Y$. Then each $p$-dimensional polytope $\sigma$ of $\Delta$ corresponds to an $(n-p)$-dimensional component of $X$, which is denoted by $\sigma^\circ$.
Moreover, the inverse inclusion relation holds, i.e., $\sigma \subset \tau$ if and only if $\tau^\circ \subset \sigma^\circ$.
For any $\sigma \in \Delta$, define
\begin{align*}
    \mathcal{V}(\sigma) := \mathscr{O}(\sigma^\circ)
\end{align*}
as the space of holomorphic functions on $\sigma^\circ$ and let $r$ be the restriction map induced by inclusion. Denote the resulting complex by $(\mathcal{C}(\Delta, \mathscr{O}),\delta)$.
Let $X_1,\cdots,X_k$ be $n$-dimensional components of $X$. Then $\mathcal{C}^0(\Delta, \mathscr{O}) = \bigoplus_{i=1}^k  \mathscr{O}(X_i)$. Let
\begin{align*}
    \mathscr{O}(X) \to \mathcal{C}^0(\Delta, \mathscr{O})
\end{align*}
be given by restriction map. Then we have the following complex:
\begin{equation} \label{check complex}
        0 \to \mathscr{O}(X) \to \mathcal{C}^0(\Delta, \mathscr{O}) \stackrel{\delta^0}{\longrightarrow} \mathcal{C}^1(\Delta, \mathscr{O}) \stackrel{\delta^1}{\longrightarrow} \mathcal{C}^2(\Delta, \mathscr{O}) \stackrel{\delta^2}{\longrightarrow} \cdots.
\end{equation}

\begin{proposition}\label{Check resolution_1}
    The complex \eqref{check complex} is exact.
\end{proposition}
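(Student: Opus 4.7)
The plan is to view (\ref{check complex}) as the global sections of a $\Gamma$-acyclic resolution of $\mathcal{O}_X$ and then invoke Cartan's Theorem B, using that $X$ is Stein (as a closed analytic subset of $\C^d$). First, for each face $\sigma \in \Delta$ let $i_\sigma : \sigma^\circ \hookrightarrow X$ denote the closed embedding and set
\[
\mathscr{F}_p := \bigoplus_{\sigma \in \Delta_p} (i_\sigma)_* \mathcal{O}_{\sigma^\circ}.
\]
The combinatorial \v{C}ech differentials $\delta^p$ extend to sheaf maps $\mathscr{F}_p \to \mathscr{F}_{p+1}$ and, together with the canonical map $\mathcal{O}_X \to \mathscr{F}_0$ given by restriction to the $n$-dimensional components, assemble into a complex of sheaves $0 \to \mathcal{O}_X \to \mathscr{F}_\bullet$ whose complex of global sections is precisely (\ref{check complex}) augmented by $\mathscr{O}(X)$.

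The heart of the argument will be a stalkwise exactness check. For each $x \in X$ there is a unique face $\tau(x) \in \Delta$ such that $x \in \tau(x)^\circ$ but $x$ lies in no deeper stratum; equivalently, the support $\{i : z_i(x) \neq 0\}$ generates the cone of $\Sigma$ corresponding to $\tau(x)$. Then $x \in \sigma^\circ$ if and only if $\sigma$ is a face of $\tau(x)$, so the stalk complex at $x$ involves only such $\sigma$. Choosing local analytic coordinates at $x$ and expanding germs as power series, I would decompose by monomial support in the variables transverse to $\tau(x)^\circ$: a monomial of fixed support $T$ appears only in those summands $\mathcal{O}_{\sigma^\circ, x}$ for which the cone of $\sigma$ contains the rays indexed by $T$, and the corresponding graded piece of the stalk complex is identified with the reduced augmented cochain complex of a sub-polytope of $\tau(x)$. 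Since every face of a polytope is contractible, each graded piece is acyclic, giving exactness of $0 \to \mathcal{O}_X \to \mathscr{F}_\bullet$.

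To conclude, each $\mathscr{F}_p$ is $\Gamma(X, -)$-acyclic, because every $\sigma^\circ$ is a coordinate linear subspace of $\C^d$ (hence Stein), so Cartan B applied to $\sigma^\circ$ gives $H^{>0}(X, (i_\sigma)_* \mathcal{O}_{\sigma^\circ}) = 0$. Therefore $H^p(\Gamma(X, \mathscr{F}_\bullet)) \cong H^p(X, \mathcal{O}_X)$, which equals $\mathscr{O}(X)$ for $p = 0$ and vanishes in positive degrees by Cartan B applied to $X$ itself; this is exactly the exactness of (\ref{check complex}). The main obstacle will be the stalkwise step: making the monomial-graded identification with cochain complexes of sub-polytopes precise, and bookkeeping how the local rings $\mathcal{O}_{\sigma^\circ, x}$ vary with $\sigma$. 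A useful sanity check is the $\mathbb{P}^2$ case, where $X = \{z_1 z_2 z_3 = 0\} \subset \C^3$ and the stalk complex at the origin reduces to the augmented cochain complex of the full triangle $\Delta$.
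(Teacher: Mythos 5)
Your sheaf-theoretic route is sound and genuinely different from the paper's argument. The paper proves exactness of \eqref{check complex} directly at the level of global sections: Lemma \ref{p=0} glues a compatible tuple into a global function by an explicit inclusion--exclusion over all faces of $\Delta$ (an Euler-characteristic cancellation), and Lemma \ref{p>0} inductively corrects a $\delta^p$-cocycle so that it vanishes on strata of larger and larger dimension, using the acyclicity of the cochain complex of $\Delta$ at the origin together with the parametrized linear-algebra Lemma \ref{parametrized solution}, and finally exhibits the remainder as an explicit coboundary. You instead sheafify the \v{C}ech complex, check exactness stalkwise, and then transfer to global sections via Steinness: $X$ and each coordinate subspace $\sigma^\circ$ are closed analytic subsets of $\C^d$, so Cartan B kills the higher cohomology of $\mathcal{O}_X$ and of each $(i_\sigma)_*\mathcal{O}_{\sigma^\circ}$ (the latter because $i_\sigma$ is a closed embedding), and the acyclic resolution then yields exactness of \eqref{check complex}. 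Your stalkwise reduction is correct as stated: at $x$ only the faces $\sigma\subseteq\tau(x)$ contribute, the \v{C}ech differential is diagonal with respect to the decomposition by the transverse monomial support $T$, and the $T$-graded piece is the augmented cellular cochain complex of the face of $\tau(x)$ corresponding to the cone spanned by the support of $x$ together with $T$ (it vanishes when this set spans no cone), hence acyclic by contractibility. The one point you should nail down is that germs are \emph{convergent} power series, so the monomial decomposition is not literally a direct sum; this is harmless because only finitely many graded complexes occur (one per face of $\tau(x)$), so one fixed $\C$-linear contracting homotopy per face, applied coefficientwise, solves the cocycle equation with the same radius of convergence. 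Comparing the two: your argument is shorter and more conceptual, localizes the combinatorics to contractibility of faces, and adapts immediately to Propositions \ref{Check resolution_2} and \ref{Check resolution_3} (replacing Cartan B by softness of $\A^{0,q}$ in the latter case); the paper's proof is elementary and self-contained, avoids coherence and Stein theory altogether, and produces explicit primitives, which is in the spirit of the perturbative constructions used later in the paper.
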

\begin{remark}
    As each $p$-dimensional component $\sigma^\circ$ of $X$ is an intersection of coordinate hyperplanes, the canonical projection $\C^d \to \sigma^\circ$ lifts any function $g$ on $\sigma^\circ$ to a function on $\C^d$. Then by restriction we get a function on $X$ and each higher dimensional component containing $\sigma^\circ$. By abuse of notation, these functions will still be denoted by $g$.
\end{remark}
The proof of Proposition \ref{Check resolution_1} consists of three lemmas.
\begin{lemma} \label{p=0}
    The $p=0$ joint of the complex (\ref{check complex}) is exact.
\end{lemma}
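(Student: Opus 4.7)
The sequence \eqref{check complex} at its left end reads
$$0 \to \mathscr{O}(X) \to \mathcal{C}^0(\Delta,\mathscr{O}) \xrightarrow{\delta^0} \mathcal{C}^1(\Delta,\mathscr{O}),$$
so exactness at the $p=0$ joint amounts to (i) injectivity of the first map and (ii) $\ker \delta^0 \subseteq \mathrm{im}\,(\mathscr{O}(X) \to \mathcal{C}^0)$. Injectivity is immediate, since $|X| = \bigcup_i X_i$ as a topological space: a holomorphic function on $X$ vanishing on every top-dimensional component $X_i$ must vanish identically.

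For (ii), take $(f_i)_i \in \ker \delta^0$, meaning $f_i|_{D_{ij}} = f_j|_{D_{ij}}$ on every $(n-1)$-dimensional intersection $D_{ij} = X_i \cap X_j$. To glue the $f_i$'s into an element of $\mathscr{O}(X)$, the plan is to first upgrade this pairwise codimension-one compatibility to compatibility on \emph{all} pairwise intersections. Let $\sigma_i, \sigma_j$ be two maximal cones of $\Sigma$ meeting in a cone $\tau = \sigma_i \cap \sigma_j$ of arbitrary dimension $k \leq n-1$, so that $X_i \cap X_j$ is the $k$-dimensional coordinate subspace corresponding to $\tau$. Since $\Sigma$ is a smooth complete fan, the star of $\tau$ descends to a smooth complete fan in $N_\R / \R\tau$ whose maximal cones are exactly those cones of $\Sigma$ containing $\tau$. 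A smooth complete fan is connected in codimension one, so there exists a chain
$$\sigma_i = \sigma_{i_0}, \sigma_{i_1}, \ldots, \sigma_{i_m} = \sigma_j,$$
of maximal cones all containing $\tau$, such that consecutive $\sigma_{i_\ell}$ and $\sigma_{i_{\ell+1}}$ share an $(n-1)$-dimensional face containing $\tau$. Applying the given codimension-one compatibility along this walk and then restricting to the subvariety $\tau^\circ$ yields $f_i|_{X_i \cap X_j} = f_j|_{X_i \cap X_j}$.

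With full pairwise compatibility in hand, the tuple $(f_i)$ defines a continuous function $f$ on $|X|$ that is holomorphic on each irreducible component; by the sheaf property of $\mathscr{O}_X$ this is a bona fide element of $\mathscr{O}(X)$, mapping to $(f_i)$ in $\mathcal{C}^0$. The only step requiring real argument is the star-connectedness input used to propagate codimension-one compatibility to higher-codimension intersections; everything else is bookkeeping using that $X_i$ is an affine coordinate subspace and that restrictions of holomorphic functions glue by the sheaf axiom.
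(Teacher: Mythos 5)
Your first step---upgrading the codimension-one compatibility $f_i|_{D_{ij}}=f_j|_{D_{ij}}$ to compatibility on all pairwise intersections by walking through a chain of maximal cones containing $\tau$ that share codimension-one faces---is sound, and is essentially the paper's own argument in dual form (the paper walks along edges of the polytope face corresponding to $X_i\cap X_j$, connecting the vertices $v_i$ and $v_j$).

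The second step, however, contains a genuine gap. You conclude by saying that a tuple $(f_i)$ compatible on all intersections ``defines a continuous function $f$ on $|X|$ that is holomorphic on each irreducible component; by the sheaf property of $\mathscr{O}_X$ this is a bona fide element of $\mathscr{O}(X)$.'' The sheaf axiom for $\mathscr{O}_X$ only glues sections over an \emph{open} cover, whereas the components $X_i$ form a closed cover of $X$, so it gives you nothing here. What is actually needed is the statement that a function on $X$ which is holomorphic on each component and compatible on intersections is locally (equivalently, since everything is affine, globally) the restriction of a holomorphic function on $\C^d$ modulo the Stanley--Reisner ideal; for a general reduced analytic space, ``continuous and holomorphic on each component'' does \emph{not} imply membership in $\mathscr{O}(X)$ (this is the distinction between weakly holomorphic functions and sections of the structure sheaf), so some special feature of the coordinate-subspace arrangement must be used. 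This surjectivity is precisely the nontrivial content of the lemma, and it is what the bulk of the paper's proof supplies: the explicit inclusion--exclusion candidate
\begin{equation*}
    g \;=\; \sum_{\sigma\in\Delta} (-1)^{\dim\sigma}\,(g_i)|_{\sigma^\circ},
\end{equation*}
where each common restriction $(g_i)|_{\sigma^\circ}$ is lifted to $\C^d$ via the coordinate projection $\C^d\to\sigma^\circ$, followed by an Euler-characteristic computation (over the strata $\tau^\perp$ in the star of a fixed vertex $v_k$) showing that $g|_{X_k}=g_k$ for every maximal component. Without this construction, or an equivalent argument (e.g.\ an induction establishing exactness of $0\to\mathscr{O}(X)\to\bigoplus_i\mathscr{O}(X_i)\to\bigoplus_{i<j}\mathscr{O}(X_i\cap X_j)$ for unions of coordinate subspaces), your proof asserts rather than proves the main point, so the step you describe as ``bookkeeping'' is in fact where the work lies.
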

\begin{proof}
     The injection part is obvious. To prove the exactness, we should show that every $\delta^0$-closed $(g_i) \in \mathcal{C}^0(\Delta, \mathscr{O})$ with $ g_i \in \mathscr{O}_{X_i}$ is the restriction of a global function on $X$.
     By definition, $\delta^0 (g_i) = 0$ implies $g_i|_{X_i \cap X_j} = g_j|_{X_i \cap X_j}$ if the intersection is $(n-1)$-dimensional. In fact, the same identity holds even if the intersection has codimension greater than $1$. This can be seen as follows. Since $X_i\cap X_j$ is a subset of both $X_i$ and $X_j$, both of the vertices $v_i$ and $v_j$ corresponding to $X_i$  and $X_j$ respectively are in the polytope $\sigma$ corresponding to $X_i\cap X_j$. We can find a sequence of vertices $v_i,v_s,v_t\cdots,v_u,v_j$ of $\sigma$ such that each pair of adjacent vertices is connected by an edge in $\sigma$. Now by the definition of $\delta^0$-closedness, we have
     \begin{align*}
         g_i|_{X_i\cap X_s} = g_s|_{X_i\cap X_s}, g_s|_{X_s\cap X_t} = g_t|_{X_s\cap X_t}, \cdots, g_u|_{X_u\cap X_j} = g_j|_{X_u\cap X_j}.
     \end{align*}
     However, we know that $X_i\cap X_j$ is a common subset of $X_i\cap X_s, X_s\cap X_t, \cdots$ and $X_u\cap X_j$, hence
     \begin{align*}
         g_i|_{X_i\cap X_j} = g_s|_{X_i\cap X_j} = \cdots = g_j|_{X_i\cap X_j}.
     \end{align*}
     Thus $(g_i)$ determines a unique function on each $\sigma^\circ$, which we denote by $(g_i)|_{\sigma^\circ}$.
     Define
     \begin{align*}
         g = \sum_{\sigma \in \Delta} (-1)^{\text{dim} \, \sigma} (g_i)|_{\sigma^\circ}.
     \end{align*}
     As remarked above, $g$ is seen as a global function on $X$, we will show that it restricts to $g_k$ on each $X_k$.
     Fix the vertex $v_k$, define $S_{v_k} := \{\tau : v_k \in \tau\}$.
     For $\tau \in S_{v_k}$, define $\tau^\bot := \{\sigma \in \Delta : \sigma \subset \tau, \sigma \nsubseteq \rho, \forall \rho \subset \tau, \rho \in S_{v_k}\}$.
     One sees easily by induction on dimension that the Euler characteristic $\chi(\tau^\bot)$ of each $\tau^\bot$ is zero unless $\tau = v_k$ and
     \begin{align*}
         \Delta = \coprod_{\tau \in S_{v_k}} \tau^{\bot}.
     \end{align*}
     Now $g$ can be written as
     \begin{align*}
         g = \sum_{\tau \in S_{v_k}} \sum_{\sigma \in \tau^{\bot}} (-1)^{\text{dim} \, \sigma} (g_i)|_{\sigma^\circ}.
     \end{align*}
     When $\tau \neq v_k$, for each $\sigma \in \tau^\bot$ the function induced on $X_k$ by $(g_i)|_{\sigma^\circ}$ is equal to $(g_i)|_{\tau^\circ}$. Hence the sum $\sum_{\sigma \in \tau^{\bot}} (-1)^{\text{dim} \, \sigma} (g_i)|_{\sigma^\circ}$ is just $\chi(\tau^\bot) (g_i)|_{\tau^\circ}=0$. It follows that $g = g_k$ on $X_k$.
\end{proof}

\begin{lemma} \label{parametrized solution}
    Assume matrices $A \in \C^{p\times q}, B\in \C^{q\times r}$ satisfy $\{\vec{a} \in \C^{q \times 1}: A \vec{a} = 0\} \subset \{B\vec{b} : \vec{b} \in \C^{r\times 1}\}$.
    Let $\vec{a}(z) := (a_1(z),\cdots,a_q(z))^T$ be a family of vectors that depends holomorphically on the parameters $z = (z_1,\cdots,z_k)$ and that for any $z$, $A \vec{a}(z)=0$.
    Then we can find vectors $\vec{b}(z)$ depending holomorphically on $z$ such that
    \begin{align*}
        B \vec{b}(z) = \vec{a}(z).
    \end{align*}
    Moreover, we can require $\vec{b}(z) = 0$ if $\vec{a}(z) = 0$.
\end{lemma}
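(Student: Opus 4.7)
My plan is to reduce the problem to pure linear algebra: since both the constraint $A\vec{a}(z)=0$ and the equation $B\vec{b}(z) = \vec{a}(z)$ are $\C$-linear in the fibers, I will construct a \emph{linear} right inverse of $B$ on the subspace $\ker A \subset \C^{q}$, and then compose it with $\vec{a}(z)$.

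More precisely, let $K := \ker A \subset \C^{q}$. The hypothesis says $K \subset \im B$, so the linear map $B: \C^{r} \to \C^{q}$ surjects onto a subspace containing $K$. Choose any linear complement $W$ of $\ker B$ in $\C^{r}$; then the restriction $B|_W : W \to \im B$ is a linear isomorphism. Composing its inverse with the inclusion $K \hookrightarrow \im B$ yields a $\C$-linear map
\begin{equation*}
S : K \longrightarrow \C^{r}, \qquad B \circ S = \mathrm{id}_{K}.
\end{equation*}

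Now define $\vec{b}(z) := S\bigl(\vec{a}(z)\bigr)$. By assumption $\vec{a}(z) \in K$ for every $z$, so $S(\vec{a}(z))$ is well-defined; holomorphicity of $\vec{b}(z)$ follows from the fact that the composition of a holomorphic map with a fixed $\C$-linear map is holomorphic. We compute $B\,\vec{b}(z) = B(S(\vec{a}(z))) = \vec{a}(z)$, which is the desired identity. Finally, since $S$ is linear, $S(0)=0$, hence $\vec{b}(z)=0$ whenever $\vec{a}(z)=0$, securing the last clause of the statement.

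There is essentially no obstacle here: the content is entirely in the existence of the linear section $S$, which is guaranteed by the inclusion $\ker A \subset \im B$ together with the elementary fact that any linear surjection of finite-dimensional vector spaces splits. The only point to keep in mind is that the splitting must be made \emph{before} plugging in $z$ so that the $z$-dependence enters only through $\vec{a}(z)$; this is what forces the resulting $\vec{b}(z)$ to inherit holomorphicity and the vanishing property for free.
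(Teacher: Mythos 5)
Your argument is correct: building a fixed, $z$-independent linear section $S$ of $B$ over $\ker A$ (via a complement of $\ker B$) and setting $\vec{b}(z)=S(\vec{a}(z))$ immediately gives holomorphy, the identity $B\vec{b}(z)=\vec{a}(z)$, and the vanishing property, since in coordinates $\vec{b}(z)$ is just a constant matrix applied to $\vec{a}(z)$. The paper dismisses this lemma as ``an easy exercise in linear algebra'' without giving details, and your constant-section argument is exactly the kind of proof intended, so there is nothing further to reconcile.
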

\begin{proof}
    This is an easy exercise in linear algebra.
\end{proof}

\begin{lemma} \label{p>0}
    The $p>0$ joints of the complex (\ref{check complex}) are exact.
\end{lemma}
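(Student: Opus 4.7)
The approach is to decompose the \v{C}ech complex along monomial degrees, reducing the exactness to contractibility of face complexes of polytopes, and then handling convergence via Lemma~\ref{parametrized solution} (or alternatively via a uniformly bounded chain contraction).

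Since $\Sigma$ is smooth, each component $\sigma^\circ$ is a coordinate linear subspace $\C^{J(\sigma)} \subset \C^d$, where $J(\sigma) \subset \{1,\dots,d\}$ indexes the rays generating the cone $C(\sigma) \in \Sigma$ corresponding to $\sigma$. Hence $\mathscr{O}(\sigma^\circ) \cong \C\{z_j : j \in J(\sigma)\}$ decomposes monomially, and every restriction map sends $z^m$ to itself or to zero. For each $m \in \N^d$ I would set
$$\Delta^m := \{\sigma \in \Delta : \mathrm{supp}(m) \subset J(\sigma)\}.$$
Smoothness of $\Sigma$ implies that either $\Delta^m = \emptyset$ (when the rays indexed by $\mathrm{supp}(m)$ fail to lie in a common cone of $\Sigma$), or these rays span a cone of $\Sigma$ whose corresponding face $\sigma_m \in \Delta$ is such that $\Delta^m = \{\sigma \in \Delta : \sigma \subset \sigma_m\}$, i.e. the face complex of the polytope $\sigma_m$. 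The monomial sub-complex $\mathcal{C}^\bullet_m \subset \mathcal{C}^\bullet(\Delta, \mathscr{O})$ spanned by the $z^m$-summands is then naturally identified with the cellular cochain complex $C^\bullet(\Delta^m; \C)$, which is acyclic in positive degrees because the face complex of a polytope is contractible (and an empty complex is vacuously so). This is the combinatorial core of the lemma.

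To upgrade this combinatorial vanishing to the holomorphic statement, I would expand each $\alpha(\sigma) = \sum_m a^m_\sigma z^m$ for a $\delta^p$-closed $\alpha$. The cocycle condition decouples along $m$, making each slice $(a^m_\sigma)$ a combinatorial cocycle which bounds via some $(b^m_\tau)$ by the contractibility just established. Setting $\beta(\tau) := \sum_m b^m_\tau z^m$, the remaining issue---which is the main technical obstacle---is to ensure that $\beta(\tau)$ defines a convergent (rather than merely formal) power series, so that it belongs to $\mathscr{O}(\tau^\circ)$. One clean resolution is to apply Lemma~\ref{parametrized solution} to the finite-dimensional \v{C}ech coboundary system at each polynomial-degree truncation: the fibrewise solvability over each monomial just established yields a solution depending holomorphically on the coordinates. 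Alternatively, one may construct a uniformly bounded chain contraction of the face complex of $\Delta$ (for instance from a shelling, or from a total order on vertices combined with a cone-contraction on a simplicial subdivision) that restricts compatibly to each $\Delta^m$; Cauchy estimates on the entire functions $\alpha(\sigma)$ then force the coefficients $b^m_\tau$ to decay rapidly enough in $m$ that $\beta(\tau)$ is also entire. Either route yields the desired $\beta \in \mathcal{C}^{p-1}(\Delta, \mathscr{O})$ with $\delta^{p-1}\beta = \alpha$.
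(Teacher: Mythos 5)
Your proof is correct, but it takes a genuinely different route from the paper's. Your combinatorial core — that the monomial slice of \eqref{check complex} at $m$ is the cellular cochain complex of $\Delta^m$, which is either empty or the face complex of the face $\sigma_m$ of $\Delta$ dual to the cone spanned by the rays in $\mathrm{supp}(m)$, hence acyclic in positive degrees — is the same fact the paper ultimately relies on (acyclicity of $(\mathcal{C}(\rho,\C),\delta)$ for faces $\rho$ of $\Delta$, which is what makes the hypothesis of Lemma~\ref{parametrized solution} hold), but the globalization is different. The paper never Taylor-expands: it corrects a closed cochain stratum by stratum, by downward induction on the dimension of the strata $\rho^\circ$, applying Lemma~\ref{parametrized solution} with $\rho^\circ$ itself as the holomorphic parameter space, so it stays inside holomorphic functions and never meets a convergence question; you instead decompose into monomial slices and must re-sum the slice-wise primitives. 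On that last step, only your second route really closes the argument: the first suggestion (invoking Lemma~\ref{parametrized solution} at polynomial-degree truncations) does not apply as stated, because in that setting there is no holomorphic parameter, and solvability of every finite truncation by itself does not yield a convergent primitive. For the second route, note that you do not actually need a single chain contraction of $\Delta$ restricting compatibly to every $\Delta^m$ (which would itself require an argument): the slice complex depends only on $\mathrm{supp}(m)$, so only finitely many isomorphism types occur (at most one per cone of $\Sigma$), and choosing an arbitrary linear solution operator for each type already gives a bound $|b^m_\tau|\le C\max_\sigma|a^m_\sigma|$ with $C$ independent of $m$; your Cauchy-estimate argument then shows $\beta(\tau)$ is entire, and comparing Taylor coefficients gives $\delta^{p-1}\beta=\alpha$. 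What your approach buys is a transparent, slice-by-slice computation of the cohomology (it also reproves Lemma~\ref{p=0} in degree $0$); what the paper's approach buys is robustness: it involves no quantitative estimates and carries over essentially verbatim to the coefficient sheaves in Propositions~\ref{Check resolution_2} and \ref{Check resolution_3}, in particular to smooth $(0,q)$-forms, where a convergent monomial decomposition is not available.
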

\begin{proof}
    Let $(g_{\sigma})_{\sigma \in \Delta_p}\in \mathcal{C}^p(\Delta, \mathscr{O})$ be $\delta^p$-closed with $p>0$, which means a holomorphic function $g_{\sigma}$ is assigned for each $p$-dimensional polytope and these functions satisfy certain compatibility conditions.
    Note that $\Delta$ is the unique $n$-dimensional polytope and it corresponds to the origin $o (= \Delta^\circ)$ of $\C^d$, and each function $g_{\sigma},\sigma \in \Delta_p$, restricts to a complex number $g_{\sigma}(0)$ at $o$.
    Look at the Čech complex $(\mathcal{C}(\Delta,\mathcal{V}), \delta)$ associated to the presheaf $\mathcal{V} := \mathscr{O}(o) = \C$.
    It's easy to see this complex computes the polyhedral cohomology of $\Delta$ and hence for $p>0$, the cohomologies vanish.
    Now $\delta^p$-closedness of $(g_{\sigma})_{\sigma \in \Delta_p}$ implies the $\delta^p$-closedness of $(g_{\sigma}(0))_{\sigma \in \Delta_p}$, hence we have
    \begin{align*}
        (g_{\sigma}(0))_{\sigma \in \Delta_p} = \delta^{p-1} (f^{\Delta}_{\tau})_{\tau \in \Delta_{p-1}}
    \end{align*}
    for some $(f^{\Delta}_{\tau})_{\tau \in \Delta_{p-1}} \in \mathcal{C}^{p-1}(\Delta, \mathscr{O}(o))$.
    The superscript $\Delta$ indicates the we are considering the restriction to $o = \Delta^\circ$.
    The projections $\tau^\circ \to o, \tau \in \Delta_{p-1}$ lift $(f^{\Delta}_{\tau})_{\tau \in \Delta_{p-1}}$ to chains in $\mathcal{C}^{p-1}(\Delta, \mathscr{O})$.
    Look at the $p$-cycle $(g^{(1)}_\sigma) := ((g_{\sigma}) - \delta^{p-1}(f^{\Delta}_\tau))$, we see by construction that the restriction of each $g^{(1)}_\sigma$ to $o$ is zero.

    Let $\rho \in \Delta_{n-1}$ be an $(n-1)$-dimensional polytope and consider the complex $(\mathcal{C}(\rho,\tilde{\mathscr{O}}(\rho^\circ)), \delta)$, where $\tilde{\mathscr{O}}(\rho^\circ)$ denotes the space of functions on $\rho^\circ$ that restricts to $0$ at $o$.
    Now closeness of $(g^{(1)}_\sigma)$ implies the closeness of the restriction of $(g^{(1)}_\sigma)_{\sigma \subset \rho}$ to a $p$-chain in $(\mathcal{C}(\rho,\tilde{\mathscr{O}}(\rho^\circ)), \delta)$ and the latter can be viewed as a family of $p$-cycles in $(\mathcal{C}(\rho,\C), \delta)$ that are parametrized by points of $\rho^\circ$.
    By Lemma \ref{parametrized solution}, we can find a $\rho^\circ$-parametrized $(p-1)$-chain $(f^{\rho}_{\tau})_{\tau \in \rho_{p-1}}$ in $(\mathcal{C}(\rho,\C), \delta)$ such that
    \begin{align*}
        \delta^{p-1} (f^{\rho}_{\tau})_{\tau \in \rho_{p-1}} = (g^{(1)}_\sigma)_{\sigma \subset \rho}.
    \end{align*}
    Moreover, $(f^{\rho}_{\tau})_{\tau \in \rho_{p-1}}$ is a chain in $(\mathcal{C}(\rho,\tilde{\mathscr{O}}(\rho^\circ)),\delta)$.
    If we define $f^{\rho}_{\tau} = 0$ for $\tau \in \Delta_{p-1}\setminus \rho_{p-1}$, we get now a $(p-1)$-chain $(f^{\rho}_{\tau})_{\tau \in \Delta_{p-1}}$ in $(\mathcal{C}(\Delta,\tilde{\mathscr{O}}(\rho^\circ)),\delta)$
    and the projection $\tau^\circ \to \rho^\circ$ allows us to view it as a chain in $(\mathcal{C}(\Delta,\mathscr{O}),\delta)$.
    Define
    \begin{align*}
        (g^{(2)}_\sigma) := ((g^{(1)}_{\sigma}) - \sum_{\rho \in \Delta_{n-1}} \delta^{p-1}(f^{\rho}_\tau)_{\tau \in \Delta_{p-1}}).
    \end{align*}
    We claim that the restriction of $(g^{(2)}_\sigma)$ to any $\rho^\circ,\rho \in \Delta_{n-1}$ is zero.
    By construction, $((g^{(1)}_{\sigma}) - \delta^{p-1}(f^{\rho}_\tau))$ restricts to zero on $\rho^\circ$.
    Given $\rho' \in \Delta_{n-1}, \rho' \neq \rho$, if there exists $\tau \in \rho'_{p-1}\cap \rho_{p-1}$, $f^{\rho'}_{\tau}$ is the pull-back to $\tau^\circ$ of a function in $\tilde{\mathscr{O}}(\rho'^\circ)$.
    As $\rho' \neq \rho$, $\rho^\circ \cap \rho'^\circ$ is zero dimensional, hence by construction $f^{\rho'}_\tau$ restrict to $0$ on $\rho^\circ$.
    If no $(p-1)$-dimensional polytope is in $\rho'_{p-1}\cap \rho_{p-1}$, then by definition, $f^{\rho'}_\tau = 0, \forall \tau \subset \rho$.

    Inductively, we can define $(g^{(3)}_\sigma), (g^{(4)}_\sigma),\cdots, (g^{(n-p)}_\sigma) \in (\mathcal{C}(\Delta,\mathscr{O}),\delta)$ such that: a) each $(g^{(i)}_\sigma)$ differs from $(g^{(i-1)}_\sigma)$ by a $\delta^{p-1}$-boundary; b) any function $g^{(i)}_\sigma$ restrict to $0$ on $\rho^\circ$, where $\rho$ is any polytope that contains $\sigma$ and is of dimension not smaller than $n-i+1$.

    Now $(g^{(n-p)}_\sigma)_{\sigma \in \Delta_p}$ differs from $(g_\sigma)_{\sigma \in \Delta_p}$ by a $\delta^{p-1}$-boundary and the lemma is proved if $(g^{(n-p)}_\sigma)_{\sigma \in \Delta_p}$ can also be written as a $\delta^{p-1}$-boundary.
    Given any $g^{(n-p)}_\sigma$ on $\sigma^\circ$, let $\tau(\sigma) \subset \sigma$ be a $(p-1)$-dimensional polytope.
    The projection $p: \tau(\sigma)^\circ \to \sigma^\circ$ lifts $g^{(n-p)}_\sigma$ to a function on $\tau(\sigma)^\circ$, which is denoted by $g^{(n-p)}_{\tau(\sigma)}$. Since $g^{(n-p)}_\sigma$ restrict to $0$ on $\rho^\circ$ for any $\rho$ properly contains $\sigma$, $g^{(n-p)}_{\tau(\sigma)}$ will restrict to $0$ on any $p$ dimensional polytope $\sigma' \in \Delta_p \setminus \{\sigma\}$ that contains $\tau(\sigma)$.
    Define a $(p-1)$-chain $(g^{(n-p)}_{\sigma;\tau})_{\tau \in \Delta_{p-1}}$ by letting $g^{n-p}_{\sigma;\tau(\sigma)} = g^{n-p}_{\tau(\sigma)}$ and  $g^{(n-p)}_{\sigma;\tau'} = 0$ for any $\tau' \in \Delta_{p-1} \setminus \{\tau(\sigma)\}$, then the $\sigma$-component of $\delta^{p-1} (g^{(n-p)}_{\sigma;\tau})$ is $\pm g^{(n-p)}_\sigma$, in which the sign depends on the orientations of $\sigma$ and $\tau$.
    We put the sign before $(g^{(n-p)}_{\sigma;\tau})$, which by abuse of notation we still denote by $(g^{(n-p)}_{\sigma;\tau})$, so that the $\sigma$-component of $\delta^{p-1} (g^{(n-p)}_{\sigma;\tau})_{\tau \in \Delta_{p-1}}$ is exactly $g^{(n-p)}_\sigma$.
    Taking summation over all $\sigma \in \Delta_{p-1}$, we finally have
    \begin{align*}
        (g^{(n-p)}_\sigma)_{\sigma \in \Delta_p} = \delta^{p-1} \sum_{\sigma \in \Delta_p} (g^{(n-p)}_{\sigma;\tau})_{\tau \in \Delta_{p-1}},
    \end{align*}
    i.e., $(g^{(n-p)}_\sigma)_{\sigma \in \Delta_p}$ is a $\delta^{p-1}$-boundary. Thus the lemma is proved.
\end{proof}

Combining Lemmas \ref{p=0} and \ref{p>0} gives Proposition \ref{Check resolution_1}.

For any $ 0 < p \leq n$, we can similarly define the complex $(\mathcal{C}(\Delta, \Omega^p), \delta)$, where the presheaf $\mathcal{V} := \Omega^p$ is defined by assigning to $\sigma$ the space $\Omega(\sigma^\circ)$ of holomorphic $p$-forms on $\sigma^\circ$ and the map $r$ is the restriction map.
Let $\tilde{\Omega}^p(X)$ be the space of holomorphic $p$-forms on $X$, or in other words,
\begin{align*}
    \tilde{\Omega}^p(X) := \{(w_1,\cdots,w_k) : w_i \in \Omega^p(X_i), \omega^i|_{X_i \cap X_j} = \omega^j|_{X_i \cap X_j}, \forall \text{dim}(X_i \cap X_j) = n-1\}
\end{align*}
and let $\tilde{\Omega}^p(X) \to \mathcal{C}^0(\Delta, \Omega^p)$ be the natural map given by restriction. We obtain the following complex:
\begin{equation} \label{check complex_2}
        0 \to \tilde{\Omega}^p(X) \to \mathcal{C}^0(\Delta, \Omega^p) \stackrel{\delta^0}{\longrightarrow} \mathcal{C}^1(\Delta, \Omega^p) \stackrel{\delta^1}{\longrightarrow} \mathcal{C}^2(\Delta, \Omega^p) \stackrel{\delta^2}{\longrightarrow} \cdots.
\end{equation}

\begin{proposition} \label{Check resolution_2}
    The complex (\ref{check complex_2}) is exact.
\end{proposition}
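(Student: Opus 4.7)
The plan is to decompose $\Omega^p$ into direction components indexed by $p$-subsets of $\{1,\ldots,d\}$ and reduce the statement to the exactness already proved in Proposition~\ref{Check resolution_1}.

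For each $p$-element subset $I = \{i_1<\cdots<i_p\}\subset \{1,\ldots,d\}$, write $dz_I := dz_{i_1}\wedge \cdots \wedge dz_{i_p}$ and define a subpresheaf $\mathcal{V}_I \subset \Omega^p$ by
$$
\mathcal{V}_I(\sigma) := \begin{cases} \mathscr{O}(\sigma^\circ)\cdot dz_I, & I\cap S_\sigma = \emptyset,\\ 0, & \text{otherwise,}\end{cases}
$$
where $S_\sigma$ denotes the set of indices $j$ with $z_j\equiv 0$ on $\sigma^\circ$. Since $\sigma^\circ$ is an affine coordinate subspace of $\C^d$, we have $\Omega^p(\sigma^\circ) = \bigoplus_I \mathcal{V}_I(\sigma)$, and this decomposition is preserved by the restriction maps (restriction either keeps or annihilates each $dz_I$ factor, never mixing directions). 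So I would first observe the splittings $\mathcal{C}^\bullet(\Delta, \Omega^p) = \bigoplus_I \mathcal{C}^\bullet(\Delta, \mathcal{V}_I)$ and $\tilde{\Omega}^p(X) = \bigoplus_I \tilde{\mathcal{V}}_I(X)$, reducing the problem to each summand.

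Next I would identify the support $\Delta_I := \{\sigma : I\cap S_\sigma = \emptyset\}$, which is downward-closed in the face poset of $\Delta$ (since $\sigma \leq \tau$ implies $S_\sigma\subset S_\tau$), and hence is a polytopal subcomplex. By the face--cone duality for the projective toric manifold $Y$, $\sigma\in \Delta_I$ iff the dual cone $\sigma^*\in \Sigma$ contains every ray $\rho_i$ with $i\in I$. Therefore either $\Delta_I = \emptyset$ (when $\{\rho_i\}_{i\in I}$ is not contained in any cone of $\Sigma$), or $\Delta_I$ consists of the face $\sigma_I$ of $\Delta$ dual to the cone $\langle \rho_i\rangle_{i\in I}$ together with all its subfaces. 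In the nonempty case $\Delta_I$ is a single contractible polytope, and after stripping off the $dz_I$ factor we obtain a canonical isomorphism of complexes $\mathcal{C}^\bullet(\Delta, \mathcal{V}_I)\cong \mathcal{C}^\bullet(\Delta_I, \mathscr{O})$.

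Finally, for each nonempty $\Delta_I$ the proofs of Lemmas~\ref{p=0} and~\ref{p>0} apply verbatim, with $\Delta$ replaced by $\Delta_I$ and the role of the origin $o = \Delta^\circ$ played by the smallest stratum $\sigma_I^\circ$ (which still contains $0\in\C^d$). The only inputs used in those proofs are (i) vanishing of $\C$-valued polyhedral cohomology of $\Delta_I$ and each of its faces in positive degree, which holds by contractibility, and (ii) lifting of holomorphic functions between coordinate strata via projections, which is unchanged. Exactness at the $\mathcal{C}^0$ term follows directly from the definition of $\tilde{\Omega}^p(X)$ as tuples compatible on $(n-1)$-dimensional intersections. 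The only step that demands genuine thought is the face--cone identification of $\Delta_I$; the direction decomposition bypasses the one novel feature of $p$-forms (the anticommuting $dz_i$'s) and reduces everything to the structure-sheaf case handled in Proposition~\ref{Check resolution_1}.
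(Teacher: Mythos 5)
Your argument is correct, but it is genuinely different from the one in the paper. The paper's proof of Proposition \ref{Check resolution_2} is a one-observation affair: it notes that the entire proof of Proposition \ref{Check resolution_1} uses only two operations --- pullback of functions along the coordinate projections $\tau^\circ \to \sigma^\circ$ and restriction along inclusions --- and that these commute in the same way for holomorphic $p$-forms, so the proof of Lemmas \ref{p=0} and \ref{p>0} ``runs in the same way'' with form coefficients. You instead split $\Omega^p$ presheaf-theoretically into the monomial directions $dz_I$, observe that restriction between coordinate strata never mixes these directions, and identify the support $\Delta_I$ of each summand, via the face--cone duality and smoothness (simpliciality) of $\Sigma$, with the polytopal complex of the dual face $\sigma_I$; this reduces everything to a structure-sheaf-type statement on a smaller polytope. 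That is a clean and valid reduction, and it buys the advantage that one never has to re-inspect the original proof with form-valued coefficients. The only caveat is that your phrase ``apply verbatim'' is a slight overstatement: for the summand $\mathcal{C}^\bullet(\Delta_I,\mathcal{V}_I)$ the deepest stratum is $\sigma_I^\circ\cong\C^p$ rather than the single point $o=\Delta^\circ$, so in the first step of the analogue of Lemma \ref{p>0} one restricts to $\sigma_I^\circ$ and needs vanishing of positive-degree polyhedral cohomology of $\sigma_I$ with coefficients in the fixed vector space $\mathscr{O}(\sigma_I^\circ)$ instead of $\C$ (harmless, since this is just the constant-coefficient complex tensored with $\mathscr{O}(\sigma_I^\circ)$), and ``vanishing at $o$'' must be replaced throughout by ``vanishing on $\sigma_I^\circ$,'' which the requirement $\vec{b}(z)=0$ whenever $\vec{a}(z)=0$ in Lemma \ref{parametrized solution} still guarantees. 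Also note that exactness at the $\mathcal{C}^0$ spot is indeed immediate here, because $\tilde{\Omega}^p(X)$ is \emph{defined} as the kernel of $\delta^0$, whereas in Proposition \ref{Check resolution_1} the corresponding statement (Lemma \ref{p=0}) has actual content; your proof correctly exploits this. With these minor adjustments spelled out, your route is complete and somewhat more structural than the paper's, at the cost of the extra toric-combinatorial identification of $\Delta_I$.
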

\begin{proof}
    In fact, only two types of maps appeared in the proof of Proposition \ref{Check resolution_1}: the pull-back map induced from projection and the restriction map induced from inclusion.
    The key point is that for $\sigma, \tau \in \Delta$, a function on $\sigma^\circ$ induces the same function on $\tau^\circ$ ether by first pulling back to $\C^d$ then restricting to $\tau^\circ$ or first restricting to $\sigma^\circ \cap \tau^\circ$ and then pulling back to $\tau^\circ$.
    The situation is the same for differential forms, hence the proof runs in the same way.
\end{proof}

In the definitions of the complexes \eqref{check complex} and \eqref{check complex_2}, holomorphic functions can be replaced by smooth functions and holomorphic $p$-forms can be replaced by anti-holomorphic $q$-forms. Let $\A^{0,q}(X)$ be the space of smooth $(0,q)$-forms on $X$. Defining the complex $(\mathcal{C}(\Delta, \A^{0,q}), \delta)$ and the corresponding maps in a similar way as before, we can prove the following theorem along the same line of the proofs of Propositions \ref{Check resolution_1} and \ref{Check resolution_2}.

\begin{proposition} \label{Check resolution_3}
    For any $0 \leq q \leq n$, we have the following exact complex
    \begin{equation} \label{check complex_3}
        0 \to \A^{0,q}(X) \to \mathcal{C}^0(\Delta, \A^{0,q}) \stackrel{\delta^0}{\longrightarrow} \mathcal{C}^1(\Delta, \A^{0,q}) \stackrel{\delta^1}{\longrightarrow} \mathcal{C}^2(\Delta, \A^{0,q}) \stackrel{\delta^2}{\longrightarrow} \cdots.
    \end{equation}
\end{proposition}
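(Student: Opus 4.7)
The plan is to mimic the proofs of Propositions \ref{Check resolution_1} and \ref{Check resolution_2} almost verbatim, since the authors have already observed (in the proof of Proposition \ref{Check resolution_2}) that the only ingredients used are (i) pull-back of functions/forms along the coordinate projection $\C^d \to \sigma^\circ$, and (ii) restriction along inclusions $\tau^\circ \subset \sigma^\circ$. Both operations are perfectly well-defined for smooth $(0,q)$-forms, and the combinatorial bookkeeping (via polyhedral cohomology of $\Delta$ and Euler-characteristic inclusion-exclusion) is insensitive to the type of coefficient sheaf.

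For exactness at $\mathcal{C}^0(\Delta,\A^{0,q})$, I would first argue that any $\delta^0$-closed $0$-cochain $(\omega_i)$ satisfies $\omega_i|_{X_i\cap X_j}=\omega_j|_{X_i\cap X_j}$ on \emph{every} non-empty intersection, not just the codimension-$1$ ones. This is done exactly as in Lemma \ref{p=0}: given such an intersection, chain the vertices corresponding to $X_i$ and $X_j$ along edges of the polytope corresponding to $X_i\cap X_j$ and telescope the codimension-$1$ identities. The global smooth form is then built by the same inclusion-exclusion formula
\begin{align*}
\omega \;=\; \sum_{\sigma\in\Delta} (-1)^{\dim\sigma}\,(\omega_i)|_{\sigma^\circ},
\end{align*}
and the proof that $\omega|_{X_k}=\omega_k$ is purely the Euler-characteristic argument already given, with no change.

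For exactness in degrees $p>0$, I would copy the inductive scheme of Lemma \ref{p>0}. Starting from a $\delta^p$-closed $(\omega_\sigma)_{\sigma\in\Delta_p}$, first restrict each $\omega_\sigma$ to the origin $o=\Delta^\circ$, use vanishing of the positive-degree polyhedral cohomology of the contractible polytope $\Delta$ (now with coefficients $\A^{0,q}(o)$) to write the restriction as a $\delta^{p-1}$-boundary, and lift the primitives back by pull-back. Iterating over $(n-1),(n-2),\dots$ dimensional faces $\rho$, one subtracts off $\delta^{p-1}$-coboundaries to successively kill the restrictions of $(\omega_\sigma)$ to every $\rho^\circ$ with $\dim\rho\geq p+1$. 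The terminal step—showing that a cochain that vanishes on all higher-dimensional strata is a coboundary—is completed by the same face-choosing construction as in the original argument.

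The only point requiring genuine attention (and the one I would treat as the main, albeit minor, obstacle) is upgrading Lemma \ref{parametrized solution} from holomorphic parameters to smooth $(0,q)$-form coefficients: one needs that a smooth family $\vec{a}$ of vectors in $\ker A$ admits a smooth lift $\vec{b}$ with $B\vec{b}=\vec{a}$, vanishing whenever $\vec{a}$ does. This is purely linear algebra plus a choice of a \emph{constant} linear splitting of $B\colon \C^r\to \operatorname{im}B$; applying such a splitting coefficient-wise to the $\A^{0,q}$-valued cochain produces the required smooth lift and preserves the form-degree. With this one replacement in hand, every other step of Lemmas \ref{p=0} and \ref{p>0}—projections, restrictions, telescoping along edges, and the Euler-characteristic identity—carries over without modification, yielding the claimed exactness.
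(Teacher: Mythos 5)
Your proposal is correct and follows essentially the same route as the paper, which itself proves Proposition \ref{Check resolution_3} simply by asserting that the arguments of Propositions \ref{Check resolution_1} and \ref{Check resolution_2} (i.e.\ Lemmas \ref{p=0} and \ref{p>0}, using only pull-backs along coordinate projections and restrictions along inclusions) carry over verbatim to smooth $(0,q)$-form coefficients. Your observation that the only point to check is the smooth-coefficient version of Lemma \ref{parametrized solution}, handled by a constant linear splitting applied coefficient-wise, is exactly the intended ``easy linear algebra'' and closes that point.
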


\begin{theorem} [Dolbeaut resolution of $\mathscr{O}(X)$] \label{Dolbeault_resolution}
    The complex
    \begin{equation} \label{Dolbeault resolution_1}
        0 \to \A^{0,0}(X) \stackrel{\bar\pa}{\longrightarrow} \A^{0,1}(X) \stackrel{\bar\pa}{\longrightarrow} \A^{0,2}(X) \stackrel{\bar\pa}{\longrightarrow} \cdots
    \end{equation}
    is a resolution of $\mathscr{O}(X)$.
\end{theorem}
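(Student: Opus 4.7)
The plan is to set up a double complex and run the standard spectral sequence argument (or equivalently a diagram chase) that reduces the assertion to the two Čech resolutions already established together with the classical $\bar\pa$-Poincar\'e lemma on $\C^m$.

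Concretely, I would consider the double complex
\[
K^{p,q} := \mathcal{C}^p(\Delta, \A^{0,q}), \qquad p, q \geq 0,
\]
equipped with horizontal differential $\delta$ (the combinatorial \v{C}ech differential of Proposition \ref{Check resolution_3}) and vertical differential $\bar\pa$. There are three natural augmentations: $\A^{0,q}(X) \hookrightarrow K^{0,q}$ into the $0$-th column, $\mathcal{C}^p(\Delta, \mathscr{O}) \hookrightarrow K^{p,0}$ into the $0$-th row, and $\mathscr{O}(X) \hookrightarrow \A^{0,0}(X)$ at the corner. The augmented columns are exact by Proposition \ref{Check resolution_3}. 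For the augmented rows, observe that
\[
K^{p,q} \;=\; \bigoplus_{\sigma \in \Delta_p} \A^{0,q}(\sigma^\circ),
\]
and each $\sigma^\circ$ is a coordinate linear subspace of $\C^d$, hence biholomorphic to $\C^{n-p}$. Since $\C^{n-p}$ is Stein, the classical $\bar\pa$-Poincar\'e lemma gives that $(\A^{0,\bullet}(\sigma^\circ), \bar\pa)$ is a resolution of $\mathscr{O}(\sigma^\circ)$, and taking the direct sum over $\sigma \in \Delta_p$ shows that the augmented row starting from $\mathcal{C}^p(\Delta, \mathscr{O})$ is exact.

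Now I would invoke the standard double-complex principle: whenever both families of augmented sequences are exact, the natural maps from the two edge complexes into the total complex $\mathrm{Tot}(K^{\bullet,\bullet})$ are quasi-isomorphisms. Equivalently, the two spectral sequences of $\mathrm{Tot}(K)$ (one starting by taking $\bar\pa$, the other by taking $\delta$) both degenerate on the $E_2$-page and compute, respectively, the cohomology of $(\A^{0,\bullet}(X), \bar\pa)$ and of $(\mathcal{C}^\bullet(\Delta, \mathscr{O}), \delta)$. By Proposition \ref{Check resolution_1} the latter is concentrated in degree $0$, where it equals $\mathscr{O}(X)$. Transporting this across the quasi-isomorphism yields
\[
H^0(\A^{0,\bullet}(X), \bar\pa) \cong \mathscr{O}(X), \qquad H^i(\A^{0,\bullet}(X), \bar\pa) = 0 \text{ for } i>0,
\]
together with the required injectivity $\mathscr{O}(X) \hookrightarrow \A^{0,0}(X)$ coming from the corner augmentation.

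The argument is essentially formal once Propositions \ref{Check resolution_1} and \ref{Check resolution_3} are in hand. The only point that is not immediate is the exactness of the augmented rows, and the main (very mild) obstacle is to identify each $\sigma^\circ$ as a Stein manifold biholomorphic to $\C^{n-p}$ so that the $\bar\pa$-Poincar\'e lemma applies to give not just vanishing in positive degrees but also the correct $\ker(\bar\pa) = \mathscr{O}(\sigma^\circ)$ in degree $0$; this is ensured by the smoothness of the fan $\Sigma$, which guarantees that each component of $X$ is a coordinate affine subspace rather than a more complicated toric stratum.
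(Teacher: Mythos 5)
Your proposal is correct and follows essentially the same route as the paper: the paper also forms the double complex $\mathcal{C}^p(\Delta,\A^{0,q})$ with differentials $\delta$ and $(-1)^q\bar\pa$, uses the affineness (Steinness) of each stratum $\sigma^\circ$ to collapse one spectral sequence to $\mathcal{C}^\bullet(\Delta,\mathscr{O})$ and Propositions \ref{Check resolution_1} and \ref{Check resolution_3} to collapse the other to $\A^{0,\bullet}(X)$, then compares the two. No gaps; your identification of each $\sigma^\circ$ as a coordinate subspace $\cong \C^{n-p}$ is exactly the justification the paper compresses into ``each $\sigma^\circ$ is affine.''
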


\begin{proof}
    Since $\bar\pa$ commutes with the restriction map, we can construct a double complex $(C^{*,*}, \td_1,\td_2)$ with
    \begin{align*}
        C^{p,q} := \mathcal{C}^p(\Delta, \A^{0,q}), \td_1 := \delta^p, \td_2 := (-1)^q\bar\pa.
    \end{align*}
    Using the fact that each $\sigma^\circ$ is affine, one can first compute the cohomologies of $\td_2$ to show that the only possible nonzero terms on the first page of the spectral sequence associated to the filtration given by index $p$ is $^{I}E_1^{p,0} = \mathcal{C}^p(\Delta, \mathscr{O})$.
    So the spectral sequence degenerate at the second page and the only nonzero term is $^{I}E_2^{0,0} = \mathscr{O}(X)$ by applying Proposition \ref{Check resolution_1}.
    One can also first compute the cohomologies of $\td_1$ by applying Proposition \ref{Check resolution_3}. The result is that the only possible nonzero terms on the first page of the spectral sequence associated to the filtration given by index $q$ is $^{II}E_1^{0,q} = \tilde{\A}^{0,q}(X)$. So the spectral sequence must also degenerate at the second page and one must have $^{II}E_2^{0,0} = \mathscr{O}(X)$.
\end{proof}

\section{Regularized integral} \label{Section of regularized integral}

In this appendix, we outline a generalization of the theory of regularized integrals due to Si Li and Jie Zhou \cite{LZ} to arbitrary dimensions. Since no additional essential difficulty appears, we will omit the proofs and cite the corresponding results in \cite{LZ} instead.

Let $M$ be a compact complex manifold of dimension $n$, possibly with boundary $\pa M$. Let $D$ be a simple normal crossing divisor in $M$ which does not meet $\pa M$.
Let
\begin{equation*}
    \Omega_M^\bullet (\star D) := \bigcup_{n\geq 0} \Omega_M^\bullet (nD)
\end{equation*}
be the sheaf of meromorphic forms which are holomorphic on $M-D$ but possibly with arbitrary orders of poles along $D$.
Denote
\begin{equation*}
    \A^{p,q}(M,\star D) := \A^{0,q}(M, \Omega^p(\star D)), \, \A^k(M,\star D) := \bigoplus_{p+q = k} \A^{p,q}(M,\star D).
\end{equation*}
By definition, $\omega \in \A^k(M,\star D)$ if and only if $\omega$ is smooth on $M-D$ and locally around any $p \in D$, $\omega$ is of the form
\begin{equation*}
    \frac{\alpha}{z_1^{m_1} \cdots z_l^{m_l}};
\end{equation*}
here $z_1,\cdots,z_n$ are local coordinates around $p$ such that $D$ is defined by the equation $z_1 \cdots z_l = 0, l \leq n$, $m_i$'s are non-negative integers and $\alpha$ is a smooth $k$-form around $p$.

The complex $\A^{\bullet,\bullet}(M,\star D)$ is a bi-graded complex with natural differentials $\pa$ and $\bar \pa$. Moreover,
\begin{equation*}
    \A^{\bullet,\bullet}(M,\log D) \subset \A^{\bullet,\bullet}(M,\star D)
\end{equation*}
is a bi-graded subcomplex. The following results are the counterparts of Lemma 2.1 and Theorem 2.4 in \cite{LZ} respectively.

\begin{lemma}
Any $\omega \in \A^{n,\bullet}(M,\star D)$ can be written as
\begin{align*}
    \omega = \alpha + \pa \beta, \text{ where } \alpha \in \A^{p,\bullet}(M,\log D), \beta \in \A^{n-1,\bullet}(M,\star D).
\end{align*}
The support of $\alpha, \beta$ can be chosen to be contained in the support of $\omega$.
\end{lemma}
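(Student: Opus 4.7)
The plan is to follow Li-Zhou's proof of their Lemma 2.1 in \cite{LZ}, generalizing from Riemann surfaces to arbitrary dimension by inducting on the total pole order of $\omega$ along the components of $D$, after localizing via a partition of unity. The key identity used throughout is
\begin{equation*}
    \pa\!\left(\frac{\eta}{z_i^{m_i-1}}\right) = \frac{\pa\eta}{z_i^{m_i-1}} - (m_i-1)\,\frac{dz_i\wedge\eta}{z_i^{m_i}},
\end{equation*}
which, whenever a summand contains the factor $dz_i$ and $m_i\geq 2$, lets us trade that summand for a $\pa$-exact piece plus a term whose pole order along $z_i=0$ has dropped by one.

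I would first cover $M$ by a locally finite family of charts $\{U_\alpha\}$ in which $D$ takes the normal crossing form $\{z_1\cdots z_{l_\alpha}=0\}$, pick a subordinate partition of unity $\{\chi_\alpha\}$, and reduce by linearity to proving the statement for each $\chi_\alpha\omega$. On a fixed chart, expand $\chi_\alpha\omega = \sum_{I,J} f_{I,J}\,dz_I\wedge d\bar z_J/(z_1^{m_1}\cdots z_l^{m_l})$ with $f_{I,J}$ smooth. For every summand in which $I\ni i$ and $m_i\geq 2$, applying the identity yields the inductive step; iterating over all indices $i$ and all pole orders eventually reduces $\chi_\alpha\omega$ to a form with $m_i\leq 1$ for each $i$, that is, to a logarithmic form, modulo an explicit $\pa$-exact correction $\pa\beta_\alpha$. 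Summing back against the partition of unity produces the global decomposition.

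The main obstacle is that the identity does not apply directly to summands where some pole index $i$ with $m_i\geq 2$ is absent from the holomorphic multi-index $I$. In the applications of the lemma in the main text one has $p=n$, so $I=\{1,\dots,n\}$ always, every $dz_i$ is automatically present, and the difficulty disappears; this is precisely the case needed to justify the integration-by-parts step in Proposition \ref{vanishing of trace 2} and the definition of the regularized integral. For the general statement one has to run the same pole-reduction procedure on the ``bad'' summands together with a separate combinatorial argument that uses the freedom in $p\geq 1$ to redistribute the deficient multi-indices into pieces carrying the required $dz_i$ factors, while strictly decreasing the pole multi-index $(m_1,\dots,m_l)$ in a suitable lexicographic order; verifying termination of this bookkeeping will be the most delicate point. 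Finally, the support clause is automatic, since every operation is local and multiplies by smooth cutoffs or divides by powers of pole variables, neither of which enlarges support, and the partition-of-unity sum preserves containment in $\mathrm{supp}(\omega)$.
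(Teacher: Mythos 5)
Your overall strategy---localize by a partition of unity and lower pole orders with the identity $\pa\bigl(\eta/z_i^{m_i-1}\bigr)=\pa\eta/z_i^{m_i-1}-(m_i-1)\,dz_i\wedge\eta/z_i^{m_i}$---is exactly the Li--Zhou argument the paper has in mind: the paper gives no proof of this lemma at all, but simply cites Lemma 2.1 of \cite{LZ} and asserts that the higher-dimensional case presents no new difficulty. For the case that is actually used later (holomorphic degree $p=n$, as in Theorem \ref{Regularized integral}, Definition \ref{regularized integral} and the Stokes formulas), every polar summand contains all of $dz_1,\dots,dz_n$, your reduction terminates, and the support statement is indeed automatic; in that range your write-up is complete and correct.

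The step you defer, however---treating summands where some $m_i\geq 2$ but $dz_i$ is absent from the holomorphic multi-index by an unspecified ``combinatorial redistribution''---is a genuine gap, and in fact no such argument can exist: for $1\leq p<n$ the statement as written is false for the standard log Dolbeault complex $\A^{p,\bullet}(M,\log D)=\A^{0,\bullet}(M,\Omega_M^p(\log D))$. Take $n=2$, $D$ locally $\{z_1=0\}$, and $\omega=\chi\,dz_2/z_1^2$ with $\chi$ a bump function equal to $1$ near a point of $D$. Suppose $\omega=\alpha+\pa\beta$ near that point, with $\alpha=a\,\frac{dz_1}{z_1}+b\,dz_2$ ($a,b$ smooth) and $\beta=g/z_1^{k}$ ($g$ smooth; the cases $k\leq 1$ are immediately impossible). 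Comparing $dz_1$-coefficients gives $z_1\pa_{z_1}g-kg=-a\,z_1^{k}$, and applying $\pa_{z_1}^{m}\pa_{\bar z_1}^{n}$ with $m\leq k-1$ and restricting to $z_1=0$ forces all these jets of $g$ to vanish along $D$. Comparing $dz_2$-coefficients gives $\pa_{z_2}g=z_1^{k-2}-b\,z_1^{k}$; applying $\pa_{z_1}^{k-2}$ and restricting to $z_1=0$ then yields $(k-2)!=0$, a contradiction. So the ``bad'' summands are an actual obstruction, not bookkeeping: the lemma is only valid (and only needed) for forms whose polar terms carry the corresponding $dz_i$'s, e.g.\ $p=n$, and the correct fix is to restrict the statement accordingly rather than to attempt the redistribution you sketch.
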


\begin{theorem} \label{Regularized integral}
Let $\omega \in \A^{2n}(M,\star D)$. Then there exist $\alpha \in \A^{2n}(M,\log D)$ and $\beta \in \A^{n-1,n}(M,\star D)$ such that $\omega = \alpha + \pa \beta$. The integral $\int_M \alpha$ is absolutely convergent and the sum
\begin{equation*}
    \int_M \alpha + \int_{\pa M} \beta
\end{equation*}
does not depend on the choices of $\alpha$ and $\beta$.
\end{theorem}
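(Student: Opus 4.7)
The plan is to follow Li-Zhou \cite{LZ}, lifting their one-dimensional arguments to higher dimensions by repeated application of the preceding lemma.

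\emph{Existence and convergence.} Any $\omega \in \A^{2n}(M,\star D)$ necessarily has bidegree $(n,n)$ (no other bidegrees reach total degree $2n$), so the preceding lemma with $p=n$ produces the decomposition $\omega = \alpha + \pa \beta$ with the required bidegrees in one step. Absolute convergence of $\int_M \alpha$ is standard: locally near $D$, $\alpha$ is a smooth combination of $\bigwedge_{i \in I} \frac{dz_i}{z_i}$ wedged with $dz_j$'s and $d\bar z_k$'s, and in polar coordinates $\frac{dz_i \wedge d\bar z_i}{z_i}$ becomes $\pm 2i\, dr \wedge d\theta$, which is locally $L^1$.

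\emph{Independence.} Given two decompositions, set $\alpha := \alpha_1 - \alpha_2$ and $\beta := \beta_2 - \beta_1$; then $\alpha = \pa\beta$, and I must prove $\int_M \alpha = \int_{\pa M} \beta$. Because $\beta$ has maximal antiholomorphic bidegree $n$, a degree count forces $\bar\pa \beta = 0$, so $\pa\beta = d\beta$. Applying Stokes on $M_\epsilon := M \setminus U_\epsilon(D)$ yields
\begin{equation*}
    \int_{M_\epsilon} \alpha \;=\; \int_{\pa M} \beta \;-\; \int_{\pa U_\epsilon} \beta.
\end{equation*}
By dominated convergence (using the $L^1$-bound above), the left side converges to $\int_M \alpha$ as $\epsilon \to 0$, so everything reduces to the tube estimate $\int_{\pa U_\epsilon} \beta \to 0$.

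To control the tube integral, I would apply the preceding lemma once more to split $\beta = \beta_0 + \pa \xi$ with $\beta_0 \in \A^{n-1,n}(M, \log D)$ and $\xi \in \A^{n-2,n}(M,\star D)$ (valid for $n \geq 2$; the case $n=1$ is handled directly in \cite{LZ}). The same degree count gives $\bar\pa \xi = 0$, hence $\pa\xi = d\xi$, and Stokes on the closed manifold $\pa U_\epsilon$ kills the $\pa\xi$-contribution. The remaining estimate $\int_{\pa U_\epsilon} \beta_0 \to 0$ is the main technical step. In local coordinates near a smooth point of $D = \{z_1 = 0\}$, any $\beta_0 \in \A^{n-1,n}(M,\log D)$ can be written as
\begin{equation*}
    g_{(0)} \wedge d\bar z_1 \wedge \cdots \wedge d\bar z_n \;+\; \tfrac{dz_1}{z_1} \wedge g_{(1)} \wedge d\bar z_1 \wedge \cdots \wedge d\bar z_n;
\end{equation*}
parametrizing $\pa U_\epsilon$ by $z_1 = \epsilon e^{i\theta}$, both $\tfrac{dz_1}{z_1}$ and $d\bar z_1$ restrict to scalar multiples of $d\theta$, so the log term vanishes identically upon pullback while the smooth term inherits a prefactor $\epsilon$ from $d\bar z_1|_{\text{tube}} = -i\epsilon e^{-i\theta} d\theta$. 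Globalization across the normal-crossings strata of $D$ requires a partition-of-unity argument, where codimension-$k$ strata contribute tube boundaries whose pullback scales like $\epsilon^k$; this bookkeeping is the chief technical obstacle, but once it is in place the proof closes.
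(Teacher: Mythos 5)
Your proposal is correct, and it implements precisely the strategy the paper itself points to: the paper does not write out a proof of this theorem, but defers to Li--Zhou \cite{LZ} (their Lemma 2.1 and Theorem 2.4), remarking only that the argument ``relies on the vanishing of the limit of a contour integral.'' Your existence and convergence step (the bidegree of $\omega$ is forced to be $(n,n)$, so one application of the preceding lemma suffices, and log poles are locally $L^1$) and your Stokes-plus-dominated-convergence reduction of independence to the tube limit $\int_{\pa U_\eps}\beta \to 0$ are exactly the $1$-dimensional scheme of \cite{LZ} lifted to dimension $n$. Where you genuinely add something is in the tube limit itself for $n\geq 2$: a naive fiberwise transplant of the one-dimensional oscillation argument is obstructed by the fact that $\beta$ may have arbitrary-order poles along the \emph{other} components of $D$ that cross the tube, so uniform integrability is not automatic; your second application of the decomposition lemma, $\beta=\beta_0+\pa\xi$ with $\beta_0$ logarithmic and with $\pa\xi=d\xi$ (since $\bar\pa\xi=0$ by bidegree) integrating to zero over the closed tube by Stokes, cleanly reduces everything to the $O(\eps)$ estimate for log forms, which your local computation establishes. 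Two cosmetic corrections: in polar coordinates $\frac{dz\wedge d\bar z}{z}=-2i e^{-i\theta}\,dr\wedge d\theta$ (your formula drops the phase, but only boundedness matters); and the heuristic that codimension-$k$ strata contribute ``$\eps^k$'' is not the right bookkeeping --- the tube boundary pieces are always of real codimension one, each contributing $O(\eps)$ from the pullback of $d\bar z_i$, with at worst integrable $1/|z_j|$ singularities along the remaining divisor components coming from $\beta_0$ being logarithmic, and $O(\eps)\to 0$ is all you need. Provided you treat $\pa U_\eps$ as a piecewise-smooth closed cycle (corners at the crossings), so that the Stokes argument for $d\xi$ and for $M_\eps$ applies, your proof closes.
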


\begin{definition}\label{regularized integral}
We define the \emph{regularized integral}
\begin{equation*}
    \dashint_M\omega:=
    \begin{cases}
        0 & \text{if}\quad \omega \in \A^{<2n}(M, \star D)\,,\\
        \int_M\alpha+ \int_{\pa M} \beta & \text{if}\quad \omega=\alpha+\pa \beta\in \A^{2n}(M, \star D)\,.
    \end{cases}
\end{equation*}
Here $\alpha \in \A^{2n}(M,\log D), \beta \in \A^{n-1,n}(M,\star D)$.
\end{definition}

Parallel to \cite[Theorem 2.4]{LZ}, the proof of Theorem \ref{Regularized integral} relies on the vanishing of the limit of a contour integral.
A careful investigation of this contour integral leads to the following version of Poincar\'e residue map. Let $U$ be a small neighbourhood of $p \in D$ with coordinates $z_1,\cdots,z_n$ such that
\begin{equation*}
    D \cap U = \{z_1\cdots z_l = 0\}
\end{equation*}
for some $l \leq n$.
For any $\omega \in \A^{p,q}(M,\star D)$, we can write
\begin{equation}\label{eqn:higher_residue_definition_decomposition}
    \omega = \td \bar z_1 \wedge \alpha + \frac{\td z_1}{z_1^{m_1}} \wedge \beta + \gamma,
\end{equation}
where $\beta$ does not contain $\td \bar z_1$ and $\gamma$ does not contain $\td z_1$ or $\td \bar z_1$.
Furthermore, we can write
\begin{equation*}
    \beta = \sum_{J,K: 1 \notin J, 1 \notin K} h_{JK} \td z_J \wedge \td \bar z_K,
\end{equation*}
where $\td z_J = \wedge_{j\in J} dz_j, \td \bar z_K = \wedge_{k\in K} \td \bar z_k$.
Assume $D_1 \cap U = \{z_1 = 0\}$, then we can put
\begin{equation*}
    \Res_{D_1}(\omega) := \sum_{J,K: 1 \notin J, 1 \notin K} \frac{1}{(m_1-1)!} (\frac{\pa}{\pa z_1})^{m_1-1} h_{JK}|_{z_1=\bar z_1=0} \td z_J \wedge \td \bar z_K
\end{equation*}
on $D_1 \cap U$. It is straightforward to verify that $\Res_{D_1}(\omega)$ is globally defined and we thus get
\begin{equation*}
    \Res_{D_1}(\omega) \in \A^{p-1,q}(D_1, \star (D_1 \cap \cup_{j\neq 1} D_j)).
\end{equation*}
In the same way we can define $\Res_{D_i}(\omega), 2 \leq i \leq l$.
By definition, for any $\omega \in \A^{p,q}(M,\star D)$ we have
\begin{equation} \label{vanishing residue}
    \Res_{D_i}(\bar z_i \omega) = \Res_{D_i}(\td \bar z_i \wedge \omega) = 0.
\end{equation}

\begin{theorem} \label{Stokes formula 1}
Let $M$ be a compact complex manifold possibly with boundary $\pa M$. Let $\omega \in \A^{2n-1}(M, \star D)$. Then we have the following version of Stokes formula for the regularized integral
\begin{equation*}
    \dashint_M d \omega=-2\pi i \sum_{j=1}^l \dashint_{D_j} \Res_{D_j}(\omega)+\int_{\pa M}\omega\,.
\end{equation*}
\end{theorem}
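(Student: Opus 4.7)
My plan is to reduce the theorem to the classical Stokes formula for forms with only logarithmic singularities, by decomposing $\omega = \alpha + \partial\beta$ as in the preceding lemma and tracking the extra boundary contributions. Since integration is sensitive only to top bidegree on $\partial M$ and to bidegrees $(n,n)$ on $M$ and $(n-1,n-1)$ on $D_j$, I will split $\omega = \omega^{n,n-1} + \omega^{n-1,n}$ and treat each piece separately. The $(n-1,n)$ part is easy: writing $\omega^{n-1,n} = \alpha_2 + \partial\beta_2$, one has $d\omega^{n-1,n} = \partial\omega^{n-1,n} = \partial\alpha_2$, and $\Res_{D_j}(\alpha_2)$ has antiholomorphic degree $n$ on the $(n{-}1)$-dimensional manifold $D_j$ and thus vanishes; a simple comparison of $\int_{\partial M}\alpha_2$ and $\int_{\partial M}\omega^{n-1,n}$ using $\int_{\partial M}d\beta_2 = 0$ yields the formula for this piece, with zero residue contribution.

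The substantive case is $\omega^{n,n-1} = \alpha_1 + \partial\beta_1$ with $\alpha_1 \in \A^{n,n-1}(M,\log D)$ and $\beta_1 \in \A^{n-1,n-1}(M,\star D)$. Since $\partial\omega^{n,n-1}$ vanishes by bidegree, $d\omega^{n,n-1} = \bar\partial\omega^{n,n-1} = \bar\partial\alpha_1 - \partial\bar\partial\beta_1$, which is of the required form $(\text{log-pole part}) + \partial(\text{meromorphic})$ with the latter piece in $\A^{n-1,n}(M,\star D)$. By the definition of $\dashint$,
$$\dashint_M d\omega^{n,n-1} = \int_M \bar\partial\alpha_1 - \int_{\partial M}\bar\partial\beta_1.$$
I then invoke the logarithmic Stokes formula of \cite{LRW} (also Theorem 2.3 of \cite{LZ}): since $d\alpha_1 = \bar\partial\alpha_1$ by bidegree,
$$\int_M \bar\partial\alpha_1 = \int_{\partial M}\alpha_1 - 2\pi i\sum_{j=1}^l \int_{D_j}\Res_{D_j}(\alpha_1).$$
Substituting $\alpha_1 = \omega^{n,n-1} - \partial\beta_1$ and using that $\partial M$ is a closed real $(2n{-}1)$-manifold (so $\int_{\partial M} d\beta_1 = 0$, which combined with the vanishing of $\bar\partial\beta_1$ in bidegree $(n-1,n)$ restricted to $\partial M$ implies $\int_{\partial M}\partial\beta_1 + \int_{\partial M}\bar\partial\beta_1 = 0$), the two $\int_{\partial M}\bar\partial\beta_1$ contributions cancel and one obtains
$$\dashint_M d\omega^{n,n-1} = \int_{\partial M}\omega^{n,n-1} - 2\pi i\sum_{j=1}^l \int_{D_j}\Res_{D_j}(\alpha_1).$$

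The last step, which I expect to be the main obstacle, is to identify $\int_{D_j}\Res_{D_j}(\alpha_1)$ with $\dashint_{D_j}\Res_{D_j}(\omega^{n,n-1})$. Since $\Res_{D_j}(\alpha_1)$ has only logarithmic poles along the snc divisor $D_j\cap \bigcup_{k\neq j} D_k$, its regularized integral coincides with the ordinary integral, so the task is to show $\dashint_{D_j}\Res_{D_j}(\partial\beta_1) = 0$. The cleanest route is to establish the commutation relation $\Res_{D_j}(\partial\eta) = \partial\bigl(\Res_{D_j}(\eta)\bigr)$ on $D_j$ by a local coordinate computation using the decomposition $\eta = d\bar z_j\wedge(\cdots) + \frac{dz_j}{z_j^m}\wedge(\cdots) + (\cdots)$, after which $\Res_{D_j}(\partial\beta_1)$ is $\partial$-exact on the closed manifold $D_j$ and its regularized integral vanishes by definition. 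The bookkeeping in this local calculation—tracking how $(m{-}1)!^{-1}(\partial/\partial z_j)^{m-1}$ interacts with applying $\partial$ on $M$, especially when $\beta_1$ has mixed $z_j$- and $\bar z_j$-dependence—is the only delicate point; an alternative is to bypass commutativity entirely by evaluating both sides via the $\epsilon$-tube $\{|z_j|=\epsilon\}$ around $D_j$ and matching the $\epsilon\to 0$ expansions, which is the approach taken in \cite{LZ} in the one-dimensional case. Summing the contributions from the two bidegree pieces then gives the asserted identity.
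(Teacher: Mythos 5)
Your argument is essentially correct, but there is nothing in the paper to compare it against: the appendix explicitly omits all proofs, asserting that the results are a routine extension of Li--Zhou \cite{LZ} to higher dimensions. So your proposal supplies a proof where the paper gives none, and the route you take (split by bidegree, reduce the $(n,n-1)$ part to the logarithmic Stokes formula via the decomposition $\omega^{n,n-1}=\alpha_1+\pa\beta_1$, and absorb the correction through the behaviour of $\Res_{D_j}$ on $\pa$-exact forms) is the natural one; it is also consistent with the tools the paper itself leans on, since the log-pole Stokes formula of \cite{LRW} is already invoked in the proof of Lemma \ref{Vanishing of integral}. Two small points to fix. First, with the paper's convention \eqref{eqn:higher_residue_definition_decomposition} the residue \emph{anti}commutes with $\pa$: for $\eta=\frac{\td z_j}{z_j^{m}}\wedge b+\gamma+\td\bar z_j\wedge a$ one finds $\Res_{D_j}(\pa\eta)=-\pa\bigl(\Res_{D_j}(\eta)\bigr)$, because $\pa$ must be moved past the factor $\td z_j$; moreover your local check must also cover the $\gamma$-part (no $\td z_j$ but with $z_j$-poles), whose contribution to $\Res_{D_j}(\pa\eta)$ vanishes by the identity $\pa_{z_j}^{k}\bigl(z_j\pa_{z_j}s-ks\bigr)\big|_{z_j=0}=0$. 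Neither affects your conclusion, since all you need is that $\Res_{D_j}(\pa\beta_1)$ is $\pa$-exact with primitive in $\A^{n-2,n-1}(D_j,\star D')$, whence $\dashint_{D_j}\Res_{D_j}(\pa\beta_1)=0$ directly from Definition \ref{regularized integral} on the closed manifold $D_j$ (this is cleaner than, and makes unnecessary, the $\epsilon$-tube alternative you mention). Second, the logarithmic Stokes formula you quote, $\int_M\bar\pa\alpha_1=\int_{\pa M}\alpha_1-2\pi i\sum_j\int_{D_j}\Res_{D_j}(\alpha_1)$, is stated in \cite{LRW} for closed manifolds; with $D\cap\pa M=\emptyset$ the boundary version follows from the standard tube-cutting argument, but you should say so (or prove it) rather than cite it as given. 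Also, your parenthetical about ``the vanishing of $\bar\pa\beta_1$ restricted to $\pa M$'' is garbled; the correct statement is simply $\int_{\pa M}\pa\beta_1=-\int_{\pa M}\bar\pa\beta_1$ because $\int_{\pa M}\td\beta_1=0$, which is what your cancellation actually uses. Finally, the $(n-1,n)$ piece can be handled in one line: $\td\omega^{n-1,n}=\pa\omega^{n-1,n}$ with $\omega^{n-1,n}\in\A^{n-1,n}(M,\star D)$, so Definition \ref{regularized integral} gives $\dashint_M\td\omega^{n-1,n}=\int_{\pa M}\omega^{n-1,n}$ at once, with zero residue contribution for the bidegree reason you state.
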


If $M$ is a noncompact complex manifold without boundary, letting $\A_c^{\bullet,\bullet}(M,\star D) \subset \A^{\bullet,\bullet}(M,\star D)$ be the subspace of forms with compact support, we can then define the regularized integral of $\omega \in \A_c^{\bullet,\bullet}(M,\star D)$ on $M$.
In particular, we have the following Stokes formula.

\begin{theorem} \label{Stokes formula 2}
Let $M$ be a noncompact complex manifold and $\omega \in \A_c^{2n-1}(M, \star D)$. Then
\begin{equation*}
    \dashint_M d \omega=-2\pi i \sum_{j=1}^l \dashint_{D_j} \Res_{D_j}(\omega)\,.
\end{equation*}
\end{theorem}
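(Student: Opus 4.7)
The plan is to reduce Theorem \ref{Stokes formula 2} directly to Theorem \ref{Stokes formula 1} by cutting $M$ with a compact submanifold that contains the support of $\omega$ in its interior. Since $\omega \in \A_c^{2n-1}(M, \star D)$ has compact support, I can choose a compact complex submanifold $K \subset M$ with smooth boundary $\partial K$ such that $\mathrm{supp}(\omega) \subset K^\circ$ and $\partial K \cap D = \emptyset$. Such a $K$ exists because $\mathrm{supp}(\omega)$ is compact; one just thickens it slightly while keeping the (finitely many relevant) divisor components away from $\partial K$, using that the collection of divisors intersecting $\mathrm{supp}(\omega)$ is finite.

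With $K$ in hand, the simple normal crossing divisor $D \cap K$ does not meet $\partial K$, and $(K, D\cap K)$ satisfies the hypotheses of Theorem \ref{Stokes formula 1}. Moreover, by Definition \ref{regularized integral}, the regularized integral of any form supported in $K^\circ$ on $M$ agrees with the corresponding regularized integral on $K$: one can simply use the same decomposition $\omega = \alpha + \pat \beta$ with $\alpha \in \A^{2n}(M, \log D), \beta \in \A^{n-1,n}(M, \star D)$ restricted to $K$, and absolute convergence of $\int \alpha$ is inherited. The same remark applies to $\Res_{D_j}(\omega)$, which is compactly supported along $D_j$ and fits inside $D_j \cap K$.

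Applying Theorem \ref{Stokes formula 1} to $d\omega$ on $K$ therefore yields
\begin{equation*}
   \dashint_K d\omega = -2\pi i \sum_{j=1}^l \dashint_{D_j \cap K} \Res_{D_j}(\omega) + \int_{\partial K} \omega.
\end{equation*}
The boundary integral vanishes because $\partial K$ lies outside $\mathrm{supp}(\omega)$, so $\omega|_{\partial K} = 0$. Identifying $\dashint_K d\omega$ with $\dashint_M d\omega$ and $\dashint_{D_j \cap K} \Res_{D_j}(\omega)$ with $\dashint_{D_j} \Res_{D_j}(\omega)$ gives the desired formula.

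The only step requiring care — and the main technical obstacle — is producing the compact exhausting $K$ with $\partial K$ smooth and disjoint from $D$, and verifying that the regularized integral is genuinely local in the sense that restriction to $K$ does not alter its value. The locality follows from the construction in Theorem \ref{Regularized integral}: the decomposition $\omega = \alpha + \pat \beta$ can be chosen with support in $\mathrm{supp}(\omega)$, so extending by zero outside $K$ is compatible with both $\int_M \alpha$ and the boundary correction (which is zero on $\partial K$). Once this locality is stated cleanly, the rest of the proof reduces to a one-line application of Theorem \ref{Stokes formula 1}.
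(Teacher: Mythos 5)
Your overall strategy---reduce to Theorem \ref{Stokes formula 1} on a compact piece and kill the boundary term using compact support---is the natural and intended route; the paper itself omits the proof of this statement, remarking only that no essential difficulty arises beyond the compact case. However, there is a concrete gap in your construction of $K$: you require $\partial K \cap D = \emptyset$, and this cannot in general be arranged. If $D_j$ is an irreducible (hence connected) component of $D$ that meets $\mathrm{supp}(\omega)$ and is \emph{noncompact} as a subset of $M$, then for any compact $K$ with $D_j \cap K^\circ \neq \emptyset$ we have $D_j \not\subset K$, so by connectedness $D_j$ must meet $\partial K$. ``Thickening slightly'' does not help, because the obstruction is global along $D_j$, not local near $\mathrm{supp}(\omega)$. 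This is not a pathological corner case: in the application in Proposition \ref{vanishing of trace 2}, the relevant manifold is $X_i \cong \C^n$ and $D_i$ is a union of coordinate hyperplanes, all noncompact, so your hypothesis on $K$ fails exactly where the theorem is used.

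The repair is short but needs to be stated. Either (i) observe that the hypothesis that $D$ not meet $\partial M$ in Theorem \ref{Stokes formula 1} is only needed to make sense of the boundary integral $\int_{\partial M}\omega$ and of the contour limits near $\partial M$, and both issues are vacuous once $\omega$ and the decomposition $d\omega = \alpha + \pa\beta$ (chosen, as you note, with supports inside $\mathrm{supp}(\omega)$) vanish identically in a neighbourhood of $\partial K$; or (ii) bypass $K$ altogether: take the compactly supported decomposition $d\omega = \alpha + \pa\beta$, note that $\dashint_M d\omega = \int_M \alpha$ with no boundary correction since $\partial M = \emptyset$, and rerun the tubular-neighbourhood argument around the compact set $D \cap \mathrm{supp}(\omega)$, where the limiting contour integral produces exactly the residue terms. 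A minor additional slip: the decomposition you quote should be applied to the $2n$-form $d\omega$, not to $\omega$ itself. Your locality discussion is otherwise correct, and with either repair the argument closes.
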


\end{appendix}


\end{document}